\documentclass[journal,twoside,web]{ieeecolor}
\usepackage{generic}
\usepackage{cite}
\usepackage{amsmath,amssymb,amsfonts}
\usepackage{blkarray}
\usepackage{algorithmic}
\usepackage{graphicx}
\usepackage{algorithm,algorithmic}
\usepackage{hyperref}
\hypersetup{hidelinks=true}
\usepackage{textcomp}
\def\BibTeX{{\rm B\kern-.05em{\sc i\kern-.025em b}\kern-.08em
    T\kern-.1667em\lower.7ex\hbox{E}\kern-.125emX}}
\usepackage{epigraph}
\usepackage{subcaption} 

\usepackage{tikz}
\usetikzlibrary{decorations.pathmorphing}
\usetikzlibrary{positioning, shapes}
\tikzset{math3d/.style=
	{x= {(1cm,0cm)}, y={(0cm,1cm)},z={(-.5cm,-.7cm)}}}

\usepackage{tikz-cd}

\usepackage{amsthm}

\theoremstyle{plain}
\newtheorem{thm}{\textbf{Theorem}}

\newtheorem{prop}{\textbf{Proposition}}
\newtheorem{cor}{Corollary}

\newtheorem{defn}{\textbf{Definition}}
\newtheorem{assump}{\textbf{Assumption}}
\newtheorem{note}{\textbf{Note}}
\newtheorem{proper}{\textbf{Property}}

\newcommand{\scal}{\mathbf{\cdot}}

\begin{document}
\title{Existence of Solutions for Hybrid Dynamical Systems on Graph State-Space}
\author{Arthur Doliveira$^\star$, Christophe Roman$^\star$, Guillaume Graton$^{\star,\dagger}$ and  Mustapha Ouladsine$^\star$
\thanks{Manuscript received April ..., 2026... This project is supported by the French State, in the case of CASSIOPEE and ERMA project, as part of Investments for the Future Programme, now integrated into France 2030 with BPI France, and operated by ADEME.}
\thanks{$^{\star}$ A. Doliveira, C. Roman, G. Graton and M. Ouladsine are  with Aix-Marseille Université, Université de Toulon, CNRS, LIS (UMR 7020), Avenue Escadrille Normandie-Niemen, F-13397 Marseille Cedex 20, France
        ({\tt\small  arthur.doliveira@lis-lab.fr}, {\tt\small  christophe.roman@lis-lab.fr}, {\tt\small  mustapha.ouladsine@lis-lab.fr})}
\thanks{$^{\dagger}$ G. Graton is also with Centrale Méditerranée, Technopôle de Château-Gombert, 38 rue Frédéric Joliot-Curie, F-13451 Marseille Cedex 13, France
        ({\tt\small guillaume.graton@lis-lab.fr })}}

\maketitle

\begin{abstract}
This paper proposes a framework to ensure the existence of dynamical system trajectories in the state space of labeled, weighted, and attributed graphs. The evolution of such a system exhibits hybrid behavior: discrete jumps affecting the topology—the emergence and disappearance of vertices and edges over time—as well as vertex attributes and edge weights, combined with a continuous evolution of these same attributes and weights. To address the discrete behavior, an appropriate algebraic structure for the graph space is proposed; the analysis of its properties shows the existence of a new mathematical structure: a semi-vector space over the field of real numbers, whereas the literature only describes semi-vector spaces over semi-fields. The continuous behavior is modeled by differential equations. To facilitate formal treatment, the graph space is embedded, via a semi-linear mapping, into a new space, called variable-basis space, introduced in this work. The system's evolution model is then formulated within this space—the image of the graph space in the variable-basis space—under the framework of hybrid dynamical systems theory. A general result on the existence of solutions is established. Finally, this framework is applied to model and simulate the dynamics of the gut microbiota under antibiotic treatment followed by bacteriotherapy, where a generalized Lotka-Volterra (gLV) model describes the evolution of species abundances.
\end{abstract}

\begin{IEEEkeywords}
Graph State Space; Variable Basis State Space; Hybrid Dynamical Systems; Generalized Lotka–Volterra Model.
\end{IEEEkeywords}

\section{Introduction}
\label{sec:introduction}

\noindent The term \emph{“system"} is nearly ubiquitous, referring to both physical and abstract entities. At its core lies the notion of interconnectedness between constituent elements—an idea already present in the definition provided by Pontus de Tyard in 1552. The mathematical definition of this term has succeeded from Bertalanffy \cite{von1968general}, even from Mesarovic, up to Backlund \cite{doliveira2024modeling}. The latter defines a system as a set of elements and a set of relations satisfying two conditions: the existence of at least two elements, and the existence of a path connecting any two elements of the system \cite{backlund2000definition}. To embody this definition, Torres et al. \cite{torres2021and} identify three primary mathematical structures: graphs, simplicial complexes, and hypergraphs, with graphs probably being the best known and most
widely used.

Whether natural or technological, most systems evolve over time. The study of these systems falls within the theory of dynamical systems, whose modern form originates with  Poincaré \cite{aubin2002writing}. This theory provides the concepts necessary for modeling, which at a minimum requires the definition of the state space, the time domain, and the evolution law \cite{brown2018modern}. An important class of dynamical systems is described by the initial value Cauchy problem, whose general form is:
\begin{equation}
	\label{ModeGen}
	\left\{ \begin{array}{l}
		\dot{\mathbf{x}} (t)=f(t,\mathbf{x}(t)), \quad \text{ with }\mathbf{x} \in \mathcal{H}, \text{ and } t\in \mathcal{T};\\
		\mathbf{x}(0)=\mathbf{x}_0\quad \text{ with } \quad \mathbf{x}_0\in\mathcal{H},
	\end{array}\right.
\end{equation}
where $\mathcal{H}$, $\mathcal{T}$, $\dot{\mathbf{x}}$, and $\mathbf{x}_0$ denote, respectively, the state space, the time domain, the state derivative induced by the law $f$, and the initial condition. The purpose of description~\eqref{ModeGen} is to capture the infinitesimal variations of the system. However, for it to have a well-defined mathematical meaning, the problem must be well-posed, and at a minimum, admit a solution. When $\mathcal{H} = \mathbb{R}^n$, Peano’s theorem~\cite{reid1975anatomy}, Carathéodory’s conditions~\cite{filippov2013differential}, as well as the results of Filippov and, more generally, Krasovskij~\cite{hajek1979discontinuous}, guarantee the existence of solutions. When $\mathcal{H}$ is a Banach space of infinite dimension, there also exist hypotheses ensuring the existence of solutions, such as Lipschitz-type conditions, at least locally in time~\cite{reid1975anatomy}. However, if the state space is that of graphs — a space whose elements are sets of vertices with attributes and edges with weights — then the following question arises:
\begin{center}
	\emph{``Does description~\eqref{ModeGen} retain a well-defined mathematical meaning in this space, i.e., is it well-posed and, at a minimum, does it admit solutions?   If not, what reformulation would guarantee its mathematical consistency?''}
\end{center}

This question arises, indeed, because the space of graphs presents a dual interest. First, from a theoretical perspective, the graph perfectly embodies the definition of Backlund's system \cite{backlund2000definition}, as previously mentioned (see also subsection 2.1 of \cite{doliveira2024modeling}). From a practical standpoint, this space is particularly well-suited for modeling dynamical systems in which both topology (components and links) and quantitative variables evolve simultaneously, as illustrated in Fig.~\ref{GrapDynam}. The need for such a space is implicitly expressed in a biological example provided by Hirsch \cite{hirsch1984dynamical}:

\epigraph{\emph{``... there are grave difficulties in modelling a developing embryo: the number of cells change, new tissues and organs appear whose configurations suddenly become an important part of the state. When such problems cannot be ignored, we usually have no recourse except simply to switch attention to a different system---a gas instead of a liquid, an adult organism instead of an embryo. (There is an interesting analogy to the mathematical operation of changing coordinates---except that here we do not know how to describe the coordinate changes!)''}}{\textemdash\ Morris H. Hirsch, 1984~\cite{hirsch1984dynamical}}%

Hirsch \cite{hirsch1984dynamical} provides additional examples from ecology, physics, and economics, illustrating that a system’s topology can evolve. He distinguishes between two approaches to modeling such phenomena: a change of perspective (“a gas instead of a liquid”) or a change of coordinates. The former entails no topological modification as the approach is macroscopic, whereas the latter focuses on detail; it is this second approach that is adopted in this work. Hirsch notes that, to our knowledge, no operation is known to allow such a change of coordinates—that is, a change in topology. In general, he emphasizes that such problems are intrinsically linked to the nature of the state space \cite{hirsch1984dynamical}.

  \begin{figure}[h]
	\begin{center}
		\begin{subfigure}[b]{.38\textwidth}
			\centering

  \resizebox{1.1\textwidth}{!}{
  
    \begin{tikzpicture}[math3d]
    \pgfmathsetmacro{\r}{300}
    
    \def \T {9.}
    \def \rB {4.5}
    \def \B {2}

    \draw[->] (0,0,1.5)--(\T,0,1.5) node[pos=.99,below]{$t$};
   \draw[dashed] (0,0,0)--(0,3.25,0) -- (0,3.25,3); 
    \draw[dashed] (0,0,0)--(0,0,3) -- (0,3.25,3) ;
    
    \filldraw[black] (0,0,1.5) circle (1pt);

    \draw[dashed] (6,0,0)--(6,3.25,0) -- (6,3.25,3); 
    \draw[dashed] (6,0,0)--(6,0,3) -- (6,3.25,3) ;
    
    \filldraw[black] (2,0,1.5) circle (1pt);
    \filldraw[black] (4,0,1.5) circle (1pt);
    \filldraw[black] (6,0,1.5) circle (1pt);
    \filldraw[black] (8,0,1.5) circle (1pt);
    
    \node at (2,0,1.5) [below] {$t_1$};
    \node at (4,0,1.5) [below] {$t_2$};
    \node at (6,0,1.5) [below right] {$t_3$};
    \node at (8,0,1.5) [below] {$t_4$};

\foreach \t in {0,2,4,6,8}{
\path coordinate (a1) at  (\t,1,1);
\path coordinate (a2) at  (\t,2,2);
\path coordinate (a3) at  (\t,3,2);
\path coordinate (a4) at  (\t,1,2);
\path coordinate (a5) at  (\t,2,1);
\path coordinate (a6) at  (\t,1.5,2.15);

\draw (a1)--(a2)--(a3);

\ifnum \t = 0
  \node at (a1) [right]{ $\textcolor{blue}{i}$};
  \node at (a2) [left]{ $\textcolor{cyan}{j}$};
  \node at (a3) [above right]{ $\textcolor{red}{k}$};
  \node at (a4) [right]{ $\textcolor{olive}{l}$};
  \node at (a5) [right]{ $\textcolor{black}{m}$};
\fi

\ifnum \t = 6
  \node at (a6) [left]{ $\textcolor{orange}{n}$};
\fi

\ifnum \t > 2
\else
  \draw (a1)--(a3);
  \draw (a5)--(a3);
  \draw[fill=black] plot[domain=0:2,samples=100]({\t}, {.1*sin(\r*\x)+2}, {.1*cos(\r*\x)+1});
\fi

\ifnum \t > 5
\draw (a5)--(a3);
  \draw[fill=black] plot[domain=0:2,samples=100]({\t}, {.1*sin(\r*\x)+2}, {.1*cos(\r*\x)+1});
\fi

\ifnum \t > 1
\else
  \draw (a1)--(a5);
\fi

\ifnum \t > 4
 \draw (a2)--(a6);
  \draw[fill=orange] plot[domain=0:2,samples=100]({\t}, {.1*sin(\r*\x)+1.5}, {.1*cos(\r*\x)+2.15});
\fi

\ifnum \t > 6
\else
  \draw (a1)--(a4);
  \draw[fill=olive] plot[domain=0:2,samples=100]({\t}, {.1*sin(\r*\x)+1}, {.1*cos(\r*\x)+2});
\fi

\draw[fill=blue] plot[domain=0:2,samples=100]({\t}, {.1*sin(\r*\x)+1}, {.1*cos(\r*\x)+1});
\draw[fill=cyan] plot[domain=0:2,samples=100]({\t}, {.1*sin(\r*\x)+2}, {.1*cos(\r*\x)+2});
\draw[fill=red] plot[domain=0:2,samples=100]({\t}, {.1*sin(\r*\x)+3}, {.1*cos(\r*\x)+2});



}

\node at (0,0,1.5) [below right] {$0$};


  \end{tikzpicture}

  }
			\vspace{-.6 cm}
			\caption{\label{TopoEvolve}}
		\end{subfigure}
		\\
		\begin{subfigure}[b]{.38\textwidth}
			\centering


  \resizebox{1.1\textwidth}{!}{
  \begin{tikzpicture}
      
      \def \T {9.}
      \def \X {4.3}
      \def \dx {.8}
      \def \dy {1.1}
      
      \draw[->] (0,0)--(\T,0) node[below]{$t$};
      
      \draw[dashed] (0,0)--(0,-.5*\dy);
      \draw[dashed] (0,0)--(-\dx,0);
      
      \draw[->] (0,0)--(0,\X) node[below left] {$(\mathbb{R})$};

    \filldraw[black] (2,0) circle (1pt);
     \filldraw[black] (4,0) circle (1pt);
    \filldraw[black] (6,0) circle (1pt);
    \filldraw[black] (8,0) circle (1pt); 
    \node at (0,0) [below right] {$0$};
    \node at (2,0) [below] {$t_1$};
    \node at (4,0) [below] {$t_2$};
    \node at (6,0) [below] {$t_3$};
    \node at (8,0) [below] {$t_4$};

    \draw[thick, red] 
            (0,3.7)         .. controls +(.5,.5)      and   +(-1.3,.5)
         .. (4,3.7)          .. controls +(1.3,-.5)    and   +(-1.,.5)
         .. (8, 3.7) node[right]{$x_k$};
     
     \filldraw[red] (8, 3.7) circle (1pt);
     \filldraw[red] (0,3.7) circle (1pt);
     
      \draw[thick, black]
           (0,2.8) .. controls +(.3,.4)      and   +(-1.2,.4)
        .. (2.,3.3)node[below]{$x_m$};
        
     \filldraw[black] (0,2.8) circle (1pt);
     \filldraw[black] (2.,3.3) circle (1pt);
     
     \draw[thick, black]
           (6,3.1) .. controls +(1.,.3)      and   +(-.7,.5)
        .. (8,2.6)  node[right]{$x_m$};
        
     \filldraw[black] (6,3.1) circle (1pt);
     \filldraw[black] (8,2.6) circle (1pt);
     
     \draw[thick, orange]
           (6,2.4) .. controls +(1.,.3)      and   +(-.7,.5)
        .. (8,2.1)  node[right]{$x_n$};
      \filldraw[orange] (6,2.4) circle (1pt);
     \filldraw[orange] (8,2.1) circle (1pt);
     
      \draw[thick, blue]   (0, 2.3) -- (4, 2.3);
      \draw[thick, blue, dashed]   (4, 2.3) -- (4,1.9);
      \draw[thick, blue]  (4,1.9) -- (6, 1.9);
      \draw[thick, blue, dashed]  (6, 1.9) -- (6, 1.5);
      \draw[thick, blue] (6, 1.5) -- (8, 1.5) node[right] {\textcolor{blue}{$x_i$}};  
      
      \filldraw[blue] (0, 2.3) circle (1pt);
      \filldraw[blue] (4, 2.3) circle (1pt);
      \filldraw[blue] (4,1.9) circle (1pt);
      \filldraw[blue] (6, 1.9) circle (1pt);
      \filldraw[blue] (6, 1.5) circle (1pt);
      \filldraw[blue] (8, 1.5) circle (1pt);
     
    \draw[thick, dashed] 
            (0,1.)         .. controls +(1,.5)      and   +(-1.3,.5)
         .. (4,1.3)          .. controls +(1.3,-.5)    and   +(-1.,.5)
         .. (8, .7) node[right]{$w_{\textcolor{red}{k},\textcolor{cyan}{j}}$};
         
     \filldraw (0,1.) circle (1pt);
     \filldraw (8, .7) circle (1pt);
     
      \draw[thick, dashed]
           (0,.2) .. controls +(.3,.2)      and   +(-1.1,.4)
        .. (2,.7) node[right]{$w_{\textcolor{red}{k},\textcolor{blue}{i}}$};
        
     \filldraw[black] (0,.2) circle (1pt);
     \filldraw[black] (2,.7) circle (1pt);
  \end{tikzpicture}
  }
			\vspace{-.6 cm}
			\caption{\label{AttributeEvolve}}
		\end{subfigure}
	\end{center}
	\vspace{-.2 cm}
	\caption{\label{GrapDynam}Illustration of the evolution of the state of a system represented by a graph in $\mathbb{G}$. Part (a) shows the evolution of the topology, while part (b) describes changes in the attributes of certain vertices and the weights of specific edges.}
\end{figure} 

The global behavior of such a system is illustrated in Fig.~\ref{GrapDynam}, where, in addition to the evolving topology (system components and the connections between them), the component attributes and the interaction weights can evolve simultaneously. By treating the topological evolution, as well as any discrete jumps in attributes and weights, as discrete events  and considering the continuous dynamics of these same attributes and weights, the system's evolution can be formulated within the framework of hybrid dynamical systems \cite{goebel2012hybrid}. To address the discrete behavior, we introduce a graph state space endowed with an appropriate algebraic structure. The analysis of this structure's properties reveals the existence, to the best of our knowledge, of a novel mathematical structure: a semi-vector space over the field of real numbers, thereby addressing a gap in the literature where only semi-vector spaces over semi-fields have been studied \cite{la2024semi,janyvska2007semi,la2021semi}. In parallel, the continuous dynamics of the attributes and weights are modeled by differential equations. To facilitate formal treatment, the graph space is embedded, via a semi-linear mapping, into a new space called the variable-basis space. In this representation, the evolution of attributes takes the form illustrated in Fig.~\ref{AttribDynam}; furthermore, this space provides the framework within which the existence of solutions is established.

To illustrate the relevance of this new framework, we apply it to the dynamics of the gut microbiota, where the evolution of species abundances is governed by the generalized Lotka-Volterra (gLV) model, specifically within the context of antibiotic treatment followed by bacteriotherapy \cite{jones2020navigation}. The formulation of the model within the graph state space, and more specifically its image in the variable-basis space, addresses a major challenge in microbiota modeling (see Section \ref{MicrobioteApp}). We now return to the central question of this paper by examining the relevant literature.

\subsection{\label{Litterature}Selected References}

\noindent The work of  Erd\H{o}s and  Rényi \cite{erdHos1960evolution} is, to our knowledge, the first study devoted to evolving graphs. They analyzed changes in graph properties as a function of structural parameters, such as the number of edges, rather than over time. From a temporal perspective, subsequent studies have addressed topological inference \cite{lebre2010statistical,coutino2020state}, identification of invariant properties \cite{zecevic2010control}, and learning on temporal graphs \cite{zambon2023graph,li2024state}. In this article, we focus on modeling the evolution of a system represented by a graph. The literature generally distinguishes four types of dynamic graph evolution \cite{harary1997dynamic}: (i) discrete topology evolution; (ii) discrete evolution of topology, attributes, and weights; (iii) continuous evolution of attributes and weights; and (iv) discrete topology evolution combined with continuous evolution of attributes and weights.

For the first form of evolution, Harary and Gupta \cite{harary1997dynamic} model a dynamic graph as a discrete sequence of static graphs governed by a Markov chain with transitions determined by a probability matrix. The second form is exemplified by Mazloum et al. \cite{mazloum2025implementation}, who extended classical algebraic operations \cite{Harary1969,knauer2019algebraic}, initially defined on topology, to account for discrete changes in vertex attributes and edge weights. While this model has been applied to vessel dynamics, the algebraic properties of these operations have not been studied \cite{mazloum2025implementation}, so closure of the space is not guaranteed. 

The third type of evolution, which concerns only changes in vertex attributes and/or edge weights, can be illustrated by works on collective dynamics, such as those by Baumann et al. \cite{baumann2020periodic} and Zecevic and \v{S}iljak \cite{zecevic2010control,vsiljak2008dynamic}. The evolution law of these systems can be expressed in the general form of the Cauchy problem \eqref{ModeGen}. Since the topology remains constant, the attributes and/or weights are grouped into vectors representing the system's state. This formulation allows the description of infinitesimal changes in this state. Under these conditions, there exist assumptions ensuring the existence of solutions, as discussed previously.

Finally, the fourth type of evolution is illustrated by Despréaux and Maignan \cite{despreaux2009dynamical}, who model an agent-based network. Although not explicitly stated, the model falls within the framework of hybrid dynamical systems. The state space is defined with continuous dynamics given by a future-state generator and discrete dynamics governed by local transformation rules, specified algorithmically without an explicit algebraic structure. While suitable for modeling such evolution, the approach has limitations. First, it is difficult to reuse when the continuous dynamics are described by a differential equation, as no existence result is guaranteed. Second, the lack of an algebraic structure for the transformation rules prevents properties such as closure of the state space. Finally, the model does not account for external perturbations. Although it alternates between continuous and discrete dynamics, it is not formalized within a rigorous hybrid systems framework.

\subsection{Contributions \& Organization}

\noindent This paper proposes a framework for ensuring the existence of trajectories of dynamical systems in the graph state space. It captures the simultaneous evolution of vertex attributes and edge weights through both infinitesimal variations and discrete jumps, as well as discrete topological changes (i.e., the emergence or disappearance of vertices, edges, or subgraphs). The main contributions are:

\begin{enumerate}
	\item The development of a rigorous algebraic structure for the graph space that captures the simultaneous discrete evolution of topology, attributes, and weights (Fig.~\ref{GrapDynam}).By associating the internal composition laws of Mazloum et al.~\cite{mazloum2025implementation} with a multiplicative external composition law, and by formally establishing the resulting algebraic properties, this work identifies—to the best of our knowledge—a new mathematical structure: semi-vector spaces over fields—a structure previously undocumented in the literature, which only considers semi-vector spaces over semifields~\cite{la2024semi,janyvska2007semi,la2021semi};
	
	\item The introduction of a variable-base state-space, into which the graph space is embedded via a semi-linear mapping. This construction is developed to ensure analytical tractability and to enable the derivation of formal mathematical results. Due to the isomorphic nature of the semi-linear mapping between the graph space and its image in the variable-base space, the image inherits the algebraic properties of the graph space, providing a consistent framework for the system's analysis;
	
	\item The formulation of the evolution model of the system within the variable-basis space, grounded in the theory of hybrid dynamical systems. Within this framework, a general result guaranteeing the existence of solutions is established;
	
	\item The application of the proposed framework to the simulation of gut microbiota dynamics, particularly to capture the complex evolution of species under antibiotic treatment and bacteriotherapy. This approach enables a reformulation of the generalized Lotka–Volterra (gLV) model, adapting it to contexts where the system structure is subject to topological changes, thereby overcoming the ecological challenges raised by Hirsch \cite{hirsch1984dynamical}.
\end{enumerate}

The remainder of the paper is organized as follows. Section \ref{Descript} presents the graph space and its algebraic structure. Section \ref{VBS_Frame} introduces the variable-base space, into which the graph space is embedded via a semi-linear mapping. Section \ref{HybDynModel} describes the model within the framework of hybrid dynamics. Section \ref{MicrobioteApp} illustrates the application of the model to gut microbiota dynamics. Finally, Section \ref{Conc} presents the conclusions.

\section{\label{Descript}Graph State-Space}
\noindent This section introduces the state space of graphs, equipped with an algebraic structure induced by a discrete evolution—illustrated in Fig. \ref{GrapDynam}—that acts simultaneously on the topology, the vertex attributes, and the edge weights. We subsequently define the constituent elements of this set.

\subsection{\label{ElemGraphTheory}Elements of the Set of Graphs}
\noindent Let \(\mathcal{I} \subseteq \mathbb{N}\) be a countable set of labels, referred to as the universe, whose elements are arbitrary. We denote by \( 2^{\mathcal{I}} \) the power set of \( \mathcal{I} \), defined such that for any \( I \in 2^{\mathcal{I}} \), \( I \subseteq \mathcal{I} \). Both the empty set \( \varnothing \) and the set \( \mathcal{I} \) itself are elements of \( 2^{\mathcal{I}} \). Let \( \mathbb{V} = \{(i, x_i)\mid i \in \mathcal{I}\} \) be a set of points indexed by \( \mathcal{I} \), where \( x_i \in \mathbb{R} \) is an attribute associated with the point labeled \( i \).

\begin{defn}
	\label{GraphDef} 
	A labeled, weighted graph $G$ with attributes, built on $\mathbb{V}$, consists of a pair $(V_G, E_G)$, where $V_G \subseteq \mathbb{V}$ and $E_G$ are the sets of vertices and edges of $G$, respectively, such that: 
	\[
	V_G = \{(i, x_i) \mid i \in \mathcal{I}_G, \; x_i \in \mathbb{R}\}, \]
	\[
	E_G = \{(p, q, w_{p,q}) \mid (p, q) \in \mathcal{L}_G, \; w_{p,q} \in \mathbb{R}\}.
	\]

	Here, $\mathcal{I}_G \subseteq \mathcal{I}$ and $\mathcal{L}_G \subseteq \mathcal{I}_G \times \mathcal{I}_G$ are the sets of labels for vertices and edges, respectively. $x_i$ represents the attribute of vertex $i$, and $w_{p,q}$ the weight of edge $(p,q)$.
\end{defn}

Hereafter, any mention of a graph refers to a labeled, weighted graph with attributes, unless stated otherwise. Let  \(\mathbb{G}\) denote the set of all loopless graphs on \(\mathbb{V}\). Extending our results to graphs with loops is straightforward and will be discussed when relevant. We now introduce several specific types of graphs constructed on \(\mathbb{V}\).
\begin{defn}
	\label{EmptyGraphDef}
	The empty graph, denoted \(\emptyset_{\mathbb{G}} = (\varnothing,\varnothing)\), is a graph in \(\mathbb{G}\) with no vertices and no edges.
\end{defn}

\begin{defn}
	\label{zeroGraph}
	A zero graph \( G^0 = (V_{G^0}, E_{G^0}) \in \mathbb{G}\) is a graph such that \( V_{G^0} = \{(i, 0) \mid i \in \mathcal{I}_{G^0} \} \) and \( E_{G^0} = \{(p, q, 0) \mid (p,q) \in \mathcal{L}_{G^0} \} \). 
\end{defn}

\begin{defn} 
	\label{totalGraph}
	A total graph \(G^\text{t} = (V_{G^\text{t}}, E_{G^\text{t}}) \in \mathbb{G}\) is a graph with \(V_{G^\text{t}} = \mathbb{V}\).
\end{defn}

\begin{defn}
	\label{completeGraph}
	A complete graph \(G^c\) in \(\mathbb{G}\) is a graph in which every pair of vertices \(p\) and \(q\) from the vertex set \(V_{G^c}\) is connected by an edge \((p,q)\) in the edge set \(E_{G^c}\).
\end{defn}

\begin{defn}%
	\label{loopsGraph}
	A graph \(G^l\) in \(\mathbb{G}\) has loops if it contains at least one edge connecting a vertex to itself.
\end{defn}

\begin{note}
	In this paper, we distinguish a zero edge weight from the actual existence of the edge.  
	Indeed, for a graph \(G = (V_G, E_G)\) with 
	\(E_G = \{(p, q, w_{p,q}) \mid (p, q) \in \mathcal{L}_G\}\), 
	contrary to Šiljak \cite{vsiljak2008dynamic}, a weight \(w_{p,q} = 0\) does not necessarily imply that the edge \((p,q)\) is absent from \(E_G\) for any \((p,q) \in \mathcal{L}_G\).
\end{note}

For any graph \(G \in \mathbb{G}\), there exists an adjacency matrix  \(\mathbf{A}_G = (a_{p,q})_{p,q \in \mathcal{I}_G}\)  and a weight matrix  \(\mathbf{W}_G = (w_{p,q})_{p,q \in \mathcal{I}_G}\),  where \(\mathcal{I}_G\) denotes the set of indices of the vertices of \(G\).   A value \(a_{p,q} = 1\) indicates the existence of the edge \((p,q)\),  whereas \(a_{p,q} = 0\) indicates its absence.   Furthermore, if the adjacency matrix \(\mathbf{A}_G\) is symmetric,  \(G\) is considered undirected; otherwise, \(G\) is directed.

\subsection{\label{AlgOpGraph}Algebraic Operations on the Set of Graphs}
\noindent The two internal composition laws used in \cite{mazloum2025implementation}, namely the overlapping union and the overlapping intersection, are considered, and an external composition law is introduced.

\subsubsection{\label{AddUnion}Overlapping (Additive) Union}
The additive union (overlapping union in \cite{mazloum2025implementation}) combines set-theoretic union on the topology with addition on vertex attributes and edge weights sharing the same labels (Fig.~\ref{AddUnionIllust}). Its formal definition follows.

\begin{defn}
	\label{AddUnionDef}
	The additive union is a mapping ~  \(\cup_+:\mathbb{G}\times\mathbb{G}\to\mathbb{G}\) that assigns to two graphs \(X = (V_X, E_X)\) and \(Y = (V_Y, E_Y)\) the graph \(X \cup_+ Y = (V_X \cup_+ V_Y, E_X \cup_+ E_Y)\), where:
	
	\vspace{-1. mm}
	
	\begin{itemize}
		\item From the vertex sets \(V_X = \{(i, x_i)\}_{i \in \mathcal{I}_X}\) and \(V_Y = \{(i, y_i)\}_{i \in \mathcal{I}_Y}\), the resulting vertex set is:
		\vspace{-1. mm}
		\[
		\begin{split}
			V_X \cup_+ V_Y := &\left\{ (i, x_i + y_i) \mid i \in \mathcal{I}_X \cap \mathcal{I}_Y \right\}\\
			&\cup  \left\{ (i, x_i) \mid i \in \mathcal{I}_X \setminus (\mathcal{I}_X\cap\mathcal{I}_Y) \right\}\\
			&\cup \left\{ (i, y_i) \mid i \in \mathcal{I}_Y \setminus (\mathcal{I}_Y\cap\mathcal{I}_X) \right\};
		\end{split}
		\]
		\vspace{-1. mm}
		\item From the edge sets \(E_X = \left\{ (k,l, w_{k,l}) \mid (k,l) \in \mathcal{L}_X \right\}\) and \(E_Y = \left\{ (k,l, v_{k,l}) \mid (k,l) \in \mathcal{L}_Y \right\}\), the resulting edge set is:
		\vspace{-1. mm}
		\[
		\begin{split}
			E_X \cup_+ E_Y := &\left\{ (k,l, w_{k,l} + v_{k,l}) \mid (k,l) \in \mathcal{L}_X \cap \mathcal{L}_Y \right\} \\ 
			&\cup \left\{ (k,l, w_{k,l}) \mid (k,l) \in \mathcal{L}_X \setminus (\mathcal{L}_X \cap \mathcal{L}_Y) \right\} \\
			&\cup \left\{ (k,l, v_{k,l}) \mid (k,l) \in \mathcal{L}_Y \setminus (\mathcal{L}_Y \cap \mathcal{L}_X) \right\}.
		\end{split}
		\]
	\end{itemize} 
\end{defn}

Note that the cardinalities of \(V_X\) and \(E_X\) satisfy  \(|V_X| = |\mathcal{I}_X|\) and \(|E_X| = |\mathcal{L}_X|\),  and similarly for \(Y\). Thus, $|V_X \cup_+ V_Y| = |\mathcal{I}_X| + |\mathcal{I}_Y| - |\mathcal{I}_X \cap \mathcal{I}_Y|$ and $|E_X \cup_+ E_Y| = |\mathcal{L}_X| + |\mathcal{L}_Y| - |\mathcal{L}_X \cap \mathcal{L}_Y|$.

\begin{figure}[h]
	\centering
	\begin{subfigure}[b]{.36\textwidth}
		\centering
		  \resizebox{.82\textwidth}{!}{
  
   \begin{tikzpicture}
    
      \def \simx {4. cm}
     \def \simy {1.3 cm }
     \def \dh { 1.3 cm }
     
     \def \dm {3. cm }
     
     \def \refx {0 cm }
     \def \refy {0 cm }
     
     \draw[rounded corners=10pt, thick]  (-1.2*\simx+\refx,\refy-1.6*\dh)  rectangle (.8*\simx+\refx,\refy+\simy+.7*\dh) ;
     
     \coordinate (center1) at (\refx,\refy);
     
     \small
     
     \node  (vi) at (-\simx+\refx,\refy)            [style={thick, circle,draw=black!100, inner sep=0pt, minimum size=8.5 mm}] {$i,x_i$};
     \node  (vj) at (-\simx+\dh+\refx,\refy-\simy)  [style={thick,circle,draw=cyan!100, inner sep=0pt, minimum size=8.5 mm}] {$j,x_j$};
     \node  (vk) at (-\simx+\dh+\refx,\refy+\simy)  [style={thick,circle,draw=red!100, inner sep=0pt, minimum size=8.5 mm}] {$k,x_k$};
     
     \draw[thick] (vk) -- (vi) -- (vj);
     
     \node (signe1+) at (-\simx+.5*\dm+.5*\dh+\refx, \refy) {\Large $\cup_+$};
     
     \node  (vjf) at (-\simx+\dm+\refx,\refy-\simy)       [style={thick,circle,draw=cyan!100, inner sep=0pt, minimum size=8.5 mm}] {$j,0$};
     \node  (vkf) at (-\simx+\dm+\refx,\refy+\simy)       [style={thick,circle,draw=red!100, inner sep=0pt, minimum size=8.5 mm}] {$k,0$};
     
     \draw[thick] (vkf) -- (vjf);
     
     \node (signe1=) at (center1) {\Large $=$};
     
     \node  (vis) at (.3*\simx+\refx,\refy)            [style={thick,circle,draw=black!100, inner sep=0pt, minimum size=8.5 mm}] {$i,x_i$};
     \node  (vjs) at (.3*\simx+\dh+\refx,\refy-\simy)  [style={thick,circle,draw=black!100, inner sep=0pt, minimum size=8.5 mm}] {$j,x_j$};
     \node  (vks) at (.3*\simx+\dh+\refx,\refy+\simy)  [style={thick,circle,draw=black!100, inner sep=0pt, minimum size=8.5 mm}] {$k,x_k$};
     
     \draw[thick] (vks) -- (vis) -- (vjs) --(vks);
     
  \end{tikzpicture}
  
  }
		\vspace{-.2 cm}
		\caption{\label{AddLink}}
		\vspace{.2 cm}
	\end{subfigure}
	\begin{subfigure}[b]{.36\textwidth}
		\centering
		  \resizebox{.82\textwidth}{!}{
  
   \begin{tikzpicture}
     \def \simx {4. cm}
     \def \simy {1.3 cm }
     \def \dh { 1. cm }
     
     \def \dm {3. cm }
     
     \def \refx {0 cm }
     \def \refy {0 cm }
     
     \draw[rounded corners=10pt, thick]  (-1.2*\simx+\refx,\refy-2.3*\dh)  rectangle (.95*\simx+\refx,\refy+\simy+.9*\dh) ;
     
     \coordinate (center1) at (\refx,\refy);
     
     \node  (v1) at (-\simx+\refx,\refy+\simy)  [style={thick, circle,draw=black!100, inner sep=0pt, minimum size=8.5 mm}] {$i,x_i$};
     \node  (v2) at (-\simx+\refx,\refy-\simy)  [style={thick, circle,draw=cyan!100, inner sep=0pt, minimum size=8.5 mm}] {$j,x_j$};
     
     \draw[thick] (v1) --(v2);

     \node (signe1+) at (-\simx+.5*\dm+\refx, \refy) {\Large $\cup_+$};
     
     \node  (v2f) at (-\simx+\dm+\refx,\refy-\simy)       [style={thick, circle,draw=cyan!100, inner sep=0pt, minimum size=8.5 mm}] {$j,0$};
     \node  (v3) at (-\simx+\dm+\refx,\refy+\simy)       [style={thick, circle,draw=black!100, inner sep=0pt, minimum size=8.5 mm}] {$k,x_k$};
     
     \draw[thick] (v3) -- (v2f);
     
     \node (signe1=) at (center1) {\Large $=$};
     
     \node  (v2s) at (\dh+\refx,\refy-\simy)  [style={thick, circle,draw=black!100, inner sep=0pt, minimum size=8.5 mm}] {$j,x_j$};
     \node  (v1s) at (\dh+\refx,\refy+\simy)  [style={thick, circle,draw=black!100, inner sep=0pt, minimum size=8.5 mm}] {$i,x_i$};
     
     \node  (v3s) at (\dh+.7*\dm+\refx,\refy+\simy)  [style={thick, circle,draw=black!100, inner sep=0pt, minimum size=8.5 mm}] {$k,x_k$};
     
     \draw[thick] (v1s) -- (v2s) -- (v3s);
  \end{tikzpicture}
  
  }
		\vspace{-.2 cm}
		\caption{\label{AddNode}}
		\vspace{.2 cm}
	\end{subfigure}
	\begin{subfigure}[b]{.36\textwidth}
		\centering
		  \resizebox{1.17\textwidth}{!}{
  
   \begin{tikzpicture}
     \def \simx {6.5 cm}
     \def \simy {1.5 cm }
     \def \dh { 1. cm }
     
     \def \dm {4. cm }
     
     \def \refx {0 cm }
     \def \refy {0 cm }
     
      \draw[rounded corners=10pt, thick]  (-\simx -\dh + \refx,\refy-\dh-\simy)  rectangle (.14*\simx+\refx+4*\dh,\refy +\dh +\simy) ;
     
     \coordinate (center1) at (\refx,\refy);
     
     \node  (v2) at (-\simx+\refx,\refy)            [style={thick, circle,draw=red!100, inner sep=0pt, minimum size=8.5 mm}] {$k,x_k$};
     \node  (v3) at (-\simx+\dh+\refx,\refy-\simy)  [style={thick, circle,draw=black!100, inner sep=0pt, minimum size=8.5 mm}] {$j,x_j$};
     \node  (v1) at (-\simx+\dh+\refx,\refy+\simy)  [style={thick, circle,draw=cyan!100, inner sep=0pt, minimum size=8.5 mm}] {$i,x_i$};
     
     \node  (e12) at (-\simx+ .5*\dh +\refx,\refy +.5*\dh) [above left]{$w_{i,k}$};  
     
     \draw[thick, blue] (v1) -- (v2);
     \draw[thick] (v2) -- (v3);
     
     \node (signe1+) at (-\simx+.5*\dm+.5*\dh+\refx, \refy) {\Large $\cup_+$};
     
     \node  (v2f) at (-\simx+\dm+\refx,\refy)          [style={thick, circle,draw=red!100, inner sep=0pt, minimum size=8.5 mm}] {$k,y_k$};
     \node  (v1f) at (-\simx+\dm+\refx,\refy+\simy)    [style={thick, circle,draw=cyan!100, inner sep=0pt, minimum size=8.5 mm}] {$i, y_i$};
     \node  (v4f) at (-\simx+\dm+1.5*\dh+\refx,\refy)  [style={thick, circle,draw=black!100, inner sep=0pt, minimum size=8.5 mm}] {$q,y_q$};
     
     \node  (v5f) at (-\simx+\dm+1.5*\dh+\refx,\refy+\simy)  [style={thick, circle,draw=black!100, inner sep=0pt, minimum size=8.5 mm}] {$p,y_p$};
     
     \node  (g12) at (-\simx+\dm+\refx,\refy+.5*\dh) [above left]{$\omega_{i,k}$}; 
     
     \draw[thick, blue] (v1f) -- (v2f);
     \draw[thick] (v2f) -- (v4f)--(v5f);
     
     \node (signe1=) at (center1) {\Large $=$};
     
     \node  (v2s) at (.14*\simx+\refx,\refy)            [style={thick, circle,draw=red!100, inner sep=0pt, minimum size=8.5 mm}] {$k,\textcolor{red}{z_k}$};
     \node  (v3s) at (.14*\simx+\dh+\refx,\refy-\simy)  [style={thick, circle,draw=black!100, inner sep=0pt, minimum size=8.5 mm}] {$j,x_j$};
     \node  (v1s) at (.14*\simx+\dh+\refx,\refy+\simy)  [style={thick, circle,draw=cyan!100, inner sep=0pt, minimum size=8.5 mm}] {$i,\textcolor{cyan}{z_i}$};
     \node  (v4s) at (.14*\simx+\refx+1.5*\dh,\refy)            [style={thick, circle,draw=black!100, inner sep=0pt, minimum size=8.5 mm}] {$q,y_q$};
     \node  (v5s) at (.14*\simx+\refx+3*\dh,\refy)            [style={thick, circle,draw=black!100, inner sep=0pt, minimum size=8.5 mm}] {$p,y_p$};
     
     \draw[thick, blue] (v1s) -- (v2s);
     \draw[thick, black]  (v2s) --(v3s);
     \draw[thick] (v2s)--(v4s)--(v5s);
     
     \node  (h) at (.14*\simx+\refx+.5*\dh,\refy+.5*\dh)  [above left]{$\textcolor{blue}{v_{i,k}}$};
     
     \large
     \node  (w) at (-\simx+\dm+\dh+\refx,\refy-.9*\dh){$\textcolor{cyan}{z_i} = x_i+y_i$}; 
     \node  (z) at (-\simx+\dm+\dh+\refx,\refy-1.4*\dh){$\textcolor{red}{z_k} = x_k+y_k$};
     \node  (lh) at (-\simx+\dm+1.15*\dh+\refx,\refy-2.*\dh){$\textcolor{blue}{v_{i,k}} = w_{i,k}+\omega_{i,k}$}; 
  \end{tikzpicture}
  
  }
  
  
		\vspace{-.2 cm}
		\caption{\label{AddSubGraph}}
	\end{subfigure}
	\caption{\label{AddUnionIllust}Illustration of additive union to add (a) an edge, (b) a vertex and (c) a subgraph.}
\end{figure} 

\subsubsection{\label{AddInter}Overlapping (Additive) Intersection}
Like the additive union, the additive intersection (overlapping intersection in \cite{mazloum2025implementation}) combines set-theoretic intersection with addition (Fig.~\ref{AddInterIllust}). Its formal definition follows.

\begin{defn}
	\label{AddInterDef}
	The additive intersection is a map \(\cap_+: \mathbb{G} \times \mathbb{G} \to \mathbb{G}\) that assigns to two graphs \(X = (V_X, E_X)\) and \(Y = (V_Y, E_Y)\) the graph \(X \cap_+ Y = (V_X \cap_+ V_Y, E_X \cap_+ E_Y)\), where:
	\vspace{-1. mm}
	\begin{itemize}
		\item Given the vertex sets \(V_X = \{(i, x_i)\mid i \in \mathcal{I}_X\}\) and \(V_Y = \{(i, y_i)\mid i \in \mathcal{I}_Y\}\), the resulting vertex set is
		\vspace{-1.3 mm}
		\[
		V_X \cap_+ V_Y= \left\{(i, x_i + y_i) \mid i \in \mathcal{I}_X \cap \mathcal{I}_Y\right\}.
		\]
		\vspace{-3. mm}
		\item For  \(E_X = \left\{(k, l, w_{k,l}) \mid  (k, l) \in \mathcal{L}_X\right\}\) and \(E_Y = \left\{(k, l, v_{k,l}) \mid (k, l) \in \mathcal{L}_Y\right\}\), the resulting edge set is
		\vspace{-1.5 mm}
		\[
		E_X \cap_+ E_Y = \left\{(k, l, w_{k,l} + v_{k,l}) \mid (k, l) \in \mathcal{L}_X \cap \mathcal{L}_Y\right\}.
		\]
	\end{itemize}
\end{defn}

The cardinalities of the vertex and edge sets of \(X \cap_+ Y\) are, respectively: \(\lvert V_X \cap_+ V_Y \rvert = \lvert \mathcal{I}_X \cap \mathcal{I}_Y \rvert\) and \(\lvert E_X \cap_+ E_Y \rvert = \lvert \mathcal{L}_X \cap \mathcal{L}_Y \rvert\).

\begin{figure}[h]
  \centering
  \resizebox{.41\textwidth}{!}{
  
  \begin{tikzpicture}
     \def \simx {6.5 cm}
     \def \simy {1.5 cm }
     \def \dh { 1. cm }
     
     \def \dm {4. cm }
     
     \def \refx {0 cm }
     \def \refy {0 cm }
     
      \draw[rounded corners=10pt, thick]  (-\simx -\dh + \refx,\refy-\dh-\simy)  rectangle (.14*\simx+\refx+2*\dh,\refy +\dh +\simy) ;
     
     \coordinate (center1) at (\refx,\refy);
     
     \node  (v2) at (-\simx+\refx,\refy)            [style={thick, circle,draw=red!100, inner sep=0pt, minimum size=8.5 mm}] {$k,x_k$};
     \node  (v3) at (-\simx+\dh+\refx,\refy-\simy)  [style={thick, circle,draw=black!100, inner sep=0pt, minimum size=8.5 mm}] {$j,x_j$};
     \node  (v1) at (-\simx+\dh+\refx,\refy+\simy)  [style={thick, circle,draw=cyan!100, inner sep=0pt, minimum size=8.5 mm}] {$i,x_i$};
     
     \node  (e12) at (-\simx+ .5*\dh +\refx,\refy +.5*\dh) [above left]{$w_{i,k}$};  
     
     \draw[thick, blue] (v1) -- (v2);
     \draw[thick] (v2) -- (v3);
     
     \node (signe1+) at (-\simx+.5*\dm+.5*\dh+\refx, \refy) {\Large $\cap_+$};
     
     \node  (v2f) at (-\simx+\dm+\refx,\refy)          [style={thick, circle,draw=red!100, inner sep=0pt, minimum size=8.5 mm}] {$k,y_k$};
     \node  (v1f) at (-\simx+\dm+\refx,\refy+\simy)    [style={thick, circle,draw=cyan!100, inner sep=0pt, minimum size=8.5 mm}] {$i, y_i$};
     \node  (v4f) at (-\simx+\dm+1.5*\dh+\refx,\refy)  [style={thick, circle,draw=black!100, inner sep=0pt, minimum size=8.5 mm}] {$q,y_q$};
     
     \node  (v5f) at (-\simx+\dm+1.5*\dh+\refx,\refy+\simy)  [style={thick, circle,draw=black!100, inner sep=0pt, minimum size=8.5 mm}] {$p,y_p$};
     
     \node  (g12) at (-\simx+\dm+\refx,\refy+.5*\dh) [above left]{$\omega_{i,k}$}; 
     
     \draw[thick, blue] (v1f) -- (v2f);
     \draw[thick] (v2f) -- (v4f)--(v5f);
     
     \node (signe1=) at (center1) {\Large $=$};
     
     \node  (v3s) at (.14*\simx+\dh+\refx,\refy-\simy)  [style={thick, circle,draw=red!100, inner sep=0pt, minimum size=8.5 mm}] {$k,\textcolor{red}{z_k}$};
     \node  (v1s) at (.14*\simx+\dh+\refx,\refy+\simy)  [style={thick, circle,draw=cyan!100, inner sep=0pt, minimum size=8.5 mm}] {$i,\textcolor{cyan}{z_i}$};
     
    
    \draw[thick, blue] (v3s) -- (v1s);
     
     \node  (h) at (.14*\simx+\refx+.9*\dh,\refy-.2*\dh)  [above left]{$\textcolor{blue}{v_{i,k}}$};
     
     \large
     \node  (w) at (-\simx+\dm+\dh+\refx,\refy-.9*\dh){$\textcolor{cyan}{z_i} = x_i+y_i$}; 
     \node  (z) at (-\simx+\dm+\dh+\refx,\refy-1.4*\dh){$\textcolor{red}{z_k} = x_k+y_k$};
     \node  (lh) at (-\simx+\dm+1.15*\dh+\refx,\refy-2.*\dh){$\textcolor{blue}{v_{i,k}} = w_{i,k}+\omega_{i,k}$}; 
  \end{tikzpicture}
  
  }
  
  \caption{\label{AddInterIllust}Illustration of the additive intersection}
\end{figure}

\subsubsection{\label{MultiExterLaw}External Multiplicative Composition Law}
Unlike the internal composition laws \(\cup_+\) and \(\cap_+\), the multiplicative external composition law applies only to attributes and weights. Its formal definition follows.
\begin{defn}
	\label{ActionLaw}
	Let \(\scal : \mathbb{R} \times \mathbb{G} \to \mathbb{G}\) be a multiplicative external composition law. 
	For any \(\alpha \in \mathbb{R}\) and any graph \(G = (V_G, E_G)\), it is defined by
	\[
	\alpha \scal G := (\alpha V_G, \alpha E_G), \text{ where}
	\]
	
	\[
	\alpha V_G \!=\! \{(i, \alpha x_i)\}_{i \in \mathcal{I}_G} \text{ and } \alpha E_G \!=\! \{(p, q, \alpha w_{p,q})\}_{(p, q) \in \mathcal{L}_G}.
	\]
	
	 For all \(\alpha, \beta \in \mathbb{R}\), \(G,H \in \mathbb{G}\), and for each \(\star \in \{\cup_+, \cap_+\}\), the law \(\scal\) satisfies:
	
	\begin{enumerate}
		\item Distributivity with respect to \(\star\):
		\begin{itemize}
			\item[--] $\alpha \scal (G \star H) = (\alpha \scal G) \star (\alpha \scal H)$,
			\item[--] $(\alpha + \beta) \scal G = (\alpha \scal G) \star (\beta \scal G)$;
		\end{itemize}
		\item Mixed Associativity: $(\alpha \beta) \scal G = \alpha \scal (\beta \scal G)$
		
		\item Multiplicative Identity: $1 \scal G = G$
	\end{enumerate}
\end{defn}

\subsection{\label{MainResult}Main Results}

\noindent  The main results, stated in theorems~\ref{GraphSetStructureU+} and~\ref{GraphSetStructuren+},  follow directly from the algebraic results established in subsections~\ref{DemTheo1}  and~\ref{DemTheo2}. Let \( \mathbb{R}_0^{+} := \{ x \in \mathbb{R} \mid x \ge 0 \} \).  Endowed with addition \( + \) and multiplication \( \scal \),  the structure \( (\mathbb{R}_0^{+}, +, \scal) \) is a semi-field  (see definitions~1 and~2 in~\cite{la2024semi}).  We now introduce the following definition.

\begin{defn}(Adapted from \cite{janyvska2007semi})
	\label{semi-vector-space}
	A semi-vector space over the semi-field $\mathbb{R}_0^+$ is a set $\mathcal{X}$ equipped with two operations: $\star: \mathcal{X} \times \mathcal{X} \mapsto \mathcal{X}$ and $\scal: \mathbb{R}_0^+ \times \mathcal{X} \mapsto \mathcal{X}$, which satisfy the following properties for all $\alpha, \beta \in \mathbb{R}_0^+$ and $x, y, z \in \mathcal{X}$:
	
	\begin{enumerate}
		\item  Associativity: $x \star (y \star z) = (x \star y) \star z$; 
		\item  Commutativity: $x \star y = y \star x$; 
		\item  Associativity of scalar multiplication: 
		\[
		(\alpha  \beta) \scal x = \alpha \scal (\beta \scal x);
		\]
		\item  Distributivity of scalar multiplication:
		 \[
		 \alpha \scal (x \star y) = \alpha \scal x \star \alpha \scal y;
		 \]
		\item  Distributivity of scalars: $(\alpha + \beta) \scal x = \alpha \scal x \star \beta \scal x$;
		\item  Identity element for scalar multiplication: $1 \scal x = x$.
	\end{enumerate}
	
	  In particular:
	\begin{itemize}
		\item[--]  A semi-vector space \(\mathcal{X}\) with an identity element \(e\) for \(\star\), meaning that \(x \star e = x\) for all \(x \in \mathcal{X}\), is said to be complete;
		\item[--] A complete semi-vector space \(\mathcal{X}\) with no invertible elements, is said to be simple;
		\item[--] A semi-vector space \(\mathcal{X}\) with an additive cancellation law, \textit{i.e.}, \( x \star y = z \star y \) implies \( x = z \), is said to be regular.
	\end{itemize}
\end{defn}

In order to fully apprehend the following theorem, one should keep in mind definitions \ref{EmptyGraphDef}, \ref{AddUnionDef}, \ref{ActionLaw} and \ref{semi-vector-space}.

\begin{thm}
	\label{GraphSetStructureU+}
	The triple \( (\mathbb{G}, \cup_+, \scal) \) forms a complete and simple semi-vector space  over the semi-field \( \mathbb{R}_0^+ \), whose identity element is the empty graph  \( \emptyset_{\mathbb{G}} \).
\end{thm}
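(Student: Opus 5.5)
The plan is to reduce every axiom of Definition~\ref{semi-vector-space} to label-wise statements about real numbers, using the explicit form of $\cup_+$ in Definition~\ref{AddUnionDef}. For graphs $X,Y$, a label $i\in\mathcal{I}_X\cup\mathcal{I}_Y$ carries in $X\cup_+Y$ the attribute $\tilde x_i+\tilde y_i$. Here $\tilde x_i$ equals $x_i$ if $i\in\mathcal{I}_X$, and it is treated as a formal \emph{absent} symbol otherwise. The same convention applies to edge labels in $\mathcal{L}_X\cup\mathcal{L}_Y$. Equivalently, $\cup_+$ is set union on the label sets, combined with real addition on the overlap and copying elsewhere. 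So for each axiom I will check two things: equality of the label sets, which is a set-theoretic identity, and equality of the value at each label, which is an identity in $\mathbb{R}$ obtained by case analysis on which operands contain the label. Closure in $\mathbb{G}$ must be checked once. Since $\mathcal{L}_X\subseteq\mathcal{I}_X\times\mathcal{I}_X$ and $\mathcal{L}_Y\subseteq \mathcal{I}_Y\times\mathcal{I}_Y$, the union $\mathcal{L}_X\cup\mathcal{L}_Y$ lies in $(\mathcal{I}_X\cup\mathcal{I}_Y)^2$, and no loops are created. Scalar multiplication preserves all label sets, so it trivially stays in $\mathbb{G}$.

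The key steps, in order, are as follows.
\begin{enumerate}
\item \textbf{Commutativity.} This follows from the commutativity of $\cup$, of $\cap$, and of real addition, together with the symmetry of the three-piece formula.
\item \textbf{Associativity.} Both $X\cup_+(Y\cup_+Z)$ and $(X\cup_+Y)\cup_+Z$ have label set $\mathcal{I}_X\cup\mathcal{I}_Y\cup\mathcal{I}_Z$. At a label $i$, both sides equal the sum of the values of those operands containing $i$. This requires a case split over the seven nonempty membership patterns, which collapses by the associativity of $+$. The edge sets are handled identically.
\item \textbf{Scalar axioms.} Mixed associativity and $1\scal G=G$ are immediate label-wise. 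Distributivity $\alpha\scal(X\cup_+Y)=(\alpha\scal X)\cup_+(\alpha\scal Y)$ holds because scaling fixes label sets and $\alpha(x+y)=\alpha x+\alpha y$. Distributivity of scalars, $(\alpha+\beta)\scal G=(\alpha\scal G)\cup_+(\beta\scal G)$, holds because both operands have the same label set $\mathcal{I}_G$ and $\mathcal{L}_G$. Only the overlap clause is therefore active, and it gives $(\alpha+\beta)x_i$.
\end{enumerate}

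Completeness and simplicity come next. $G\cup_+\emptyset_{\mathbb{G}}=G$ because all intersections with $\varnothing$ are empty, so only the copy-from-$G$ clause survives. For simplicity, I must show that no graph other than the identity has an inverse. Suppose $G\cup_+H=\emptyset_{\mathbb{G}}$. Then $|\mathcal{I}_G|+|\mathcal{I}_H|-|\mathcal{I}_G\cap\mathcal{I}_H|=|\mathcal{I}_G\cup\mathcal{I}_H|=0$, which forces $\mathcal{I}_G=\mathcal{I}_H=\varnothing$. Hence $G=H=\emptyset_{\mathbb{G}}$, and the only invertible element is the identity itself, which is the standard meaning of having no invertible elements. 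Note that the scalars here are restricted to $\mathbb{R}_0^+$, as the definition requires, although the formulas work for all of $\mathbb{R}$.

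The main obstacle I expect is the associativity case analysis. There is also some bookkeeping to make sure the edge-label argument is genuinely independent of the vertex argument apart from the closure check. I would handle this cleanly by encoding each graph as a pair of partial functions $\mathcal{I}\rightharpoonup\mathbb{R}$ and $\mathcal{I}^2\rightharpoonup\mathbb{R}$, and $\cup_+$ as the pointwise extension of $+$ to $\mathbb{R}\cup\{\bot\}$ with $\bot$ neutral. The structure $(\mathbb{R}\cup\{\bot\},+)$ is a commutative monoid, and pointwise operations inherit associativity and commutativity at once. The finitely many cases are then absorbed into this single observation.
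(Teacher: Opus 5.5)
Your proposal is correct and follows essentially the same route as the paper: an axiom-by-axiom check of Definition~\ref{semi-vector-space}, with commutativity and associativity inherited from set union and real addition, $\emptyset_{\mathbb{G}}$ as the identity, and the same cardinality argument on label sets showing that only $\emptyset_{\mathbb{G}}$ is invertible. Your partial-function encoding with a neutral ``absent'' symbol makes the paper's one-line ``inherits'' claim precise, and you also verify closure and the scalar axioms from the explicit formula, whereas the paper takes these as given in Definition~\ref{ActionLaw}.
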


\begin{note}
	\label{Semi-vector-Note}  
	In the literature, a semi-vector space is always defined over a semi-field  \cite{la2024semi,janyvska2007semi,la2021semi}. It is the semi-field that imparts its semi-vectorial structure to the vector space. However, a semi-vector space is more general than a traditional vector space, as it relaxes certain conditions, such as the existence of an identity element, the existence of inverses, and the property of additive cancellation \cite{janyvska2007semi}. In Theorem \ref{GraphSetStructureU+}, we established that \((\mathbb{G},\cup_+)\) forms a complete and simple semi-vector space over the semi-ring \(\mathbb{R}_0^+\). Since the multiplication induced by the semi-field \(\mathbb{R}_0^+\) applies to both attributes and weights, which have values in \(\mathbb{R}\) (see Definition \ref{ActionLaw}), the resulting value remains in \(\mathbb{R}\). Thus, whether we consider the semi-field \(\mathbb{R}_0^+\) or the field \(\mathbb{R}\), the properties of \((\mathbb{G},\cup_+)\) remain unchanged. In particular, it retains the closure property: for all \(G, H \in \mathbb{G}\) and \(\alpha \in \mathbb{R}\), \((\alpha \scal G) \cup_+ H \in \mathbb{G}\). Thus, \((\mathbb{G},\cup_+)\) can be regarded as a semi-vector space over both the semi-field \(\mathbb{R}_0^+\) and the field \(\mathbb{R}\).  
\end{note}
In view of Note~\ref{Semi-vector-Note}, Theorem~\ref{GraphSetStructureU+} can be extended to show that  \( (\mathbb{G}, \cup_+, \scal) \) is a complete and simple semi-vector space over the field  \( \mathbb{R} \). For the following theorem, we refer to  definitions~\ref{AddInterDef}, \ref{ActionLaw}, and~\ref{semi-vector-space},  Note~\ref{Semi-vector-Note}, as well as definitions~\ref{zeroGraph},  \ref{totalGraph}, and~\ref{completeGraph}, concerning the identity element.

\begin{thm}
	\label{GraphSetStructuren+}
	The triple \( (\mathbb{G}, \cap_+, \scal) \) forms a complete and simple semi-vector space over  the field \( \mathbb{R} \), whose identity element is the total, complete, and zero graph  \( \mathcal{G}_{t,c}^0 \).
\end{thm}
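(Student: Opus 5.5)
The plan is to mirror the proof of Theorem~\ref{GraphSetStructureU+}, swapping the set-theoretic union for intersection. I will verify the axioms of Definition~\ref{semi-vector-space} one label at a time. A graph $X$ is encoded by its label sets $(\mathcal{I}_X,\mathcal{L}_X)$ together with the real values $x_i$, $w_{k,l}$ attached to those labels. By Definition~\ref{AddInterDef}, $X\cap_+ Y$ has label sets $(\mathcal{I}_X\cap\mathcal{I}_Y,\ \mathcal{L}_X\cap\mathcal{L}_Y)$, and on each surviving label it carries the sum of the two values. Every axiom therefore splits into a topological identity about $\cap$ on $2^{\mathcal{I}}$ and $2^{\mathcal{I}\times\mathcal{I}}$, plus a pointwise identity in $(\mathbb{R},+,\cdot)$.

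\textbf{Axioms 1--6.} Associativity and commutativity of $\cap_+$ follow because $\cap$ is associative and commutative and $+$ is associative and commutative on $\mathbb{R}$. For the scalar axioms, $\scal$ leaves label sets unchanged (Definition~\ref{ActionLaw}), so only values need checking. On a common label,
\begin{itemize}
\item[--] $\alpha(x_i+y_i)=\alpha x_i+\alpha y_i$ gives $\alpha\scal(G\cap_+H)=(\alpha\scal G)\cap_+(\alpha\scal H)$;
\item[--] $(\alpha+\beta)x_i=\alpha x_i+\beta x_i$, together with $\mathcal{I}_G\cap\mathcal{I}_G=\mathcal{I}_G$, gives scalar distributivity;
\item[--] mixed associativity and $1\scal G=G$ are immediate.
\end{itemize}
None of these uses the sign of $\alpha$, so they hold over $\mathbb{R}$ exactly as in Note~\ref{Semi-vector-Note}. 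Closure is also clear: $\mathcal{L}_X\cap\mathcal{L}_Y\subseteq(\mathcal{I}_X\cap\mathcal{I}_Y)^2$, and no loop is created.

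\textbf{Completeness.} Let $e=\mathcal{G}^0_{t,c}$, with $\mathcal{I}_e=\mathcal{I}$, $\mathcal{L}_e$ all loopless pairs of $\mathcal{I}$, and every value $0$. Since $\mathcal{L}_X\subseteq\mathcal{L}_e$ for every loopless $X$, we get $\mathcal{I}_X\cap\mathcal{I}=\mathcal{I}_X$ and $\mathcal{L}_X\cap\mathcal{L}_e=\mathcal{L}_X$, and adding $0$ preserves values. Hence $X\cap_+e=X$. Uniqueness follows from the usual argument: if $e'$ is another identity, then $e=e\cap_+e'=e'$.

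\textbf{Simplicity.} Suppose $X\cap_+Y=e$. First I use the topology. The inclusions $\mathcal{I}=\mathcal{I}_X\cap\mathcal{I}_Y\subseteq\mathcal{I}_X$ and $\mathcal{L}_e\subseteq\mathcal{L}_X\cap\mathcal{L}_Y$ force both $X$ and $Y$ to be total and complete. Then I use the values: they must satisfy $x_i+y_i=0$ and $w_{k,l}+v_{k,l}=0$ for every label.

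To conclude that $X=Y=e$, I intend to argue within the semi-field setting of Definition~\ref{semi-vector-space}, where the cone of values is $\mathbb{R}_0^+$. There $x_i+y_i=0$ with $x_i,y_i\ge0$ forces $x_i=y_i=0$, so the identity is the only invertible element. I would then transfer this to $\mathbb{R}$ through Note~\ref{Semi-vector-Note}.

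\textbf{Main obstacle.} The transfer to $\mathbb{R}$ is where I expect the argument to break. For a total complete $X$ with arbitrary real values, $(-1)\scal X$ satisfies $X\cap_+((-1)\scal X)=e$, so the nonnegativity argument does not carry over verbatim. To establish ``no invertible element other than $e$'' as worded, I would need to justify that the value cone is $\mathbb{R}_0^+$ when invertibility is assessed. If that cannot be justified, the conclusion must be weakened to ``only total complete graphs are invertible''.
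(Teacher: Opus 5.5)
Your verification of axioms 1--6, closure, and of $\mathcal{G}^0_{t,c}$ as the unique identity is correct. It follows the paper's route (Propositions~\ref{PropAddInter} and~\ref{IdAddInter}). Checking the scalar axioms labelwise actually does more than the paper, which simply postulates them in Definition~\ref{ActionLaw}.

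The gap is the simplicity step, and it cannot be closed, because that part of the statement is false under Definition~\ref{semi-vector-space}. Take any total complete $X$ with values $x_i, w_{k,l}$, and let $X'$ be the total complete graph carrying $-x_i, -w_{k,l}$. Then $X'$ belongs to $\mathbb{G}$ and satisfies $X\cap_+X' = X'\cap_+X = \mathcal{G}^0_{t,c}$. So every total complete graph with some nonzero value is a non-identity invertible element.

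Your proposed rescue through $\mathbb{R}_0^+$ conflates scalars with values. The semi-field restricts only the external law, whereas Definition~\ref{GraphDef} lets attributes and weights range over all of $\mathbb{R}$. Invertibility under $\cap_+$ involves no scalar at all: $X'$ exists whether or not you may write it as $(-1)\scal X$. The counterexample therefore survives unchanged over $\mathbb{R}_0^+$. Simplicity could be recovered, for instance, by restricting to graphs with nonnegative values. That set is closed under $\scal$ only for nonnegative scalars, however, so it would no longer be a semi-vector space over the field $\mathbb{R}$.

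Your weakened conclusion is exactly what the paper itself establishes. Proposition~\ref{SymInter+} shows that the invertible elements are precisely the total complete graphs $\mathcal{G}_{t,c}$, with the inverse given by $g_i=-x_i$. The paper then declares the structure ``simple'' anyway, which does not follow. So you have not missed an idea that the paper supplies. The defensible statement is that $(\mathbb{G},\cap_+,\scal)$ is a complete semi-vector space over $\mathbb{R}$ with identity $\mathcal{G}^0_{t,c}$, whose invertible elements are exactly the total complete graphs; in particular, it is not simple.
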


\begin{note}
	\label{IdWithLoops}
	If the set of graphs \( \mathbb{G} \) includes graphs with loops (cf. Definition~\ref{loopsGraph}),  the identity element becomes the total, complete, zero graph with loops, denoted  \( \mathcal{G}_{t,c}^{l,0} \), where \( l \) indicates the presence of loops.  By completeness, every vertex carries a loop;  by being zero, the weights of these loops are all zero.
\end{note}

The algebraic structure \((\mathbb{G}, \cup_+, \emptyset_{\mathbb{G}}, \cap_+, \mathcal{G}_{t,c}^0, \scal)\) over the field \(\mathbb{R}\) provides a precise framework for modeling the discrete changes occurring in the system, as illustrated in Fig.~\ref{GrapDynam}.  In the following, the graph space, denoted \(\mathbb{G}\), will refer to this algebraic structure over the field \(\mathbb{R}\). It is characterized by the fact that \((\mathbb{G}, \cup_+,\emptyset_{\mathbb{G}},\scal)\) and \((\mathbb{G}, \cap_+,  \mathcal{G}_{t,c}^0,\scal)\) are simple semi-vector spaces over  \(\mathbb{R}\). From a given state in \(\mathbb{G}\), it allows the determination of all other states of the system by using the properties of the internal composition laws \(\cup_+\) and \(\cap_+\) (see subsections \ref{DemTheo1} and \ref{DemTheo2}), as well as the multiplicative law \(\scal\). If \(\mathbb{G}\) includes graphs with loops, the structure becomes \((\mathbb{G}, \cup_+, \emptyset_{\mathbb{G}}, \cap_+, \mathcal{G}_{t,c}^{l,0},\scal)\) over the field \(\mathbb{R}\) (see Note \ref{IdWithLoops}). The next step is to establish the behavior of the additive union \(\cup_+\) in relation to the additive  intersection \(\cap_+\), or vice versa.

\begin{proper}
	\label{U+Etn+}  
	The algebraic structure \((\mathbb{G}, \cup_+, \cap_+)\) satisfies the following properties:  
	
	\vspace{-.5 mm}
	
	\begin{itemize}  
		\item Non-distributivity: For all \(X, Y, Z \in \mathbb{G}\),  
		\vspace{-.5 mm}
		\[
		\text{(1)} \hspace*{10 mm} X \cap_+ (Y \cup_+ Z) \neq (X \cap_+ Y) \cup_+ (X \cap_+ Z),
		\]
		\[
		\text{(2)} \hspace*{10 mm} X \cup_+ (Y \cap_+ Z) \neq (X \cup_+ Y) \cap_+ (X \cup_+ Z).
		\]  
		\vspace{-2.5mm}
		
		\item Order of operations: For all \(X, Y, Z \in \mathbb{G}\),  
		\vspace{-.5 mm}
		\[
		\text{(1)} \hspace*{10 mm} (X \cap_+ Y) \cup_+ Z \neq X \cap_+ (Y \cup_+ Z),
		\]
		\[
		\text{(2)} \hspace*{10 mm} (X \cup_+ Y) \cap_+ Z \neq X \cup_+ (Y \cap_+ Z).
		\]  
		\vspace{-2.5mm}
		
		\item Relation between \(\cup_+\) and \(\cap_+\): For all \(X, Y \in \mathbb{G}\),  
		\vspace{-.5 mm}
		\[
		X \cup_+ Y = (X \cap_+ Y) \cup (X\setminus (X\cap_+ Y)) \cup (Y\setminus (Y\cap_+ X)).
		\]  
	\end{itemize}
	The operation \(\setminus\) represents the graph difference operation. More precisely, let \(\mathcal{I}_X\), \(\mathcal{L}_X\), \(\mathcal{I}_Y\), and \(\mathcal{L}_Y\) denote the sets of indices for the vertices and edges of graphs \(X\) and \(Y\), respectively. The graph difference is then defined as  $X \setminus Y = (V^X_Y ,E^X_Y)$, where $V^X_Y = \{(i, x_i) \mid i \in \mathcal{I}_X \setminus (\mathcal{I}_X\cap \mathcal{I}_Y) \}$ and $E^X_Y = \{(k, l, w_{k,l}) \mid (k, l) \in \mathcal{L}_X \setminus (\mathcal{L}_X\cap\mathcal{L}_Y) \}.$ 
\end{proper}
Since \( \cup_+ \) and \( \cap_+ \) are not distributive with respect to each other (see Property \ref{U+Etn+}), the structure \( (\mathbb{G}, \cup_+, \cap_+) \) does not constitute a semi-algebra \cite{la2021semi}.

\subsection{\label{DemTheo1}Proof of Theorem \ref{GraphSetStructureU+}}
\noindent The approach consists in verifying whether the conditions of Definition~\ref{semi-vector-space} are satisfied. The conditions related to the multiplicative law $\scal$ hold by definition (cf. Definition~\ref{ActionLaw}), whereas those related to $\cup_+$ are stated as propositions, with proofs provided in Appendix~\ref{ProofPropTheoU+}.

\begin{prop}
	\label{PropAddUnion}
	The additive union \( \cup_+ \) on \( \mathbb{G} \) satisfies the following properties for all \( X, Y, Z \in \mathbb{G} \): 
	\begin{enumerate}
		\item  Commutativity: \( X \cup_+ Y = Y \cup_+ X \); 
		\item   Associativity:  \( (X \cup_+ Y) \cup_+ Z = X \cup_+ (Y \cup_+ Z) \),
		\item  Non-additive cancellation, \( X \cup_+ Y = Z \cup_+ Y \) does not imply \( X = Z \).
	\end{enumerate}
	 
\end{prop}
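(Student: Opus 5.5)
The plan is to reduce everything to a labelwise description of graphs. I encode each graph $X\in\mathbb{G}$ by two partial functions: $x:\mathcal{I}_X\to\mathbb{R}$ for vertex attributes and $w:\mathcal{L}_X\to\mathbb{R}$ for edge weights. With this encoding, Definition~\ref{AddUnionDef} says that $X\cup_+Y$ has label sets $\mathcal{I}_X\cup\mathcal{I}_Y$ and $\mathcal{L}_X\cup\mathcal{L}_Y$. Its value at a label is the sum of the values of the operands that contain that label. Here I use the convention that a label absent from an operand contributes nothing, rather than the value $0$, so that membership and value are tracked separately.

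The first step is to check that $X\cup_+Y$ lies in $\mathbb{G}$. Since $\mathcal{L}_X\subseteq\mathcal{I}_X\times\mathcal{I}_X$ and $\mathcal{L}_Y\subseteq\mathcal{I}_Y\times\mathcal{I}_Y$, the union $\mathcal{L}_X\cup\mathcal{L}_Y$ lies in $(\mathcal{I}_X\cup\mathcal{I}_Y)^2$. Neither edge set contains loops, so the union contains none either.

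For \textbf{commutativity}, the label sets of $X\cup_+Y$ and $Y\cup_+X$ coincide because set union is commutative. The three cases in Definition~\ref{AddUnionDef} (label in both operands, only in $X$, only in $Y$) are swapped into one another, and on the shared labels real addition is commutative.

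For \textbf{associativity}, both $(X\cup_+Y)\cup_+Z$ and $X\cup_+(Y\cup_+Z)$ have label sets $\mathcal{I}_X\cup\mathcal{I}_Y\cup\mathcal{I}_Z$ and $\mathcal{L}_X\cup\mathcal{L}_Y\cup\mathcal{L}_Z$. I would fix a vertex label $i$ and split on which of the seven nonempty subsets of $\{X,Y,Z\}$ contain $i$. In each case, both sides give the sum of the attributes $x_i,y_i,z_i$ over the graphs containing $i$, with the same terms on each side, so associativity of real addition concludes. The argument for edge labels is identical. This seven-case bookkeeping is the only tedious part. I would compress it by writing the value at $i$ as $\sum_{G\ni i}g_i$, and then show once that the two-operand rule is exactly this sum formula; with that in hand, associativity follows immediately for both bracketings.

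For \textbf{non-cancellation}, a single counterexample suffices, and this is the step that needs the right idea: absorption of labels. I take $Y$ to be the one-vertex graph $(\{(1,0)\},\varnothing)$, $X=\emptyset_{\mathbb{G}}$, and $Z=Y$. Then $X\cup_+Y=Y$ and $Z\cup_+Y=(\{(1,0)\},\varnothing)=Y$, yet $X\neq Z$. The equality $Z\cup_+Y=Y$ holds only because the attribute is $0$. For a less degenerate example, take $X=(\{(1,2)\},\varnothing)$, $Z=(\{(1,1)\},\varnothing)$, and $Y=(\{(1,-1)\},\varnothing)$ against $Y'=(\{(1,0)\},\varnothing)$. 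Nevertheless, the first example is already a valid witness, and it is the cleanest. The point is that the presence or absence of a label cannot be cancelled, because $\cup_+$ takes the set union of labels. I expect no real obstacle beyond presenting the associativity cases cleanly.
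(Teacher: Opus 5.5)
Your proof is correct and follows the paper's route. Commutativity and associativity are inherited labelwise from set union and real addition, and your witness ($X=\emptyset_{\mathbb{G}}$, $Z=Y$ with $Y$ a one-vertex zero graph) is a special case of the paper's counterexample, which takes two distinct zero subgraphs of $Y$.

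Drop the ``less degenerate'' example, however: it is not a witness. There $X\cup_+Y=(\{(1,1)\},\varnothing)\neq(\{(1,0)\},\varnothing)=Z\cup_+Y$, and the equality $X\cup_+Y=Z\cup_+Y'$ uses different right operands, so it is not an instance of cancellation. In fact, cancellation can fail only at a vertex or edge label of $Y$ that exactly one of $X$ and $Z$ carries, with value $0$.
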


\begin{prop}
	\label{AddUnionIdentity}
	The identity element of the pair \((\mathbb{G}, \cup_+)\) is the empty graph \(\emptyset_{\mathbb{G}}\).
\end{prop}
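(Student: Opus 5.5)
The plan is to check directly from Definition~\ref{AddUnionDef} that \(\emptyset_{\mathbb{G}}=(\varnothing,\varnothing)\) is a neutral element for \(\cup_+\), and then to argue that it is the only one.

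\textbf{Neutrality.} Fix an arbitrary \(X=(V_X,E_X)\in\mathbb{G}\), with label sets \(\mathcal{I}_X\) and \(\mathcal{L}_X\). The empty graph has \(\mathcal{I}_{\emptyset_{\mathbb{G}}}=\varnothing\) and \(\mathcal{L}_{\emptyset_{\mathbb{G}}}=\varnothing\). Substituting into the vertex formula of Definition~\ref{AddUnionDef}:
\begin{itemize}
\item the first set is indexed by \(\mathcal{I}_X\cap\varnothing=\varnothing\), so it is empty;
\item the third set is indexed by \(\varnothing\setminus(\varnothing\cap\mathcal{I}_X)=\varnothing\), so it is empty;
\item the second set is indexed by \(\mathcal{I}_X\setminus(\mathcal{I}_X\cap\varnothing)=\mathcal{I}_X\), so it is \(\{(i,x_i)\mid i\in\mathcal{I}_X\}=V_X\).
\end{itemize}
Hence \(V_X\cup_+\varnothing=V_X\). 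The same substitution on edge labels gives \(E_X\cup_+\varnothing=E_X\), so \(X\cup_+\emptyset_{\mathbb{G}}=X\).

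By commutativity (Proposition~\ref{PropAddUnion}), \(\emptyset_{\mathbb{G}}\cup_+X=X\) as well, so \(\emptyset_{\mathbb{G}}\) is a two-sided identity. The cardinality formula after Definition~\ref{AddUnionDef} gives a quick sanity check: \(|V_X\cup_+\varnothing|=|\mathcal{I}_X|+0-0\), as expected.

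\textbf{Uniqueness.} I will use the standard monoid argument, which needs only commutativity. If \(e\) and \(e'\) are both identities, then \(e=e\cup_+e'=e'\).

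If a more concrete argument is wanted, it goes as follows. Suppose \(e=(V_e,E_e)\) satisfies \(X\cup_+e=X\) for all \(X\). Take \(X=\emptyset_{\mathbb{G}}\). By the computation above, \(\emptyset_{\mathbb{G}}\cup_+e=e\), so \(e=\emptyset_{\mathbb{G}}\). It is worth noting why no nonempty graph can work: a vertex label \(i\in\mathcal{I}_e\setminus\mathcal{I}_X\) would add a new vertex to \(X\), and \(\cup_+\) never removes labels.

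\textbf{Main obstacle.} There is no real difficulty here; the only delicate point is bookkeeping with the label sets. In particular, the set differences in Definition~\ref{AddUnionDef} must reduce correctly when one argument is empty, and this has to be checked separately for the vertex sets and the edge sets.
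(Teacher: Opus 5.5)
Your proof is correct, and your neutrality check is the same direct computation the paper uses. For uniqueness you take a shorter road. The paper first fixes $G$ and characterizes all $X$ with $G \cup_+ X = G$. From the cardinality identity $|\mathcal{I}_G| + |\mathcal{I}_X| - |\mathcal{I}_G \cap \mathcal{I}_X| = |\mathcal{I}_G|$ it derives $\mathcal{I}_X \subseteq \mathcal{I}_G$ and $\mathcal{L}_X \subseteq \mathcal{L}_G$, and then zero attributes and weights, so the candidates for a given $G$ are exactly its null subgraphs. Only then does it eliminate these candidates by testing them against $\emptyset_{\mathbb{G}}$, which is your argument $e = e \cup_+ e' = e'$ in disguise. (With the paper's two-sided definition of identity, that argument does not even need commutativity.)

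You skip the characterization and apply the uniqueness argument directly, which is all the statement requires. Your route also avoids cardinal arithmetic. Since $\mathcal{I}$ may be countably infinite (the total graph then has infinitely many vertices), the paper's step from subtracting cardinalities to $\mathcal{I}_X \subseteq \mathcal{I}_G$ is only justified for finite label sets. Your observation that $\cup_+$ never removes labels works in general: the label set of $G \cup_+ X$ is $\mathcal{I}_G \cup \mathcal{I}_X$.

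What the paper's longer route buys is extra information. The null subgraphs of $G$ act as identities for that particular $G$, and this is exactly the phenomenon behind the failure of additive cancellation in Proposition~\ref{PropAddUnion}.
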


\begin{prop}
	\label{Symetric+U}
	No element \( G \in \mathbb{G} \), except for \( \emptyset_{\mathbb{G}} \), which is its own inverse, has an inverse with respect to \( \cup_+ \).
\end{prop}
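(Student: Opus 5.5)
The plan is to argue by cardinality of the vertex and edge sets, using the counting formulas recorded right after Definition~\ref{AddUnionDef}. First, Proposition~\ref{AddUnionIdentity} gives that $\emptyset_{\mathbb{G}}$ is the identity element for $\cup_+$. An inverse of $G$ is a graph $H\in\mathbb{G}$ with $G\cup_+ H = H\cup_+ G = \emptyset_{\mathbb{G}}$. By commutativity (Proposition~\ref{PropAddUnion}) it suffices to consider $G\cup_+ H=\emptyset_{\mathbb{G}}$.

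The first step handles the empty graph. Taking $G=H=\emptyset_{\mathbb{G}}$ in the definition gives vertex and edge sets that are unions of three empty families, so $\emptyset_{\mathbb{G}}\cup_+\emptyset_{\mathbb{G}}=\emptyset_{\mathbb{G}}$. Hence $\emptyset_{\mathbb{G}}$ is its own inverse.

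The second step is the core of the argument. Suppose $G\neq\emptyset_{\mathbb{G}}$ and that some $H$ satisfies $G\cup_+ H=\emptyset_{\mathbb{G}}$. Since $G$ is nonempty, either $\mathcal{I}_G\neq\varnothing$ or $\mathcal{L}_G\neq\varnothing$. Because $\mathcal{L}_G\subseteq\mathcal{I}_G\times\mathcal{I}_G$, a nonempty edge set forces a nonempty vertex set, so in fact $\mathcal{I}_G\neq\varnothing$. The vertex label set of $G\cup_+ H$ is $\mathcal{I}_G\cup\mathcal{I}_H$, as one reads off from the three pieces in Definition~\ref{AddUnionDef}: the intersection, $\mathcal{I}_G\setminus\mathcal{I}_H$, and $\mathcal{I}_H\setminus\mathcal{I}_G$. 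Its cardinality is therefore
\[
|\mathcal{I}_G|+|\mathcal{I}_H|-|\mathcal{I}_G\cap\mathcal{I}_H| \;\ge\; |\mathcal{I}_G| \;\ge\; 1 .
\]
Consequently $V_{G\cup_+H}\neq\varnothing$, which contradicts $G\cup_+H=\emptyset_{\mathbb{G}}=(\varnothing,\varnothing)$. So no such $H$ exists.

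The only subtle point is the possible objection that adding attributes $x_i+y_i$ with $y_i=-x_i$ could cancel a vertex. I will stress that $\cup_+$ never deletes labels: a vertex with attribute $0$ is still a vertex, consistent with the Note distinguishing zero weights from absent edges. Cancellation of values produces a zero graph in the sense of Definition~\ref{zeroGraph}, not the empty graph. This label-preservation observation, i.e.\ $\mathcal{I}_{G\cup_+H}=\mathcal{I}_G\cup\mathcal{I}_H$, is the step I would state explicitly, because the whole argument rests on it. With that in place, combining both steps proves the proposition.
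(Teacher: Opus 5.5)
Your proof is correct and is essentially the paper's argument: the paper likewise uses that the vertex label set of $G\cup_+ X$ has cardinality $|\mathcal{I}_G|+|\mathcal{I}_X|-|\mathcal{I}_G\cap\mathcal{I}_X|$, which can vanish only if $\mathcal{I}_G=\mathcal{I}_X=\varnothing$, and it treats the edges in the same way. Your observation $\varnothing\neq\mathcal{I}_G\subseteq\mathcal{I}_G\cup\mathcal{I}_H$, together with $\mathcal{L}_G\subseteq\mathcal{I}_G\times\mathcal{I}_G$, is a slightly cleaner form of the same idea; it also avoids subtracting cardinalities, which is not meaningful for infinite graphs such as total graphs, so you could drop the subtraction formula from your own write-up too.
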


Since \((\mathbb{G}, \cup_+)\) is associative and commutative (Proposition~\ref{PropAddUnion}), admits the empty graph \(\emptyset_{\mathbb{G}}\) as identity (Proposition~\ref{AddUnionIdentity}), satisfies the axioms of scalar multiplication (Definition~\ref{ActionLaw}), and has no additive inverses with respect to \(\cup_+\) except for \(\emptyset_{\mathbb{G}}\) (Proposition~\ref{Symetric+U}), it follows that \((\mathbb{G}, \cup_+, \scal)\) is a complete and simple semi-vector space over \(\mathbb{R}_0^+\); moreover, since additive cancellation fails (Proposition~\ref{PropAddUnion}), \((\mathbb{G}, \cup_+,\scal)\) is not regular.

\subsection{\label{DemTheo2}Proof of Theorem \ref{GraphSetStructuren+}}
\noindent The approach is the same as that used in the proof of Theorem \ref{GraphSetStructureU+} in Subsection \ref{DemTheo1}. The propositions presented here are proved in Appendix \ref{ProoFTheo2}.

\begin{prop}
	\label{PropAddInter}
	The additive intersection \( \cap_+ \) on \( \mathbb{G} \) satisfies the following properties for all \( X, Y, Z \in \mathbb{G} \): 
	
	\begin{enumerate}
		\item Commutativity: \( X \cap_+ Y = Y \cap_+ X \);
		\item Associativity: \( (X \cap_+ Y) \cap_+ Z = X \cap_+ (Y \cap_+ Z) \); 
		\item  Non-additive cancellation: \( X \cap_+ Y = Z \cap_+ Y \) does not imply \( X = Z \).
	\end{enumerate}
\end{prop}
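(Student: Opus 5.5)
The plan is to unfold Definition~\ref{AddInterDef} componentwise: a graph $X \cap_+ Y$ is completely determined by its vertex label set $\mathcal{I}_X \cap \mathcal{I}_Y$, its edge label set $\mathcal{L}_X \cap \mathcal{L}_Y$, and the real values attached to each surviving label. Two graphs in $\mathbb{G}$ are equal exactly when these label sets coincide and the attached values agree label by label. Each of the three items therefore reduces to a statement about set intersection on labels together with a statement about real addition on the attached values. I will treat vertices and edges in parallel, writing out only the vertex case and noting that the edge case is identical with $\mathcal{I}$ replaced by $\mathcal{L}$ and $x_i$ by $w_{k,l}$.

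\textbf{Commutativity.} Let $V_X=\{(i,x_i)\}$ and $V_Y=\{(i,y_i)\}$. Then $V_X \cap_+ V_Y = \{(i,x_i+y_i)\mid i\in \mathcal{I}_X\cap\mathcal{I}_Y\}$. This equals $V_Y \cap_+ V_X$ because $\cap$ is commutative on $2^{\mathcal{I}}$ and $+$ is commutative on $\mathbb{R}$.

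\textbf{Associativity.} The vertex label set of $(X\cap_+Y)\cap_+Z$ is $(\mathcal{I}_X\cap\mathcal{I}_Y)\cap\mathcal{I}_Z$, and on each such label the value is $(x_i+y_i)+z_i$. The other bracketing gives the label set $\mathcal{I}_X\cap(\mathcal{I}_Y\cap\mathcal{I}_Z)$ and the value $x_i+(y_i+z_i)$. Associativity of $\cap$ and of $+$ closes the argument.

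One point to check is that the result is a loopless graph in $\mathbb{G}$. This holds because $\mathcal{L}_X\cap\mathcal{L}_Y \subseteq (\mathcal{I}_X\cap\mathcal{I}_Y)^2$ and the intersection contains no loop if neither operand does.

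\textbf{Non-cancellation.} This item is the only one needing an idea, since a failure of an implication requires an explicit counterexample; it is not really an obstacle. Take $Y=\emptyset_{\mathbb{G}}$, so that $X\cap_+Y=\emptyset_{\mathbb{G}}=Z\cap_+Y$ for every $X,Z$. Then choose $X=(\{(1,0)\},\varnothing)$ and $Z=\emptyset_{\mathbb{G}}$, which gives $X\neq Z$. A less degenerate example, if preferred, uses $Y=(\{(1,y_1)\},\varnothing)$, $X=(\{(1,a),(2,b)\},\varnothing)$ and $Z=(\{(1,a)\},\varnothing)$: both intersections equal $(\{(1,a+y_1)\},\varnothing)$, yet $X\neq Z$. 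This exhibits the loss of information outside the common labels, which is the genuine reason cancellation fails.
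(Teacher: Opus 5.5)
Your proposal is correct and follows essentially the same route as the paper: commutativity and associativity are inherited label by label from set intersection and real addition, and cancellation fails because $X$ and $Z$ can differ outside the labels of $Y$. The paper takes $Y$ to be a null graph $Y^0$ contained in two distinct graphs $X$ and $Z$; your choice $Y=\emptyset_{\mathbb{G}}$ is the degenerate instance of that, and your second example uses the same mechanism with a nonzero attribute on $Y$.
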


\begin{prop}
	\label{IdAddInter}
	The identity element of the pair \( (\mathbb{G}, \cap_+) \) is the total, complete, and zero graph\footnote{See Definitions~\ref{zeroGraph}, \ref{totalGraph}, and \ref{completeGraph} for the qualifiers of the identity element \( \mathcal{G}_{t,c}^0 \).} \( \mathcal{G}_{t,c}^0 \).
\end{prop}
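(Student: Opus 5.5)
We must show that $\mathcal{G}_{t,c}^0$ is a two-sided identity for $\cap_+$: for every $X=(V_X,E_X)\in\mathbb{G}$, $X\cap_+\mathcal{G}_{t,c}^0 = X$. By commutativity of $\cap_+$ (Proposition~\ref{PropAddInter}), it suffices to check one side. We will also check that this graph is indeed an element of $\mathbb{G}$, and that it is the unique identity.

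\textbf{Describing $\mathcal{G}_{t,c}^0$.} By Definition~\ref{totalGraph}, its label set is $\mathcal{I}$. By Definition~\ref{completeGraph} (loopless, since we work in $\mathbb{G}$), its edge label set is $\mathcal{L}^{c}:=\{(p,q)\in\mathcal{I}\times\mathcal{I}\mid p\neq q\}$. By Definition~\ref{zeroGraph}, every attribute and weight equals $0$. Thus
\[
\mathcal{G}_{t,c}^0=\bigl(\{(i,0)\mid i\in\mathcal{I}\},\ \{(p,q,0)\mid (p,q)\in\mathcal{L}^{c}\}\bigr).
\]
One subtlety is that Definition~\ref{totalGraph} requires $V=\mathbb{V}$, whose attributes are the fixed $x_i$. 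We interpret ``total'' as referring to the label set $\mathcal{I}$, which is what the combination with ``zero'' requires.

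\textbf{Computation.} For any $X\in\mathbb{G}$ we have $\mathcal{I}_X\subseteq\mathcal{I}$. Since $X$ is loopless and $\mathcal{L}_X\subseteq\mathcal{I}_X\times\mathcal{I}_X$, we also have $\mathcal{L}_X\subseteq\mathcal{L}^{c}$. Hence $\mathcal{I}_X\cap\mathcal{I}=\mathcal{I}_X$ and $\mathcal{L}_X\cap\mathcal{L}^c=\mathcal{L}_X$. Definition~\ref{AddInterDef} then gives
\[
V_X\cap_+V_{\mathcal{G}_{t,c}^0}=\{(i,x_i+0)\mid i\in\mathcal{I}_X\}=V_X,
\qquad
E_X\cap_+E_{\mathcal{G}_{t,c}^0}=\{(k,l,w_{k,l}+0)\mid (k,l)\in\mathcal{L}_X\}=E_X .
\]
So $X\cap_+\mathcal{G}_{t,c}^0=X$.

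\textbf{Necessity and uniqueness.} Uniqueness follows from the standard argument: if $e$ and $e'$ are both identities, then $e=e\cap_+e'=e'$. It is still worth explaining why each qualifier is needed.

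If $e$ is an identity, then $\mathcal{I}_X\cap\mathcal{I}_e=\mathcal{I}_X$ must hold for every $X$. Taking $X$ to be a single vertex $i$ forces $\mathcal{I}_e=\mathcal{I}$, so $e$ must be total.

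Similarly, taking $X$ to be a single edge $(p,q)$ forces $\mathcal{L}_e\supseteq\mathcal{L}^c$, so $e$ must be complete.

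Finally, $x_i+e_i=x_i$ for all $x_i\in\mathbb{R}$ forces $e_i=0$, and likewise every edge weight of $e$ must be $0$. So $e$ must be zero.

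\textbf{Main obstacle.} The only delicate point is the interpretation issue noted above: the total graph must be read at the level of labels, not as literally equal to $\mathbb{V}$ with its fixed attributes. If loops are allowed, the same argument goes through with $\mathcal{L}^c$ replaced by $\mathcal{I}\times\mathcal{I}$, which yields Note~\ref{IdWithLoops}.
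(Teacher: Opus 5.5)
Your proof is correct, but it is organized differently from the paper's. The paper argues only in the necessity direction. Assuming $X$ is an identity, it deduces from $G\cap_+X=G$ that $\lvert\mathcal{I}_G\rvert=\lvert\mathcal{I}_G\cap\mathcal{I}_X\rvert$, hence $\mathcal{I}_G\subseteq\mathcal{I}_X$ and $x_i=0$ on $\mathcal{I}_G$, and likewise $\mathcal{L}_G\subseteq\mathcal{L}_X$ with zero weights. It then asserts that the only graph meeting these constraints for every $G$ is $\mathcal{G}_{t,c}^0$, and never explicitly checks that $G\cap_+\mathcal{G}_{t,c}^0=G$. You instead make that sufficiency computation the core of the proof. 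You obtain uniqueness from the abstract argument $e=e\cap_+e'=e'$, and treat the necessity of each qualifier as a supplement using test graphs (a single vertex, a single edge).

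Your version gains two things. First, it closes the existence direction that the paper leaves implicit. Second, by reading $\mathcal{I}_X\cap\mathcal{I}_e=\mathcal{I}_X$ directly as a set equality, it avoids the paper's cardinality step, which is only sound for finite label sets. Since $\mathcal{I}\subseteq\mathbb{N}$ may be infinite, $\lvert\mathcal{I}_G\rvert=\lvert\mathcal{I}_G\cap\mathcal{I}_X\rvert$ does not force containment in general.

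You also make explicit two points the paper passes over. One is that ``complete'' must mean all ordered pairs $(p,q)$ with $p\neq q$, since graphs may be directed; looplessness of $\mathbb{G}$ then ensures $\mathcal{L}_X\subseteq\mathcal{L}^c$. The other is that ``total'' has to be read at the level of labels rather than as literal equality $V=\mathbb{V}$.

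The paper's route is shorter, characterizing the identity in one pass from the defining equation, but yours is the more complete and rigorous of the two.
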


\begin{prop}
	\label{SymInter+}
	Except for the total and complete graphs \( \mathcal{G}_{t,c} \), no graph in \( \mathbb{G} \) has an inverse under \( \cap_+ \).
\end{prop}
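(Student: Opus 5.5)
The plan is to characterize directly which graphs admit an inverse with respect to \(\cap_+\), using the identity element \(\mathcal{G}_{t,c}^0\) given by Proposition~\ref{IdAddInter}. A graph \(X\) has an inverse if and only if there exists \(Y\in\mathbb{G}\) with \(X\cap_+ Y = Y\cap_+ X = \mathcal{G}_{t,c}^0\). By commutativity (Proposition~\ref{PropAddInter}) one equation suffices. I would compare the two sides of this equation in two parts: first the label sets (the topology), then the attributes and weights.

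\textbf{Topological necessity.} By Definition~\ref{AddInterDef}, the label sets of \(X\cap_+Y\) are \(\mathcal{I}_X\cap\mathcal{I}_Y\) and \(\mathcal{L}_X\cap\mathcal{L}_Y\). The identity \(\mathcal{G}_{t,c}^0\) has vertex labels \(\mathcal{I}\) (total) and edge labels \(\{(p,q)\in\mathcal{I}\times\mathcal{I}\mid p\neq q\}\) (complete and loopless). Equality of the two graphs therefore forces \(\mathcal{I}_X\cap\mathcal{I}_Y=\mathcal{I}\), hence \(\mathcal{I}_X=\mathcal{I}\). It likewise forces \(\mathcal{L}_X\cap\mathcal{L}_Y\) to equal all off-diagonal pairs, hence \(\mathcal{L}_X\) contains every such pair. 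So \(X\) must be total and complete. Contrapositively, if \(X\) lacks some vertex label \(i\), or some edge \((p,q)\) with \(p\neq q\), then that label is also missing from \(X\cap_+Y\) for every \(Y\). The intersection then differs from the identity, and \(X\) has no inverse. The same argument shows that any inverse \(Y\) must itself be total and complete.

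\textbf{Sufficiency.} Let \(X=\mathcal{G}_{t,c}\) be total and complete, with attributes \(x_i\) and weights \(w_{p,q}\). Take \(Y=(-1)\scal X\), which is allowed because the scalar field is \(\mathbb{R}\) (Note~\ref{Semi-vector-Note}). By Definition~\ref{ActionLaw}, \(Y\) has the same label sets as \(X\), with attributes \(-x_i\) and weights \(-w_{p,q}\). Then \(X\cap_+Y\) has label sets \(\mathcal{I}\) and all off-diagonal pairs, with every attribute \(x_i-x_i=0\) and every weight \(w_{p,q}-w_{p,q}=0\). This is exactly \(\mathcal{G}_{t,c}^0\). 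The inverse is unique: the topological step forces \(Y\) to be total and complete, and the attribute equations \(x_i+y_i=0\) and \(w_{p,q}+v_{p,q}=0\) then force \(Y=(-1)\scal X\).

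No step is a real obstacle. The only delicate point is bookkeeping in the topological step: the identity is total and complete, so an intersection can never recover a missing label, and this is what rules out all non-total or non-complete graphs. For graphs with loops the same argument applies, with the diagonal pairs included and \(\mathcal{G}_{t,c}^{l,0}\) as the identity (Note~\ref{IdWithLoops}).
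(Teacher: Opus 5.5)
Your proof is correct and follows essentially the same route as the paper: compare label sets to force $\mathcal{I}_X=\mathcal{I}$ (and completeness), then cancel attributes and weights via $y_i=-x_i$, $v_{p,q}=-w_{p,q}$. Your set-level step $\mathcal{I}_X\cap\mathcal{I}_Y=\mathcal{I}$ is more careful than the paper's cardinality step $|\mathcal{I}_G\cap\mathcal{I}_X|=|\mathcal{I}|$, which does not force equality of sets when $\mathcal{I}$ is infinite, and you make the edge condition and the sufficiency direction explicit where the paper leaves them implicit.
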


Since \((\mathbb{G}, \cap_+)\) is associative and commutative (Proposition~\ref{PropAddInter}), satisfies the axioms of scalar multiplication (Properties~\ref{ActionLaw}), admits the total, complete, and zero graph \(\mathcal{G}_{t,c}^0\) as identity (Proposition~\ref{IdAddInter}), and has no additive inverses for any graphs except for the graphs \(\mathcal{G}_{t,c}\) (Proposition~\ref{SymInter+}), it follows that \((\mathbb{G}, \cap_+,\scal)\) is a complete and simple semi-vector space over \(\mathbb{R}\); moreover, since additive cancellation fails (Proposition~\ref{PropAddInter}), \((\mathbb{G}, \cap_+,\scal)\) is not regular.

\section{\label{VBS_Frame}Variable-Basis State-Space}

\noindent B. Mazur points out in \cite{mazur2008one} that much of mathematics relies on representing the same object in different ways. Following this idea, we embed the graph space into a variable-basis space, thereby providing a flexible representation that facilitates the formulation of a hybrid dynamical model and the proof of existence of solutions (see \ref{ResExist}). We first describe the set of variable-basis elements.

\subsection{\label{VBS_set}Set of Variable-Basis Elements}

\noindent The variable-basis state-space introduced in this paper is inspired by the dimension-varying state-space of Cheng \cite{cheng2018linear,cheng2016equivalence}, but differs fundamentally: each vector in the variable-basis space is defined on an explicitly specified basis, whereas in the dimension-varying space, the basis is not explicitly given. Consequently, two vectors of the same dimension in the variable-basis space may lie on different bases, while in the dimension-varying space they share the same basis. 

An example of evolution in a variable-basis space is shown in Fig.~\ref{AttribDynam}, where the system state starts as a one-dimensional vector on basis \(b_1\), evolves on \((b_1, b_2)\), and ends on basis \(b_2\). Although the state vector’s dimension is the same at the beginning and end, the bases differ. Let us introduce the elements that constitute the variable-basis set.


\begin{figure}[h]
  \centering
  \resizebox{.47\textwidth}{!}{
  
   \begin{tikzpicture}[math3d]
  
    \def \T {9.}
    \def \B {2}
    \def \rB {5.5}
    
    \def \F {2.8}
    
    \def  \db1{1}
    
    \pgfmathsetmacro{\r}{105}
    \draw[->] (0,0,0)--(\T,0,0) node[pos=.99,below]{$t$};
    \draw[->] (0,0,0)--(0,\B,0) node[pos=.95,left]{$b_1$};
    \draw[->] (\F,0,0) -- (\F,2,0) node[pos=.95,left]{$b_1$};
    \draw[->] (\F,0,0)--(\F,0,3) node[pos=.95,left]{$b_2$} ;
    \draw[->] (2*\F,0,0)--(2*\F,0,3) ; 
    
    \draw (0,0,0) node[pos=0.95, below left] {$0$};

    \draw[blue, line width=1pt] plot[domain=-\F:0,samples=500]( {\F+\x}, {.5*sin(\r*\x)+1}, 0); 
      
    \foreach \x in {-1}{
    \draw[dashed] ( 0, {.5*sin(\r*\x)+1}, 0) --( {\F+\x}, {.5*sin(\r*\x)+1}, 0)--( {\F+\x}, 0, 0);
    }
    
    \draw[blue, line width=1pt] plot[domain=0:\F,samples=500]( {\F+\x}, {.5*sin(\r*\x)+1}, {.5*sin(\r*\x)+2});
    \draw[blue, line width=1pt] (\F,{+1},0)--( {\F}, {1}, {2});
    \draw[dotted,blue, line width=1pt] plot[domain=0:\F,samples=500]( {\F+\x}, {.5*sin(\r*\x)+1}, 0);
    \draw[dotted,blue, line width=1pt] plot[domain=0:\F,samples=500]( {\F+\x}, 0, {.5*sin(\r*\x)+2});
    
    \draw(\F,0,0) node[above right] {$t_i$};
    
    \draw(2*\F,0,0) node[above right] {$t_j$};
    
\foreach \x in {0,\F}{
  \draw[dashed] ( {\F+\x}, {.5*sin(\r*\x)+1}, {.5*sin(\r*\x)+2})--( {\F+\x}, 0, {.5*sin(\r*\x)+2})--( {\F+\x}, 0, 0)--( {\F+\x}, {.5*sin(\r*\x)+1}, 0)--cycle;
\draw[dashed] ( {\F+\x}, 0, {.5*sin(\r*\x)+2}) -- ( {\F}, 0, {.5*sin(\r*\x)+2}) ;
\draw[dashed] ( {\F+\x}, {.5*sin(\r*\x)+1}, 0) -- ( {\F}, {.5*sin(\r*\x)+1}, 0) ;
\draw[dashed] ( {\F+\x}, {.5*sin(\r*\x)+1}, {.5*sin(\r*\x)+2})--( {\F}, {.5*sin(\r*\x)+1}, {.5*sin(\r*\x)+2});
}

   \draw[blue, line width=1pt] plot[domain=\F:2*\F,samples=500]( {\F+\x}, 0, {.5*sin(\r*\x)+2});
   \draw[blue,line width=1pt] ( {2*\F}, {.5*sin(\r*\F)+1}, {.5*sin(\r*\F)+2})--( {2*\F}, 0, {.5*sin(\r*\F)+2});
  \foreach \x in {2*\F}{
    \draw[dashed] ( {\F+\x}, 0, 0) --( {\F+\x}, 0, {.5*sin(\r*\x)+2})--(2*\F, 0, {.5*sin(\r*\x)+2});
   }
   
   
  
  \draw[->] (0,-\rB,0)--(\T,-\rB,0) node[pos=.99,below]{$t$};
  \draw[->] (0,-\rB,0)--(0,-\rB+\B+.5,0) node[pos=.95,above left]{dim};
  
  \node at (0,-\rB,0) [pos==.95, below left] {$0$};
  \draw[dashed] (\F,-\rB + 1,0) -- (\F,-\rB,0) node[below] {$t_i$};
  \draw[dashed] (2*\F,-\rB + 1,0) -- (2*\F,-\rB,0) node[below] {$t_j$};
  
  \draw[blue, line width=1pt] (0,-\rB + 1,0)--(\F,-\rB+1,0);
  \draw (.5*\F,-\rB + 1,0) node[above] {$\textcolor{blue}{b_1}$};
  
  \draw[blue, line width=1pt] (\F,-\rB + 2,0)--(2*\F,-\rB+2,0);
  \draw (1.5*\F,-\rB + 2,0) node[above] {$\textcolor{blue}{(b_1,b_2)}$};
  
  \draw[blue, line width=1pt] (2*\F,-\rB + 1,0)--(3*\F,-\rB+1,0);
  \draw (2.5*\F,-\rB + 1,0) node[above] {$\textcolor{blue}{b_2}$};
  
  \draw[blue, line width=1pt] (\F,-\rB+1,0)-- (\F,-\rB + 2,0);
  \draw[blue, line width=1pt]  (2*\F,-\rB + 2,0) -- (2*\F,-\rB+1,0);
  
  \draw[dashed] (\F,-\rB + 2,0) -- (0,-\rB + 2,0) node[left] {$2$};
  \draw (0,-\rB + 1,0) node[left] {$1$};

  \end{tikzpicture}

  }
  
  \vspace{-.2 cm}
  \caption{\label{AttribDynam}Illustration of the evolution of the state of a system in a variable-basis state space. The dashed curves represent the projections of the continuous curve onto the planes \((t, b_i)\), with \(i = 1, 2\).}
\end{figure}

Let $\mathcal{B}_v = \{\mathbf{b}_i \mid i \in \mathcal{I}\}$ be a set of linearly independent vectors, where $\mathcal{I} \subseteq \mathbb{N}$ is the universe introduced in Subsection \ref{ElemGraphTheory}. The power set of $\mathcal{B}_v$ is denoted by $2^{\mathcal{B}_v}$. For any subset $B \in 2^{\mathcal{B}_v}$, let $I(B) = \{i \in \mathcal{I} \mid \mathbf{b}_i \in B\}$ denote the set of indices associated with the vectors in $B$. The subset $B$ generates a subspace $\mathcal{V}(B)$, defined as:
\begin{equation}
	\label{attVector}
	\mathcal{V}(B) \!= \!\operatorname{span}(B) \!=\! \left\{ \sum_{i \in I(B)} \!\!\!\alpha_i \mathbf{b}_i \mid \alpha_i \in \mathbb{R}, \mathbf{b}_i \in B \right\}.
\end{equation}

Thus, for every $B \subseteq \mathcal{B}_v$, $\mathcal{V}(B)$ consists of all linear combinations of the vectors in $B$. With vector addition and scalar multiplication, it forms a vector space over $\mathbb{R}$. When $B = \varnothing$, $\mathcal{V}(B)$ is the trivial subspace containing only the empty vector $\emptyset_{\mathcal{V}}$, with null dimension, since its basis is $\varnothing$.
\begin{defn}
	\label{VBS_attributes}
	The variable-basis space generated by subsets of $\mathcal{B}_v$ is the space
	\vspace{-2 mm}
	\[
	\mathcal{V} \;:=\; \bigcup_{B \in 2^{\mathcal{B}_v}} \mathcal{V}(B).
	\]
	An element of $\mathcal{V}$, denoted $\mathbf{x}_B$, is said to be defined on the basis $B \in 2^{\mathcal{B}_v}$ and can be written as
	\[
	\mathbf{x}_B \;:=\; \sum_{k \in I(B)} x_k \mathbf{b}_k, \quad x_k \in \mathbb{R}.
	\]
\end{defn}

Let $\mathcal{B}_e = \{\mathbf{b}_k \otimes \mathbf{b}_l \mid \mathbf{b}_k, \mathbf{b}_l \in \mathcal{B}_v, \forall k,l \in \mathcal{I}\}$ denote the set generated by the tensor product $\mathcal{B}_v \otimes \mathcal{B}_v$, whose elements are referred to interchangeably as vectors or tensors \cite{burdick1995introduction}; $\mathcal{B}_e$ is linearly independent, and for any $D \in 2^{\mathcal{B}_e}$, let $I(D) = \{(k,l) \in \mathcal{I}^2 \mid \mathbf{b}_k \otimes \mathbf{b}_l \in D\}$ denote the index pairs associated with the vectors in $D$, and let the subset $D$ generate the subspace $\mathcal{W}(D)$ defined as:
\begin{equation}
	\label{weightVector}
	\begin{split}
		&\mathcal{W}(D) = \operatorname{span}(D) =\\
		&\left\{ \sum_{(k,l) \in I(D)} \beta_{k,l} \mathbf{b}_k \otimes \mathbf{b}_l \mid \beta_{k,l} \in \mathbb{R}, \mathbf{b}_k \otimes \mathbf{b}_l \in D \right\}.
	\end{split}
\end{equation}

For any $D \in 2^{\mathcal{B}_e}$, $\mathcal{W}(D)$, with vector addition and scalar multiplication, forms a vector space over $\mathbb{R}$. For each $D \in 2^{\mathcal{B}_e}$, we define the corresponding boolean-valued spaces as:
\begin{equation}
	\label{boolVect}
	\begin{split}
		&\mathcal{A}(D)= \operatorname{span}(D) = \\
		&\left\{ \!\!\sum_{(k,l) \in I(D)} \!\!\!\!\!\!\gamma_{k,l} \mathbf{b}_k \otimes \mathbf{b}_l \mid \gamma_{k,l} \in \{0,1\}, \mathbf{b}_k \otimes \mathbf{b}_l \in D \right\}.
	\end{split}
\end{equation}
When $D = \emptyset$, both $\mathcal{W}(D)$ and $\mathcal{A}(D)$ contain only the empty vectors $\emptyset_{\mathcal{W}}$ and $\emptyset_{\mathcal{A}}$, respectively, each of null dimension.

\begin{defn}
	\label{free-dim-wei}
	The variable-basis  space with real-valued coordinates, generated by subsets of $\mathcal{B}_e$, is the space
	\[
	\mathcal{W} \;:=\; \bigcup_{D \in 2^{\mathcal{B}_e}} \mathcal{W}(D).
	\]
	An element of $\mathcal{W}$, denoted $\mathbf{w}_D$, is said to be defined on the basis $D \in 2^{\mathcal{B}_e}$ and can be expressed as
	\[
	\mathbf{w}_D \;:=\; \sum_{(p,q) \in I(D)} w_{p,q} \, \mathbf{b}_p \otimes \mathbf{b}_q, \quad w_{p,q} \in \mathbb{R}.
	\]
\end{defn}

\begin{defn}
	\label{free-dim-struct}
	The variable-basis boolean-valued space generated by subsets of $\mathcal{B}_e$ is the space
	\[
	\mathcal{A} \;:=\; \bigcup_{D \in 2^{\mathcal{B}_e}} \mathcal{A}(D).
	\]
	An element of $\mathcal{A}$, denoted $\mathbf{a}_D$, is said to be defined on the basis $D \subset \mathcal{B}_e$ and can be expressed as
	\[
	\mathbf{a}_D \;:=\; \sum_{(p,q) \in I(D)} a_{p,q} \, \mathbf{b}_p \otimes \mathbf{b}_q, \quad a_{p,q} \in \{0,1\}.
	\]
\end{defn}
\begin{defn}
	\label{variable-basis-product}
	The variable-basis space formed by the Cartesian product of $\mathcal{V}$, $\mathcal{W}$, and $\mathcal{A}$ is the space
	\[
	\mathcal{D} \;:=\; (\mathcal{V}, \mathcal{W}, \mathcal{A}).
	\]
	An element of $\mathcal{D}$, denoted $X_B^{D,C}$, is defined on the bases $B \subset \mathcal{B}_v$, $D \subset \mathcal{B}_e$, and $C \subset \mathcal{B}_e$, and can be expressed as
	\[
	X_B^{D,C} \;:=\; (\mathbf{x}_B, \mathbf{w}_D, \mathbf{a}_C).
	\]
\end{defn}

The variable-basis state sets $\mathcal{V}$ and $\mathcal{W}$, equipped with vector addition and scalar multiplication, are not vector spaces over $\mathbb{R}$, since vector addition loses its meaning in these spaces. When the basis is not explicitly given, $\mathcal{V}$ becomes a dimension-varying state set \cite{cheng2018linear}. By introducing V-addition and V-subtraction \cite{cheng2020equivalence}, it acquires the structure of a pseudo-vector space (Proposition 2.3 in \cite{cheng2020equivalence}). In this paper, the operations defined in Subsection \ref{RealValueOp} endow $\mathcal{V}$ and $\mathcal{W}$ with the structure of a semi-vector space over $\mathbb{R}$ (Subsection \ref{AlgbStruc}).

\subsection{\label{FromSettoSet}Mapping from Graph Set $\mathbb{G}$ to Variable-Basis Set $\mathcal{D}$}
\noindent To transition from the graph-state set \(\mathbb{G}\) to the variable-basis state set \(\mathcal{D}\), we define a mapping \(\varphi : \mathbb{G} \to \mathcal{D}\) that associates each element of \(\mathbb{G}\) with an element of \(\mathcal{D}\) as:

\begin{equation}
	\label{mapGtoD}
	\begin{array}{rcl}
		\varphi : \mathbb{G} &\longrightarrow& \mathcal{D} \\
		G &\longmapsto& X_B = (\mathbf{x}_{B}, \mathbf{w}_{D}, \mathbf{a}_{D}),
	\end{array}
\end{equation}
where \(G = (V_G, E_G)\), $B \in 2^{\mathcal{B}_v}$ and $D = B \otimes B$. In detail, the components of $X_B$ under the mapping $\varphi$ are:

\emph{i)}- From the vertex set \(V_G = \{(i, x_i)\}_{i \in \mathcal{I}_G}\), we define the vector:
\begin{equation}
	\label{att-cross}
	\mathbf{x}_{B} = \sum_{i \in I(B)} x_i \mathbf{b}_i,
\end{equation}
where $B \subset \mathcal{B}_v$, $I(B) = \{i \in \mathcal{I} \mid \mathbf{b}_i \in B\}$, and $\mathcal{I}_G = I(B)$, with $\mathcal{I}$ referring to the universe.

\emph{ii)}- From the edge set $E_G = \{(p, q, w_{p,q})\}_{(p,q) \in \mathcal{L}_G}$, we extract the weight matrix and the adjacency matrix, both viewed as second-order tensors:
\begin{equation}
	\label{weiCross}
	\mathbf{w}_{D} = \!\!\!\sum_{(p,q) \in I(D)}\!\!\!\! w_{p,q} \mathbf{b}_p \otimes \mathbf{b}_q, \hspace{ 1mm} \mathbf{a}_{D} = \!\!\!\sum_{(p,q) \in I(D)}\!\!\!\! a_{p,q} \mathbf{b}_p \otimes \mathbf{b}_q,
\end{equation}
where \(I(D) = \mathcal{I}_G \times \mathcal{I}_G\). Let us recall, by definition, that \(I(D) = \{(p,q) \in \mathcal{I}^2 \mid \mathbf{b}_p \otimes \mathbf{b}_q \in D\}\).

\begin{defn}
	Let $\varphi \colon \mathbb{G} \to \mathcal{D}$ be the mapping defined in \eqref{mapGtoD}. Its direct image is the set $\mathcal{D}_{\mathbb{G}} = \varphi(\mathbb{G} )$, expressed as:
	\[
	\mathcal{D}_{\mathbb{G}} \! = \!\left\{ (\mathbf{x}_{B}, \mathbf{w}_{D}, \mathbf{a}_{D}) \!\in\! \mathcal{D} 
	\middle| \exists G \!\in\! \mathbb{G},\ \varphi(G) \!=\! (\mathbf{x}_{B}, \mathbf{w}_{D}, \mathbf{a}_{D}) \right\}.
	\]
	Equivalently,
	\[
	\mathcal{D}_{\mathbb{G}} = \left\{ (\mathbf{x}_{B}, \mathbf{w}_{D}, \mathbf{a}_{D}) \in \mathcal{D} 
	\middle| \forall B \subseteq \mathcal{B}_v,~ D = B \otimes B \right\},
	\]
	where $\mathbf{x}_{B},~ \mathbf{w}_{D},~ \mathbf{a}_{D}$ are defined in \eqref{att-cross} and \eqref{weiCross}, with $x_i, w_{p,q} \in \mathbb{R}$, $a_{p,q} \in \{0,1\}$, $i \in I(B)$, and $(p,q) \in I(D)$.
\end{defn}

\begin{prop}
	\label{Bijectionphi}
	The mapping \(\varphi\) defined in \eqref{mapGtoD} is a bijection from \(\mathbb{G}\) to \(\mathcal{D}_{\mathbb{G}}\).
\end{prop}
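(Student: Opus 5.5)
Surjectivity onto \(\mathcal{D}_{\mathbb{G}}\) holds by construction, since \(\mathcal{D}_{\mathbb{G}}=\varphi(\mathbb{G})\) is the direct image. The substance of Proposition~\ref{Bijectionphi} is therefore injectivity. I plan to show it by exhibiting an explicit left inverse \(\psi:\mathcal{D}_{\mathbb{G}}\to\mathbb{G}\) with \(\psi\circ\varphi=\mathrm{id}_{\mathbb{G}}\), which also gives the inverse map needed later for the semi-linear isomorphism.

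\emph{First step: well-definedness of \(\varphi\).} For \(G=(V_G,E_G)\), the basis is forced: \(B=\{\mathbf{b}_i\mid i\in\mathcal{I}_G\}\) and \(D=B\otimes B\). The coefficients of \(\mathbf{x}_B\) are the attributes \(x_i\). The coefficients of \(\mathbf{w}_D\) and \(\mathbf{a}_D\) are the entries of \(\mathbf{W}_G\) and \(\mathbf{A}_G\), indexed by \(\mathcal{I}_G\times\mathcal{I}_G\). I will adopt the convention that \(w_{p,q}=0\) whenever \((p,q)\notin\mathcal{L}_G\). This is the point that needs care, because a graph's weight matrix is defined only on existing edges.

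\emph{Second step: construct \(\psi\).} Given \((\mathbf{x}_B,\mathbf{w}_D,\mathbf{a}_D)\in\mathcal{D}_{\mathbb{G}}\), set
\begin{itemize}
\item \(\mathcal{I}=I(B)\),
\item \(V=\{(i,x_i)\mid i\in I(B)\}\),
\item \(\mathcal{L}=\{(p,q)\in I(D)\mid a_{p,q}=1\}\),
\item \(E=\{(p,q,w_{p,q})\mid (p,q)\in\mathcal{L}\}\).
\end{itemize}
Let \(\psi(\cdot)=(V,E)\). This is well defined because the family \(\mathcal{B}_v\) is linearly independent, and hence so is \(\mathcal{B}_e\). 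Consequently the basis \(B\) and the coordinates \(x_i\), \(w_{p,q}\), \(a_{p,q}\) are uniquely determined by the vectors themselves. That is the purpose of the ``explicit basis'' in Definition~\ref{VBS_attributes}. Also \(\mathcal{L}\subseteq I(B)\times I(B)\), so \(\psi\) lands in \(\mathbb{G}\). Loops would require \(a_{p,p}=0\), which holds since \(G\) is loopless.

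\emph{Third step: verify \(\psi\circ\varphi=\mathrm{id}\).} The vertex labels and attributes are recovered from \(B\) and the coordinates of \(\mathbf{x}_B\). The edge set is recovered from \(\mathbf{a}_D\), and the weights from \(\mathbf{w}_D\) restricted to \(\mathcal{L}\). Injectivity follows: \(\varphi(G)=\varphi(H)\) implies \(G=\psi(\varphi(G))=\psi(\varphi(H))=H\). Alternatively, one can argue directly. Equal vertex components give \(I(B_G)=I(B_H)\) and equal attributes. Equal \(\mathbf{a}\) components give \(\mathcal{L}_G=\mathcal{L}_H\). Equal \(\mathbf{w}\) components then give equal weights on those edges.

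\emph{Main obstacle.} The hard step is the distinction, stressed in the Note following Definition~\ref{loopsGraph}, between a zero-weight edge and an absent edge. The tensor \(\mathbf{w}_D\) alone cannot tell these apart, so the argument must rely on \(\mathbf{a}_D\) to recover \(\mathcal{L}_G\). Absent edges must be encoded by a fixed value (zero) in \(\mathbf{w}_D\), as the first step arranges. Otherwise \(\varphi\) would not be a function, and the ``equivalently'' description of \(\mathcal{D}_{\mathbb{G}}\) would contain tuples with arbitrary weights on non-edges. If that description is read literally, I would restrict \(\mathcal{D}_{\mathbb{G}}\) to tuples with \(w_{p,q}=0\) whenever \(a_{p,q}=0\), so that \(\varphi\circ\psi=\mathrm{id}\) also holds.
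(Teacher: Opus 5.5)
Your argument is correct and follows the paper's proof. The paper likewise gets surjectivity for free from $\mathcal{D}_{\mathbb{G}}=\varphi(\mathbb{G})$ and dismisses injectivity as ``established by identification''. Your left inverse $\psi$, the convention $w_{p,q}=0$ off $\mathcal{L}_G$, and the use of $\mathbf{a}_D$ to separate zero-weight edges from absent ones are exactly the details that phrase leaves implicit.

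One small correction: $I(B)$ is recovered because each element of $\mathcal{V}$ carries its basis $B$ explicitly, not because $\mathcal{B}_v$ is linearly independent, which only fixes the coordinates. Otherwise a one-vertex graph with attribute $0$ and $\emptyset_{\mathbb{G}}$ would have indistinguishable images.
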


\begin{proof}
	It is sufficient to show that \( \varphi \) is both surjective and injective. By definition, since \( \mathcal{D}_{\mathbb{G}} = \varphi(\mathbb{G}) \), \( \varphi \) is surjective \cite{bourbaki2004theory}. Injectivity is simply established by identification.
\end{proof}

Since \(\varphi\) is bijective, its inverse, denoted \(\varphi^{-1}\), exists and is also bijective \cite{bourbaki2004theory}. It is defined as a mapping \(\varphi^{-1}:~\mathcal{D}_{\mathbb{G}} \to\mathbb{G}\). The compositions \(\varphi \circ \varphi^{-1}\) and \(\varphi^{-1} \circ \varphi\) yield the identity map. Then, the subspace $\mathcal{D}_{\mathbb{G}}$ is the space of graphs $\mathbb{G}$ embedded in the variable-basis space $\mathcal{D}$.

 \begin{defn}
	Let \(\emptyset_{\mathcal{D}} = (\emptyset_{\mathcal{V}}, \emptyset_{\mathcal{W}}, \emptyset_{\mathcal{A}})\) be the zero-dimensional element of the set \(\mathcal{D}\), where \(\emptyset_{\mathcal{V}}\) is the zero-dimensional vector in \(\mathcal{V}\), and \(\emptyset_{\mathcal{W}}\) and \(\emptyset_{\mathcal{A}}\) are the zero-dimensional tensors in \(\mathcal{W}\) and \(\mathcal{A}\), respectively.
\end{defn}

\begin{defn}
	\label{TotCompNull}
	Let \(X_{B} = (\mathbf{x}_{B}, \mathbf{w}_{D}, \mathbf{a}_{D})\) be an element of \(\mathcal{D}_{\mathbb{G}}\) with \(B \subset \mathcal{B}_v\) and \(D = B \otimes B\). The element \(X_B\) is said to be:
	
	\begin{enumerate}
		\item total, if and only if \(B = \mathcal{B}_v\); 
		\item complete, denoted by \(X_{B}^c = (\mathbf{x}_{B}, \mathbf{w}_{D}, \mathbf{a}_{D}^c)\), if, for all \((p, q) \in I(D)\), \(a_{p, q} = 1\) and \(a_{p, p} = 0\); 
		\item  null, denoted by \(X_{B}^0 = (\mathbf{x}_{B}^0, \mathbf{w}_{D}^0, \mathbf{a}_{D})\), if for all \(i \in I(B)\), \(x_i = 0\), and for all \((p, q) \in I(D)\), \(w_{p, q} = 0\);
		\item with loop(s), denoted by \(X_{B}^l = (\mathbf{x}_{B}, \mathbf{w}_{D}, \mathbf{a}_{D}^l)\), if there exists at least one pair \(p \in I(B)\) such that \(a_{p,p} = 1\). If \(X_B\) is complete with loop(s), denoted by \(X_B^{c,l}\), then \(a_{p,p} = 1\) for all \(p \in I(B)\).
	\end{enumerate}
\end{defn}

\begin{prop}
	The image under \(\varphi\) of:
	
	\begin{enumerate}
		\item  the empty graph \(\emptyset_{\mathbb{G}} \in \mathbb{G}\) is the empty element \(\emptyset_{\mathcal{D}}\), that is, \(\varphi(\emptyset_{\mathbb{G}}) = \emptyset_{\mathcal{D}}\);
		\item and (2) the total, complete, and null graph \(\mathcal{G}_{t,c}^{0} \in \mathbb{G}\) is the total, complete, and null element \(X_{\mathcal{B}_v}^{c,0}\), that is, \(\varphi(\mathcal{G}_{t,c}^{0}) = X_{\mathcal{B}_v}^{c,0}\).
	\end{enumerate}
	
	   In the case where loops are considered, the total, complete, and null graph with loops \(\mathcal{G}_{t,c}^{l,0} \in \mathbb{G}\) has for image the total, complete, and null element with loops \(X_{\mathcal{B}_v}^{c,l,0}\), that is, \(\varphi(\mathcal{G}_{t,c}^{l,0}) = X_{\mathcal{B}_v}^{c,l,0}\).
\end{prop}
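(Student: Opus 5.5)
The proof is a direct computation from the defining formulas \eqref{mapGtoD}, \eqref{att-cross} and \eqref{weiCross} of $\varphi$. For each distinguished graph we first identify its label sets $\mathcal{I}_G$ and $\mathcal{L}_G$, then read off the basis $B$ with $I(B)=\mathcal{I}_G$ and $D=B\otimes B$. Finally we compute the three components $\mathbf{x}_B$, $\mathbf{w}_D$, $\mathbf{a}_D$ and compare them with Definition~\ref{TotCompNull}.

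\emph{Item (1).} For $\emptyset_{\mathbb{G}}=(\varnothing,\varnothing)$ we have $\mathcal{I}_{\emptyset_{\mathbb{G}}}=\varnothing$, hence $I(B)=\varnothing$ and, since $\mathcal{B}_v$ is linearly independent and indexed by $\mathcal{I}$, $B=\varnothing$. Then $I(D)=\varnothing\times\varnothing=\varnothing$, so $D=\varnothing$. The sums in \eqref{att-cross} and \eqref{weiCross} are empty, and these components lie in $\mathcal{V}(\varnothing)$, $\mathcal{W}(\varnothing)$, $\mathcal{A}(\varnothing)$. As noted after \eqref{attVector} and \eqref{boolVect}, these spaces reduce to the zero-dimensional elements $\emptyset_{\mathcal{V}}$, $\emptyset_{\mathcal{W}}$, $\emptyset_{\mathcal{A}}$. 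Hence $\varphi(\emptyset_{\mathbb{G}})=\emptyset_{\mathcal{D}}$.

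\emph{Item (2).} For $\mathcal{G}_{t,c}^0$, totality (Definition~\ref{totalGraph}) gives $V=\mathbb{V}$, so $\mathcal{I}_G=\mathcal{I}$ and $B=\mathcal{B}_v$; the image is therefore total. The zero property (Definition~\ref{zeroGraph}) gives $x_i=0$ for all $i$ and $w_{p,q}=0$ for all $(p,q)\in\mathcal{L}_G$. Completeness (Definition~\ref{completeGraph}) together with looplessness gives $\mathcal{L}_G=\{(p,q)\in\mathcal{I}^2\mid p\neq q\}$. Consequently $a_{p,q}=1$ for $p\neq q$ and $a_{p,p}=0$, which is exactly the complete condition in Definition~\ref{TotCompNull}.

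The main obstacle sits in item (2). The tensor $\mathbf{w}_D$ is indexed by the whole of $I(D)=\mathcal{I}\times\mathcal{I}$, including diagonal pairs and any pairs outside $\mathcal{L}_G$, yet \eqref{weiCross} only specifies $w_{p,q}$ for existing edges. I would adopt the convention implicit in the adjacency and weight matrices $\mathbf{A}_G$, $\mathbf{W}_G$ of Subsection~\ref{ElemGraphTheory}: absent edges carry weight $0$ and adjacency $0$. Under this convention $\mathbf{w}_D=\mathbf{w}_D^0$ and $\mathbf{x}_B=\mathbf{x}_B^0$, so $\varphi(\mathcal{G}_{t,c}^0)=X_{\mathcal{B}_v}^{c,0}$.

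\emph{Loops.} The same computation applies to $\mathcal{G}_{t,c}^{l,0}$, using Note~\ref{IdWithLoops}: every vertex carries a loop of zero weight. Then $\mathcal{L}_G=\mathcal{I}^2$, so $a_{p,p}=1$ for all $p$, and the image is $X_{\mathcal{B}_v}^{c,l,0}$ by item~4 of Definition~\ref{TotCompNull}. Injectivity from Proposition~\ref{Bijectionphi} is not needed, since only direct images are computed.
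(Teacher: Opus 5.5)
Your proposal is correct and matches the paper's approach: the paper's proof simply says the result follows by applying the definition of $\varphi$ in \eqref{mapGtoD}, and you carry out that computation explicitly for $\emptyset_{\mathbb{G}}$, $\mathcal{G}_{t,c}^0$ and $\mathcal{G}_{t,c}^{l,0}$. The paper leaves implicit that absent pairs (here the diagonal of a loopless graph) get weight $0$ and adjacency $0$, yet this is needed for $\mathbf{w}_D$ to be defined on all of $I(D)=\mathcal{I}_G\times\mathcal{I}_G$, so you were right to state that convention explicitly.
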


\begin{proof}
	The results of this proposition are trivial; they directly follow from the application of the definition of \( \varphi \) in \eqref{mapGtoD}.
\end{proof}

\subsection{\label{AlgVBS}Algebraic Operations in Variable-Basis Spaces}
\noindent In this subsection, the operations corresponding to the additive union $\cup_+$  (Definition~\ref{AddUnionDef}), the additive intersection $\cap_+$  (Definition~\ref{AddInterDef}), and the multiplicative action law $\scal$  (Definition~\ref{ActionLaw}) are determined when the space under consideration  is the variable-basis space $\mathcal{D}_{\mathbb{G}}$.  To identify these operations, $\varphi$ is required to be a semi-linear map  from $\mathbb{G}$ to $\mathcal{D}_{\mathbb{G}}$.

\begin{defn}(Adapted from Def\textbf{.1.22} in \cite{janyvska2007semi})
	\label{semi-linear}
	The map \(\varphi: \mathbb{G} \to \mathcal{D}_{\mathbb{G}}\subset \mathcal{D}\) is said to be semi-linear between semi-vector spaces if, for every operation \(\star \in \{\cup_+, \cap_+\}\) in \(\mathbb{G}\), there exists an operation \(\diamond\) in \(\mathcal{D}_{\mathbb{G}}\) such that for all \(G, H \in \mathbb{G}\) and \(\lambda \in \mathbb{R}\), the following hold:
	\begin{enumerate}
		\item \(\varphi(\lambda \scal G) = \lambda \varphi(G)\);
		\item \(\varphi(G \star H) = \varphi(G) \diamond \varphi(H)\).
	\end{enumerate}   
\end{defn}

The scalar multiplication and the operation $\diamond$ in Definition~\ref{semi-linear}  must be consistent with their counterparts in $\mathbb{G}$.  In this space, scalar multiplication modifies only vertex attributes and edge weights,  while preserving the structure encoded in the adjacency matrix.  Accordingly, the definition of scalar multiplication is as follows.

\begin{defn}
	Scalar multiplication by \(\lambda \in \mathbb{R}\) of an element \((\mathbf{x}_{B}, \mathbf{w}_{D}, \mathbf{a}_{D}) \in \mathcal{D}\) is an external operation, defined as follows:
	\vspace{-2 mm}
	\[
	\lambda (\mathbf{x}_{B}, \mathbf{w}_{D}, \mathbf{a}_{D}) = (\lambda \mathbf{x}_{B}, \lambda \mathbf{w}_{D}, \mathbf{a}_{D}),
	\]
	where the components are defined as follows:
	\vspace{-1 mm}
	\[
	\lambda \mathbf{x}_{B} = \sum_{i \in I(B)} \lambda x_i \mathbf{b}_i, \quad \lambda \mathbf{w}_{D} = \sum_{(p,q) \in I(D)} \lambda w_{p,q} \mathbf{b}_p \otimes \mathbf{b}_q.
	\]
\end{defn}

To determine the operations \(\diamond\) mentioned in Definition \ref{semi-linear}, which are equivalent to \(\star \in \{\cup_+, \cap_+\}\), let us consider two graphs from \(\mathbb{G}\), \(X = (V_X, E_X)\) and \(Y = (V_Y, E_Y)\), whose images under \(\varphi\) are \(\varphi(X) = ( \mathbf{x}_{B},  \mathbf{w}_{D},  \mathbf{a}_{D})\) and \(\varphi(Y) = ( \mathbf{y}_{C},  \mathbf{v}_{L},  \mathbf{e}_{L})\). We seek the operations \(\diamond\)  for all $\star \in \{\cup_+, \cap_+\}$ such that:
\begin{equation}
	\label{Statment-Op-in-V-W-A}
	\varphi(V_X \star V_Y, E_X \star E_Y) \! = \!( \mathbf{x}_{B} \diamond  \mathbf{y}_{C},  \mathbf{w}_{D} \diamond  \mathbf{v}_{L},  \mathbf{a}_{D} \vartriangle  \mathbf{e}_{L}),
\end{equation}
where \(\vartriangle\) is a variant of \(\diamond\) in the Boolean case. In the following, we define operations \(\diamond\) that are equivalent to \(\star \in \{\cup_+, \cap_+\}\) in the real-valued spaces \(\mathcal{V}\) and \(\mathcal{W}\), as well as its variant \(\vartriangle\) in the Boolean-valued space \(\mathcal{A}\).

\subsubsection{\label{RealValueOp}Operations in  Variable-Basis Spaces $\mathcal{V}$ and $\mathcal{W}$}
\noindent In the spaces $\mathcal{V}$ and $\mathcal{W}$, the operations equivalent to $\cup_+$  and $\cap_+$ are denoted by $+_\cup$ and $+_\cap$, respectively.  They are illustrated for the space $\mathcal{V}$ in Fig.~\ref{SommeCross}  and defined as follows.

\begin{defn} The operation \(+_\star\) on \(\mathcal{V}\), with \(\star \in \{\cup, \cap\}\), is a mapping \(+_\star : \mathcal{V} \times \mathcal{V} \to \mathcal{V}\); for any pair \((\mathbf{x}_{B}, \mathbf{y}_{C}) \in \mathcal{V}^2\), where

	\[
	\mathbf{x}_{B} = \sum_{k \in I(B)} x_k \mathbf{b}_k \quad \text{and} \quad \mathbf{y}_{C} = \sum_{l \in I(C)} y_l \mathbf{b}_l,
	\]
	the operation \(+_\star\) is given by: 
	\[ 
	\mathbf{x}_{B} +_{\star} \mathbf{y}_{C} = \mathbf{x}_{B \star C} + \mathbf{y}_{C \star B}, \quad \text{with } \star \in \{\cup, \cap\}, 
	\]
	where \(\mathbf{x}_{B \star C}\) and \(\mathbf{y}_{C \star B}\) denote the projections of \(\mathbf{x}_{B}\) and \(\mathbf{y}_{C}\) onto the basis \(B \star C = C \star B\). More explicitly, \[ \mathbf{x}_{B} +_{\star} \mathbf{y}_{C} = \sum_{p \in I(B \star C)} (x_p + y_p) \mathbf{b}_p, \quad \text{with } \star \in \{\cup, \cap\}. \] If \(\star = \cup\), then for any \(p \in I(B)\) such that \(p \notin I(C)\) (and vice versa), \(y_p = 0\) (or \(x_p = 0\), respectively).
	\end{defn}
	
	\begin{defn}
		The operation \(+_\star\) on \(\mathcal{W}\), with \(\star \in \{\cup, \cap\}\), is a mapping \(+_\star : \mathcal{W} \times \mathcal{W} \to \mathcal{W}\); for any pair \((\mathbf{w}_D, \mathbf{\omega}_L) \in \mathcal{W}^2\), where 
		\vspace{-1 mm}
		\[
		\mathbf{w}_D \!=\!\! \!\!\! \sum_{(k,l) \in I(D)} \!\!\!\!  \!\!  w_{k,l} \mathbf{b}_k \otimes \mathbf{b}_l, \quad \mathbf{\omega}_L \!=\!\! \!\! \!\!\!  \sum_{(m,n) \in I(L)} \!\! \!\!   \omega_{m,n} \mathbf{b}_m \otimes \mathbf{b}_n,
		\]

		the operation \(+_\star\) is given by:

		\[
		\mathbf{w}_D +_\star \mathbf{\omega}_L = \mathbf{w}_{D \star L} + \mathbf{\omega}_{L \star D}, \quad \text{with } \star \in \{\cup, \cap\},
		\]  
		where \(\mathbf{w}_{D \star L}\) and \(\mathbf{\omega}_{L \star D}\) denote the projections of \(\mathbf{w}_D\) and \(\mathbf{\omega}_L\) onto the basis \(D \star L = L \star D\), with \(\star \in \{\cup, \cap\}\). More precisely, this is given by:
	
		\[
		\mathbf{w}_D +_\star \mathbf{\omega}_L = \sum_{(p,q) \in I(D \star L)} (w_{p,q} + \omega_{p,q}) \mathbf{b}_p \otimes \mathbf{b}_q.
		\]  
		If \(\star = \cup\), then for any \((p,q) \in I(D)\) such that \((p,q) \notin I(L)\) (and vice versa),  \(\omega_{p,q} = 0\) (or \(w_{p,q} = 0\), respectively).  
	\end{defn}
	
	\begin{figure}[h]
		\centering
		\begin{subfigure}[b]{.5\textwidth}
			\centering
			  \resizebox{.91\textwidth}{!}{
  
   \begin{tikzpicture}
    \def \dx {26 mm}
    \def \dy {2 mm}
    \def \dh {13 mm}
    \def \dm {3 mm}
    
    \draw[rounded corners=10pt, thick] (-\dx-3.5*\dy,-4.5*\dm) rectangle (-\dx + 8.3*\dh,4.5*\dm);
    \node at (-\dx,0) {
    $\begin{bmatrix}
    x_2 \\
    x_3  \\
    \end{bmatrix}$
    };
    
    \node at (-\dx +\dm, 2. mm)[right]{
        \textcolor{cyan}{ $b_2$}
    };
    \node at (-\dx +\dm, -3.5 mm)[right]{
        \textcolor{cyan}{ $b_3$}
    };
    
    \node at (-\dx +\dh,0) {
      $+_\cup$
    };
    
    
    \node at (-\dx + 1.7*\dh,0) {
    $\begin{bmatrix}
    y_1 \\
    y_2  \\
    y_3  \\
    y_4
    \end{bmatrix}$
    };
    
    \node at (-\dx + 1.7*\dh+\dm, 7.9 mm)[right]{
        \textcolor{brown}{ $b_1$}
    };
    
    \node at (-\dx + 1.7*\dh+\dm, 2. mm)[right]{
        \textcolor{cyan}{ $b_2$}
    };
    
    \node at (-\dx + 1.7*\dh+\dm, -3.5 mm)[right]{
        \textcolor{cyan}{ $b_3$}
    };
    
    \node at (-\dx + 1.7*\dh+\dm, -9.3 mm)[right]{
        \textcolor{brown}{ $b_4$}
    };
    
    \node at (-\dx + 2.8*\dh,0){
      $=$
    };
    
    \node at (-\dx + 3.6*\dh,0) {
    $\begin{bmatrix}
    0   \\
    x_2 \\
    x_3  \\
    0
    \end{bmatrix}$
    };
    
    \node at (-\dx + 3.6*\dh+\dm, 7.9  mm)[right]{
        \textcolor{cyan}{ $b_1$}
    };
    
    \node at (-\dx + 3.6*\dh+\dm, 2. mm)[right]{
        \textcolor{cyan}{ $b_2$}
    };
    
    \node at (-\dx + 3.6*\dh+\dm, -3.5 mm)[right]{
        \textcolor{cyan}{ $b_3$}
    };
    
    \node at (-\dx + 3.6*\dh+\dm, -9.3mm)[right]{
        \textcolor{cyan}{ $b_4$}
    };
    
    \node at (-\dx +4.6*\dh,0) {
      $+$
    };
    
    
    \node at (-\dx + 5.4*\dh,0) {
    $\begin{bmatrix}
    y_1 \\
    y_2  \\
    y_3  \\
    y_4
    \end{bmatrix}$
    };
    
    \node at (-\dx + 5.4*\dh+\dm, 7.9 mm)[right]{
        \textcolor{cyan}{ $b_1$}
    };
    
    \node at (-\dx + 5.4*\dh+\dm, 2. mm)[right]{
        \textcolor{cyan}{ $b_2$}
    };
    
    \node at (-\dx + 5.4*\dh+\dm, -3.5 mm)[right]{
        \textcolor{cyan}{ $b_3$}
    };
    
    \node at (-\dx + 5.4*\dh+\dm, -9.3 mm)[right]{
        \textcolor{cyan}{ $b_4$}
    };
    
    \node at (-\dx + 6.4*\dh,0){
      $=$
    };
    
     \node at (-\dx + 7.4*\dh,0) {
    $\begin{bmatrix}
    y_1 \\
    x_2 + y_2  \\
    x_3 + y_3  \\
    y_4
    \end{bmatrix}$
    };
     
  \end{tikzpicture}
  
  }
			\caption{\label{SommeUnion}}
		\end{subfigure}
		
		\begin{subfigure}[b]{.5\textwidth}
			\centering
			\resizebox{.91\textwidth}{!}{
  
   \begin{tikzpicture}
    \def \dx {26 mm}
    \def \dy {2 mm}
    \def \dh {13 mm}
    \def \dm {3 mm}
    
    \draw[rounded corners=10pt, thick] (-\dx-3.5*\dy,-4.5*\dm) rectangle (-\dx + 8.3*\dh,4.5*\dm);
    
    \node at (-\dx,0) {
    $\begin{bmatrix}
    x_2 \\
    x_3  
    \end{bmatrix}$
    };
    
    \node at (-\dx +\dm, 2. mm)[right]{
        \textcolor{cyan}{ $b_2$}
    };
    \node at (-\dx +\dm, -3.5 mm)[right]{
        \textcolor{cyan}{ $b_3$}
    };
    
    \node at (-\dx +\dh,0) {
      $+_\cap$
    };
    
    
    \node at (-\dx + 1.7*\dh,0) {
    $\begin{bmatrix}
    y_1 \\
    y_2  \\
    y_3  \\
    y_4
    \end{bmatrix}$
    };
    
    \node at (-\dx + 1.7*\dh+\dm, 7.9 mm)[right]{
        \textcolor{brown}{ $b_1$}
    };
    
    \node at (-\dx + 1.7*\dh+\dm, 2. mm)[right]{
        \textcolor{cyan}{ $b_2$}
    };
    
    \node at (-\dx + 1.7*\dh+\dm, -3.5 mm)[right]{
        \textcolor{cyan}{ $b_3$}
    };
    
    \node at (-\dx + 1.7*\dh+\dm, -9.3 mm)[right]{
        \textcolor{brown}{ $b_4$}
    };
    
    \node at (-\dx + 2.8*\dh,0){
      $=$
    };
    
    \node at (-\dx + 3.6*\dh,0) {
    $\begin{bmatrix}
    x_2 \\
    x_3  
    \end{bmatrix}$
    };
    
    \node at (-\dx + 3.6*\dh+\dm, 2. mm)[right]{
        \textcolor{cyan}{ $b_2$}
    };
    
    \node at (-\dx + 3.6*\dh+\dm, -3.5 mm)[right]{
        \textcolor{cyan}{ $b_3$}
    };

    \node at (-\dx +4.6*\dh,0) {
      $+$
    };
    
    
    \node at (-\dx + 5.4*\dh,0) {
    $\begin{bmatrix}
    y_2  \\
    y_3  
    \end{bmatrix}$
    };

    \node at (-\dx + 5.4*\dh+\dm, 2. mm)[right]{
        \textcolor{cyan}{ $b_2$}
    };
    
    \node at (-\dx + 5.4*\dh+\dm, -3.5 mm)[right]{
        \textcolor{cyan}{ $b_3$}
    };
    
    \node at (-\dx + 6.4*\dh,0){
      $=$
    };
    
     \node at (-\dx + 7.4*\dh,0) {
    $\begin{bmatrix}
    x_2 + y_2  \\
    x_3 + y_3  
    \end{bmatrix}$
    };
     
  \end{tikzpicture}
  
  }
			\caption{\label{SommInter}}
		\end{subfigure}
		
		\caption{\label{SommeCross}Examples illustrating sums over (a) the unified basis and (b) the maximal common basis of the operands in the variable-basis real-valued space of states \(\mathcal{V}\).}
	\end{figure}
	
	\subsubsection{\label{BoolValueOp}Operations in the Variable-Basis  Space $\mathcal{A}$}
	In the Boolean-valued space \(\mathcal{A}\), the variant \(\vartriangle\) of \(\diamond\) in equation \eqref{Statment-Op-in-V-W-A} must satisfy the axioms of Boolean algebra. To achieve this, the overlapping operations \(\cup_+\) and \(\cap_+\) induce the corresponding operations \(\vee_\cup\) and \(\wedge_\cap\) in \(\mathcal{A}\), where \(\vee\) and \(\wedge\) denote the disjunction and conjunction operations, respectively. These operations follow the fundamental axioms of Boolean algebra \cite{halmos2009introduction}, with the most relevant ones for this paper given as follows: for all \(p \in \{0,1\}\),  
	\[
	p \vee 1 = 1, \quad p \vee 0 = p, \quad p \wedge 1 = p, \quad p \wedge 0 = 0.
	\]  
	Additionally, both disjunction (\(\vee\)) and conjunction (\(\wedge\)) are commutative operations.
	
	\begin{defn}
		The operation \(\star \in \{\vee_\cup, \wedge_\cap\}\) on \(\mathcal{A}\) is a mapping \(\star : \mathcal{A} \times \mathcal{A} \to \mathcal{A}\); for any \((\boldsymbol{\gamma}_{D}, \boldsymbol{\delta}_{L} )\in \mathcal{A}^2\), such that  
		\vspace{-2 mm}\[
		\boldsymbol{\gamma}_{D} = \sum_{(i,j) \in I(D)} \gamma_{i,j} \mathbf{b}_i \otimes \mathbf{b}_j, \quad \boldsymbol{\delta}_{L} = \sum_{(k,l) \in I(L)} \delta_{k,l} \mathbf{b}_k \otimes \mathbf{b}_l,
		\]  
		
		\begin{itemize}  
			\item the disjunction $(\vee_\cup)$ is given by:  
			\[
			\boldsymbol{\gamma}_{D} \vee_\cup \boldsymbol{\delta}_{L} = \sum_{(p,q) \in I(D \cup L)} (\gamma_{p,q} \vee \delta_{p,q}) \mathbf{b}_p \otimes \mathbf{b}_q.
			\]  
			When, for any \( (p,q) \in I(D) \) such that \( (p,q) \notin I(L) \) (and vice versa), \( \delta_{p,q} = 0 \) (or \( \gamma_{p,q} = 0 \), respectively).
			
			\vspace{1 mm}
			
			\item the conjunction $(\wedge_\cap)$ is given by: 
			\[
			\boldsymbol{\gamma}_{D} \wedge_\cap \boldsymbol{\delta}_{L} = \sum_{(p,q) \in I(D \cap L)} (\gamma_{p,q} \wedge \delta_{p,q}) \mathbf{b}_p \otimes \mathbf{b}_q.
			\]  
		\end{itemize} 
	\end{defn}
	
	\subsection{\label{AlgbStruc}Algebraic Structures in the Variable-Basis Space}
	\noindent Quite naturally, through the isomorphism \(\varphi\) (see equation \eqref{mapGtoD}, Proposition \ref{Bijectionphi}, and Definition \ref{semi-linear}), the space \(\mathcal{D}_{\mathbb{G}}\) inherits the properties of the space \(\mathbb{G}\) (see Subsection \ref{MainResult}). Therefore, we establish the following corollaries:
	
	\begin{cor}
		\label{FreedimavecCup}
		The structure \((\mathcal{D}_{\mathbb{G}}, +_\cup, \emptyset_{\mathcal{D}},\scal)\) forms a simple semi-vector space over the field \(\mathbb{R}\).
	\end{cor}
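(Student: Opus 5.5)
The plan is to transport the structure of Theorem~\ref{GraphSetStructureU+} (extended to $\mathbb{R}$ via Note~\ref{Semi-vector-Note}) along the bijection $\varphi$ of Proposition~\ref{Bijectionphi}. The decisive step is to show that $\varphi$ is semi-linear in the sense of Definition~\ref{semi-linear}. The scalar and union operations on $\mathcal{D}_{\mathbb{G}}$ are taken componentwise: $+_\cup$ on the $\mathcal{V}$- and $\mathcal{W}$-components and $\vee_\cup$ on the $\mathcal{A}$-component.

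\textbf{Scalar multiplication.} For $G=(V_G,E_G)$ and $\lambda\in\mathbb{R}$, Definition~\ref{ActionLaw} multiplies every $x_i$ and $w_{p,q}$ by $\lambda$ and leaves $\mathcal{I}_G$, $\mathcal{L}_G$ unchanged. Hence
\[
\varphi(\lambda\scal G)=(\lambda\mathbf{x}_B,\lambda\mathbf{w}_D,\mathbf{a}_D)=\lambda\varphi(G).
\]

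\textbf{Union.} Let $\varphi(X)=(\mathbf{x}_B,\mathbf{w}_D,\mathbf{a}_D)$ and $\varphi(Y)=(\mathbf{y}_C,\mathbf{v}_L,\mathbf{e}_L)$, with $D=B\otimes B$ and $L=C\otimes C$. I must verify \eqref{Statment-Op-in-V-W-A} with $\diamond=+_\cup$ and $\vartriangle=\vee_\cup$.

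On vertices, $I(B\cup C)=\mathcal{I}_X\cup\mathcal{I}_Y$, and the convention that missing coordinates are $0$ reproduces exactly the three cases of Definition~\ref{AddUnionDef}:
\begin{itemize}
\item $x_i+y_i$ on the overlap;
\item $x_i$ on $\mathcal{I}_X$ only;
\item $y_i$ on $\mathcal{I}_Y$ only.
\end{itemize}
So $\mathbf{x}_B+_\cup\mathbf{y}_C$ is the attribute vector of $X\cup_+Y$.

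On edges, weights on $\mathcal{L}_X\cap\mathcal{L}_Y$ add. Edges present in only one graph keep their weight. Label pairs in $(B\cup C)\otimes(B\cup C)$ lying in neither $\mathcal{L}_X$ nor $\mathcal{L}_Y$ receive weight $0$ and adjacency $0\vee0=0$, which is consistent with the zero-padding in $\varphi$. Adjacency combines as $a_{p,q}\vee e_{p,q}$, matching topological union.

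\textbf{Main obstacle.} The delicate point is the basis on the edge side. The $\mathcal{W}$- and $\mathcal{A}$-bases of the result are $D\cup L=(B\otimes B)\cup(C\otimes C)$. This is in general strictly smaller than $(B\cup C)\otimes(B\cup C)$, since it omits the cross pairs $(p,q)$ with $p\in B\setminus C$ and $q\in C\setminus B$. So $\varphi(X)+_\cup\varphi(Y)$ as literally defined need not lie in $\mathcal{D}_{\mathbb{G}}$.

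I would handle this by reading $+_\cup$ on $\mathcal{D}_{\mathbb{G}}$ as the sum over the enlarged basis $(B\cup C)\otimes(B\cup C)$. The cross entries get zero weight and zero adjacency, which is exactly what $\varphi(X\cup_+Y)$ assigns them. This extension is harmless because those coordinates carry no edges. It also gives closure, since $\mathcal{D}_{\mathbb{G}}=\varphi(\mathbb{G})$.

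\textbf{Transport of the axioms.} Once semi-linearity is established, every axiom of Definition~\ref{semi-vector-space} transfers. For instance,
\[
\varphi(X)+_\cup(\varphi(Y)+_\cup\varphi(Z))=\varphi(X\cup_+(Y\cup_+Z)),
\]
and Proposition~\ref{PropAddUnion} together with injectivity of $\varphi$ then yields associativity. Commutativity and the scalar axioms transfer in the same way.

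The identity follows from Proposition~\ref{AddUnionIdentity} together with $\varphi(\emptyset_{\mathbb{G}})=\emptyset_{\mathcal{D}}$. Absence of inverses other than $\emptyset_{\mathcal{D}}$ comes from Proposition~\ref{Symetric+U}: if $\varphi(G)+_\cup\varphi(H)=\emptyset_{\mathcal{D}}$, then $\varphi(G\cup_+H)=\varphi(\emptyset_{\mathbb{G}})$, so $G\cup_+H=\emptyset_{\mathbb{G}}$. Completeness and simplicity over $\mathbb{R}$ follow.
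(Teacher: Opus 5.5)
Your proposal is correct and follows the paper's route. The paper's proof cites the fact that the image of a semi-vector space under a semi-linear map is again a semi-vector space (Proposition 3.4 of the cited reference), treating semi-linearity of $\varphi$ as built into the definition of the operations on $\mathcal{D}_{\mathbb{G}}$; your axiom-by-axiom transport along the bijection $\varphi$ is that argument made explicit.

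Your version is more careful than the paper's in two places. First, taken literally, the componentwise $+_\cup$ places the edge components on $(B\otimes B)\cup(C\otimes C)$ rather than $(B\cup C)\otimes(B\cup C)$. So the zero-padded reading you adopt is genuinely needed both for $\varphi(X)+_\cup\varphi(Y)=\varphi(X\cup_+Y)$ and for closure in $\mathcal{D}_{\mathbb{G}}$. Second, you rightly invoke injectivity of $\varphi$, which the image result alone does not supply, to transfer the absence of inverses other than $\emptyset_{\mathcal{D}}$.
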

	
	\begin{cor}
		\label{FreedimavecCap}
		The structure \((\mathcal{D}_{\mathbb{G}}, +_\cap, X^{c,0}_{\mathcal{B}_v},\scal)\) forms a simple semi-vector space over the field \(\mathbb{R}\).
	\end{cor}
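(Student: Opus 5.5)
The plan is to transport Theorem~\ref{GraphSetStructuren+} to $\mathcal{D}_{\mathbb{G}}$ through $\varphi$. This works once we know that $\varphi$ is a semi-linear bijection for the pair $(\cap_+, +_\cap)$. The componentwise operation is $(\mathbf{x}_B,\mathbf{w}_D,\mathbf{a}_D)+_\cap(\mathbf{y}_C,\mathbf{v}_L,\mathbf{e}_L) := (\mathbf{x}_B+_\cap\mathbf{y}_C,\ \mathbf{w}_D+_\cap\mathbf{v}_L,\ \mathbf{a}_D\wedge_\cap\mathbf{e}_L)$.

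\textbf{Step 1: compatibility of $\varphi$ with the operations.} By Proposition~\ref{Bijectionphi}, $\varphi$ is a bijection onto $\mathcal{D}_{\mathbb{G}}$, so only the two identities of Definition~\ref{semi-linear} need checking.

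For scalars: $\lambda\scal G$ rescales the attributes and weights and leaves $\mathcal{I}_G$ and $\mathcal{L}_G$ unchanged. This is exactly the scalar multiplication on $\mathcal{D}$, which rescales $\mathbf{x}_B$ and $\mathbf{w}_D$ and keeps $\mathbf{a}_D$. Hence $\varphi(\lambda\scal G)=\lambda\varphi(G)$.

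For $\cap_+$: let $\varphi(X)$ live on $B$ and $\varphi(Y)$ on $C$, with $I(B)=\mathcal{I}_X$ and $I(C)=\mathcal{I}_Y$.
\begin{itemize}
\item Vertex part: Definition~\ref{AddInterDef} gives attributes $x_i+y_i$ on $\mathcal{I}_X\cap\mathcal{I}_Y=I(B\cap C)$, which is precisely $\mathbf{x}_B+_\cap\mathbf{y}_C$.
\item Edge part: the basis is $(B\cap C)\otimes(B\cap C)=(B\otimes B)\cap(C\otimes C)$, so $\mathcal{D}_{\mathbb{G}}$ is closed under the operation.
\item Adjacency: $a_{p,q}\wedge e_{p,q}=1$ exactly when $(p,q)\in\mathcal{L}_X\cap\mathcal{L}_Y$, which matches the edge set of $X\cap_+Y$.
\item Weights: on the common support the weights add. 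Off $\mathcal{L}_X\cap\mathcal{L}_Y$ the adjacency entry is $0$, so the weight coefficient there is immaterial. We adopt the convention that the tensor $\mathbf{w}$ carries $0$ at non-edges; this is the same convention already used for $+_\cup$.
\end{itemize}
Hence $\varphi(X\cap_+Y)=\varphi(X)+_\cap\varphi(Y)$.

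\textbf{Step 2: transport of the axioms.} Every element of $\mathcal{D}_{\mathbb{G}}$ has the form $\varphi(G)$. Each axiom of Definition~\ref{semi-vector-space} then follows by pulling back, applying Proposition~\ref{PropAddInter} or Definition~\ref{ActionLaw}, and pushing forward. For example, associativity:
\[
\varphi(X)+_\cap(\varphi(Y)+_\cap\varphi(Z))=\varphi(X\cap_+(Y\cap_+Z))=\varphi((X\cap_+Y)\cap_+Z),
\]
and the right-hand side equals $(\varphi(X)+_\cap\varphi(Y))+_\cap\varphi(Z)$. Commutativity and the four scalar axioms are handled in the same way. The Note~\ref{Semi-vector-Note} extension from $\mathbb{R}_0^+$ to $\mathbb{R}$ carries over verbatim, since Step 1 holds for every $\lambda\in\mathbb{R}$.

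\textbf{Step 3: identity element and simplicity.} By Proposition~\ref{IdAddInter}, $\mathcal{G}_{t,c}^0$ is the identity of $(\mathbb{G},\cap_+)$. The preceding proposition gives $\varphi(\mathcal{G}_{t,c}^0)=X^{c,0}_{\mathcal{B}_v}$, so
\[
\varphi(G)+_\cap X^{c,0}_{\mathcal{B}_v}=\varphi(G\cap_+\mathcal{G}_{t,c}^0)=\varphi(G),
\]
and the space is complete. Invertibility is also preserved by a bijective homomorphism: $\varphi(G)$ is invertible in $\mathcal{D}_{\mathbb{G}}$ if and only if $G$ is invertible in $\mathbb{G}$. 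Proposition~\ref{SymInter+} and the argument concluding Theorem~\ref{GraphSetStructuren+} then transfer directly, which gives the simple structure.

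\textbf{Expected obstacle.} The main difficulty is Step 1 for the weight tensor. The definition of $+_\cap$ on $\mathcal{W}$ sums over all of $I(D\cap L)$, whereas $\cap_+$ sums only over $\mathcal{L}_X\cap\mathcal{L}_Y$, so the two agree only under the zero-weight convention at non-edges. I would state this convention explicitly, or define weights on $D$ as masked by $\mathbf{a}_D$. I would also check that the convention is consistent with the Note distinguishing zero weights from absent edges: that distinction is carried by $\mathbf{a}_D$, not by $\mathbf{w}_D$.
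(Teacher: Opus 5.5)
Your route is the paper's: transport $(\mathbb{G},\cap_+,\scal)$ along $\varphi$. The paper only cites Proposition 3.4 of \cite{la2021semi} and never checks that $\varphi$ is semi-linear for the pair $(\cap_+,+_\cap)$. Your Step 1 does check it, and the weight component fails under the convention you adopt.

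Take $X$ on labels $\{1,2\}$ with the single edge $(1,2)$ of weight $5$, and $Y$ on $\{1,2\}$ with no edge. Then $X\cap_+Y$ has no edge $(1,2)$, so $\varphi(X\cap_+Y)$ has coefficient $0$ on $\mathbf{b}_1\otimes\mathbf{b}_2$. By contrast, $\mathbf{w}_D+_\cap\mathbf{v}_L$ has $5+0=5$ there, with adjacency $1\wedge 0=0$. So $\varphi(X\cap_+Y)\neq\varphi(X)+_\cap\varphi(Y)$. Worse, the right-hand side violates the zero-at-non-edges convention, so it is not in $\varphi(\mathbb{G})$: with the literal $+_\cap$, $\mathcal{D}_{\mathbb{G}}$ is not even closed.

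The convention suffices for $+_\cup$ because an edge present in only one operand survives a union. $\cap_+$ deletes such an edge, and adding a zero partner does not delete its weight. Calling the coefficient ``immaterial'' amounts to working modulo weight entries at zero adjacency, and then the sum must be renormalized. Concretely, the weight part of $+_\cap$ must be $\sum_{(p,q)\in I(D\cap L)}(a_{p,q}\wedge e_{p,q})(w_{p,q}+v_{p,q})\,\mathbf{b}_p\otimes\mathbf{b}_q$, which is your second option made precise. With this masked operation, $\varphi$ is a semi-linear bijection onto a $+_\cap$-closed $\mathcal{D}_{\mathbb{G}}$, and Step 2 and the identity part of Step 3 go through.

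The simplicity conclusion in Step 3 does not follow, and no repair of Step 1 rescues it. Transport does preserve invertibility, but Proposition~\ref{SymInter+} says every total complete graph $G$ is invertible, since $G\cap_+((-1)\scal G)=\mathcal{G}_{t,c}^0$. Hence every total complete element $X^c_{\mathcal{B}_v}$, with arbitrary attributes and weights, is invertible in $(\mathcal{D}_{\mathbb{G}},+_\cap)$ with inverse $(-1)X^c_{\mathcal{B}_v}$. Such an element differs from $X^{c,0}_{\mathcal{B}_v}$ as soon as one coordinate is nonzero.

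The identity is always invertible, so ``simple'' can only mean that the identity is the sole invertible element. This is how Proposition~\ref{Symetric+U} is used for Theorem~\ref{GraphSetStructureU+}, and in that sense the structure is not simple. The concluding paragraph of Subsection~\ref{DemTheo2} asserts simplicity despite Proposition~\ref{SymInter+}, and your Step 3 inherits that inconsistency. What your argument actually establishes is a complete semi-vector space over $\mathbb{R}$ whose invertible elements are exactly $\varphi(\mathcal{G}_{t,c})$.
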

	
	\begin{proof}
		The proofs of corollaries \ref{FreedimavecCup} and \ref{FreedimavecCap} rely on Proposition \textbf{3.4} from \cite{la2021semi}, which states that the image of a semi-vector space under a semi-linear mapping is itself a semi-vector subspace. Specifically, \((\mathcal{D}_{\mathbb{G}}, +_\cup, \emptyset_{\mathcal{D}})\) and \((\mathcal{D}_{\mathbb{G}}, +_\cap, X^{c,0}_{\mathcal{B}_v})\) are the images of the semi-vector spaces \((\mathbb{G}, \cup_+, \emptyset_{\mathbb{G}})\) and \((\mathbb{G}, \cap_+, \mathcal{G}_{t,c}^0)\), respectively, under \(\varphi\). 
	\end{proof}
	
	\begin{cor}
		\label{IsoStruct}
		The structure \((\mathcal{D}_{\mathbb{G}}, +_\cup, \emptyset_{\mathcal{D}}, +_\cap, X^{c,0}_{\mathcal{B}_v},\scal)\) is isomorphic to \((\mathbb{G}, \cup_+, \emptyset_{\mathbb{G}}, \cap_+, \mathcal{G}_{t,c}^0,\scal)\).
	\end{cor}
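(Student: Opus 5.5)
The plan is to show that $\varphi$ itself is the required isomorphism. By Proposition~\ref{Bijectionphi} it is already a bijection from $\mathbb{G}$ onto $\mathcal{D}_{\mathbb{G}}$, so what remains is to check four things. First, $\varphi$ intertwines $\cup_+$ with the triple of operations $(+_\cup,+_\cup,\vee_\cup)$. Second, it intertwines $\cap_+$ with $(+_\cap,+_\cap,\wedge_\cap)$. Third, it commutes with the scalar law $\scal$. Fourth, it sends the distinguished elements to one another. Once these hold, the inverse $\varphi^{-1}$ automatically transports the operations back, because a bijective homomorphism has a homomorphic inverse: $\varphi^{-1}(u\diamond v)=\varphi^{-1}(\varphi(G)\diamond\varphi(H))=\varphi^{-1}(\varphi(G\star H))=G\star H$.

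\textbf{Scalars and distinguished elements.} For $\lambda\in\mathbb{R}$, Definition~\ref{ActionLaw} multiplies only the attributes and weights and leaves $\mathcal{I}_G$ and $\mathcal{L}_G$ unchanged. Hence $\varphi(\lambda\scal G)=(\lambda\mathbf{x}_B,\lambda\mathbf{w}_D,\mathbf{a}_D)=\lambda\varphi(G)$. The identities $\varphi(\emptyset_{\mathbb{G}})=\emptyset_{\mathcal{D}}$ and $\varphi(\mathcal{G}^0_{t,c})=X^{c,0}_{\mathcal{B}_v}$ are exactly the proposition preceding Subsection~\ref{AlgVBS}.

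\textbf{Internal laws.} Take $X,Y$ with $\varphi(X)=(\mathbf{x}_B,\mathbf{w}_D,\mathbf{a}_D)$ and $\varphi(Y)=(\mathbf{y}_C,\mathbf{v}_L,\mathbf{e}_L)$, where $D=B\otimes B$ and $L=C\otimes C$. For the vertex component, $I(B)=\mathcal{I}_X$ and $I(C)=\mathcal{I}_Y$. Then $\mathbf{x}_B+_\cup\mathbf{y}_C$ has coefficient $x_p+y_p$ on $I(B)\cap I(C)$, coefficient $x_p$ on $I(B)\setminus I(C)$ and coefficient $y_p$ on $I(C)\setminus I(B)$, which is exactly the attribute list of $V_X\cup_+V_Y$ from Definition~\ref{AddUnionDef}. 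The $\cap$ case is immediate from Definition~\ref{AddInterDef}.

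For the edge components, the key point is that $\varphi$ encodes edges on the full basis $I(\mathcal{I}_G\times\mathcal{I}_G)$: existing edges are marked by $a_{p,q}=1$, and non-edges carry $a_{p,q}=0$ and, by the convention implicit in \eqref{weiCross}, $w_{p,q}=0$. I would check the union case by splitting $(p,q)\in(\mathcal{I}_X\cup\mathcal{I}_Y)^2$ into three cases.
\begin{enumerate}
\item If $(p,q)$ lies in both $D$ and $L$, the weight is $w_{p,q}+v_{p,q}$ and the adjacency entry is $a_{p,q}\vee e_{p,q}$. This equals $1$ iff $(p,q)\in\mathcal{L}_X\cup\mathcal{L}_Y$, and the summed weight matches Definition~\ref{AddUnionDef} because the absent side contributes $0$.
\item If $(p,q)$ lies in only one of $D,L$, the zero-padding conventions of $+_\cup$ and $\vee_\cup$ give the correct value.
\item The remaining case is $(p,q)\in(D\cup L)$ with $(p,q)\notin (B\cup C)\otimes(B\cup C)$. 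Since $D\cup L\subseteq(B\cup C)\otimes(B\cup C)$, this set is empty. However, $(B\cup C)\otimes(B\cup C)$ can contain mixed pairs $(p,q)$ with $p\in I(B)\setminus I(C)$ and $q\in I(C)\setminus I(B)$, which belong to neither $D$ nor $L$.
\end{enumerate}

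\textbf{Main obstacle.} Because of these mixed pairs, $D\cup L\neq (B\cup C)\otimes(B\cup C)$ in general, so the raw result $\varphi(X)+_\cup\varphi(Y)$ need not lie in $\mathcal{D}_{\mathbb{G}}$. I would handle this by reading $+_\cup$ on $\mathcal{D}_{\mathbb{G}}$ as acting on the tensor basis $(B\cup C)\otimes(B\cup C)$, filling the mixed pairs with weight $0$ and adjacency $0$ via the stated zero-padding convention. With that reading, the result equals $\varphi(X\cup_+Y)$. For $\cap$ no such issue arises, since $(B\otimes B)\cap(C\otimes C)=(B\cap C)\otimes(B\cap C)$. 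The adjacency entry is then $a\wedge e$, which equals $1$ iff $(p,q)\in\mathcal{L}_X\cap\mathcal{L}_Y$, and the weights add, matching Definition~\ref{AddInterDef}.

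Having verified homomorphism for both internal laws and for scalars, together with the correspondence of identities, $\varphi$ is a bijective semi-linear map in the sense of Definition~\ref{semi-linear}. It therefore gives an isomorphism of the full structures, which is consistent with Corollaries~\ref{FreedimavecCup} and \ref{FreedimavecCap}.
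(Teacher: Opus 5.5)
Your overall strategy is the paper's. There the corollary is justified in one line: $\varphi$ is semi-linear, because the operations on $\mathcal{D}_{\mathbb{G}}$ were introduced precisely so that this holds, and it is bijective by Proposition~\ref{Bijectionphi}. Your explicit check goes further than the paper, and your point about mixed pairs is correct. Literally, $\mathbf{w}_D+_\cup\mathbf{v}_L$ lives on $D\cup L$, which is not of the form $B'\otimes B'$ when neither of $B,C$ contains the other. So the union step does need the zero-padding on $(B\cup C)\otimes(B\cup C)$ that you propose.

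The step that fails is the intersection, which you call unproblematic. Take $p,q\in\mathcal{I}_X\cap\mathcal{I}_Y$ with $(p,q)\in\mathcal{L}_X\setminus\mathcal{L}_Y$ and $w_{p,q}\neq 0$. You adopted the convention that non-edges carry weight $0$, and you need it for $\varphi$ to be a well-defined injective map. Under it, $v_{p,q}=0$ and $e_{p,q}=0$. So $\varphi(X)+_\cap\varphi(Y)$ has weight coordinate $w_{p,q}+0=w_{p,q}\neq 0$ together with adjacency $a_{p,q}\wedge e_{p,q}=0$.

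In $X\cap_+Y$, however, $(p,q)$ is not an edge, so $\varphi(X\cap_+Y)$ carries weight $0$ there. Hence the componentwise triple $(+_\cap,+_\cap,\wedge_\cap)$ does not intertwine $\cap_+$. Its output is not even in $\mathcal{D}_{\mathbb{G}}=\varphi(\mathbb{G})$, since it has a non-edge with nonzero weight. The union escapes this only because a nonzero summed weight forces one operand to have the edge, so $\vee$ returns $1$.

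There are two ways to repair this.
\begin{enumerate}
\item Mask the weight component of the intersection law on $\mathcal{D}_{\mathbb{G}}$ by the Boolean component, taking coefficient $(w_{p,q}+v_{p,q})(a_{p,q}\wedge e_{p,q})$ on $(B\cap C)\otimes(B\cap C)$.
\item More simply, define both laws on $\mathcal{D}_{\mathbb{G}}$ by transport, $u\diamond v:=\varphi(\varphi^{-1}(u)\star\varphi^{-1}(v))$, which is what the paper implicitly does.
\end{enumerate}
With the second option the homomorphism property becomes tautological. The corollary then reduces to bijectivity of $\varphi$ plus the correspondence of the neutral elements and of $\scal$.
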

	
	\begin{proof}
		Crollary \ref{IsoStruct} is based on the fact that the semi-linear map \(\varphi\) is an isomorphism between \(\mathbb{G}\) and \(\mathcal{D}_\mathbb{G}\), as it is bijective (see Proposition \ref{Bijectionphi}).
	\end{proof}
	
	In the case where loops are included, it can be stated that \((\mathcal{D}_{\mathbb{G}}, +_\cup, \emptyset_{\mathcal{D}}, +_\cap, X^{c,l,0}_{\mathcal{B}_v})\) and \((\mathbb{G}, \cup_+, \emptyset_{\mathbb{G}}, \cap_+, \mathcal{G}_{t,c}^{l,0})\) are isomorphic. As additional results, we also have that the sets \(\mathcal{V}\) and \(\mathcal{W}\), endowed with either \(+_\cup\) or \(+_\cap\), are semi-vector spaces over \(\mathbb{R}\).

\section{\label{HybDynModel}Hybrid Dynamics Model}
\noindent The modeling of systems whose state is described by a graph naturally falls within the framework of hybrid dynamical systems, as these systems combine continuous and discrete dynamics (cf. Fig.~\ref{GrapDynam}). Among the various existing formalisms (hybrid automata, impulsive differential equations, switching systems), we adopt the one proposed by Goebel et al. \cite{goebel2012hybrid} and formulate this model in the variable-basis space $\mathcal{D}_{\mathbb{G}}$.

\subsection{Basic Definitions and Notation}
\noindent Since $\mathcal{D}_{\mathbb{G}}$ is a newly introduced space here, we define and assign notations to the concepts of hybrid system state, hybrid arc, and hybrid disturbance over the following hybrid time domain:

\begin{defn}
	A subset $\mathcal{T} \subset \mathbb{R}_{\ge 0} \times \mathbb{N}$ is a compact hybrid time domain if

	\[
	\mathcal{T} := \bigcup_{k = 0}^{K-1} \big([\tau_k, \tau_{k+1}], k\big)
	\]
	for a finite sequence of times $0=\tau_0 \le \tau_1 \le \dots \le \tau_K$.  The subset $\mathcal{T} \subset \mathbb{R}_{\ge 0} \times \mathbb{N}$ is a hybrid time domain if it is the union of a nondecreasing sequence of compact hybrid time domains, that is, if $\mathcal{T}$ is the union of compact hybrid time domains $\mathcal{T}_k$ satisfying:  
	\[
	\mathcal{T}_0 \subset \mathcal{T}_1 \subset \dots \subset \mathcal{T}_k \subset \dots
	\]
\end{defn}

\begin{defn}
	\label{StateHyb}
	The state of the hybrid system in $\mathcal{D}_{\mathbb{G}}$ is denoted by $X_{B(k)}(t, k)$ and is defined as
	\[
	X_{B(k)}(t, k) = (\mathbf{x}_{B(k)}(t, k), \mathbf{w}_{D(k)}(t, k), \mathbf{a}_{D(k)}(t,k)),
	\]
	where \(B(k) \subseteq \mathcal{B}_v\) and \(D(k) = B(k) \otimes B(k)\) are the bases associated with the \(k\)-th state jump. For simplicity, we denote the state as
	\[
	X_{B_k}(t) := X_{B(k)}(t, k),
	\]
	with components denoted by $\mathbf{x}_{B_k}(t) := \mathbf{x}_{B(k)}(t, k)$, $\mathbf{w}_{D_k}(t) := \mathbf{w}_{D(k)}(t, k)$, and $\mathbf{a}_{D_k}(t) := \mathbf{a}_{D(k)}(t,k)$.
\end{defn}

\begin{defn}
	\label{ArcHybrid}
	A function $\phi: \mathcal{T} \to \mathcal{D}_\mathbb{G}$ is a \emph{hybrid arc} if, for each $k \in \mathbb{N}$, the function $t \mapsto \phi(t, k)$ is absolutely continuous on the interval $\Theta_k := \{ t \mid (t,k) \in \mathcal{T}\}.$ For all $(t, k) \in \mathcal{T}$, there exists $Y_{B_k}(t) \in \mathcal{D}_{\mathbb{G}}$ such that  $Y_{B_k}(t) = \phi(t, k)$.
\end{defn}

\begin{defn}
	\label{HybridDisturbance}
	The hybrid disturbance is a mapping $\mu~:~\mathcal{T}\to\mathbb{R}^m\times~\mathcal{D}_{\mathbb{G}}$, such that

	\[
	\mu(t,k) = (\mathbf{u}(t), U_{C_k}(t)) \in \mathbb{R}^m \times \mathcal{D}_{\mathbb{G}},
	\]
	where \(C_k \in 2^{\mathcal{B}_v}\). For all \(k \in \mathbb{N}\), the function
	\(t \mapsto \mu(t,k)\) is Lebesgue measurable and locally essentially bounded on the interval
	\(\Theta_k = \{t \mid (t,k) \in \mathcal{T}\}\).
\end{defn}

\subsection{General Form of the Model}
\noindent The dynamics of a hybrid system are modeled using either equations or differential and difference inclusions under constraints. In this paper, we focus on equations, whose general form is:
\begin{equation}
	\label{GenEquaDiff}
	\left\{
	\begin{aligned}
		(X_{B_k}(t),&\mathbf{u}(t)) \in F,~~  \dot{X}_{B_k}(t)= f\left(X_{B_k}(t),\mathbf{u}(t)\right),\\
		\\
		(X_{B_k}(t),&U_{C_k}(t)) \in J, ~~  X_{B_{k+1}}(t) = j\left(X_{B_k}(t),U_{C_k}(t)\right),\\
		\\
		(X_0,&\mu_0)=(X_{B_0}(0),\mu(0,0)),
	\end{aligned}
	\right.
\end{equation}

where $F \subset \mathcal{D}_\mathbb{G} \times \mathbb{R}^m$ and $J \subset \mathcal{D}_\mathbb{G}^2$ are the flow set and jump set, respectively, while $f$ and $j$ are the flow map and jump map; these elements constitute the data of the hybrid system, denoted by $\mathcal{H} = (F,f,J,j)$, or simply $\mathcal{H}$. We set, for all \(B \in 2^{\mathcal{B}_v}\), $\mathcal{D}(B) = \mathcal{V}(B) \times \mathcal{W}(D) \times \mathcal{A}(D)$, with  $D = B \otimes B$. We assume that the flow map $f : \mathcal{D}(B) \times \mathbb{R}^m \to \mathcal{D}(B)$ is well-defined for all \(B \in 2^{\mathcal{B}_v}\) and governs exclusively the dynamics of the attributes and weights, namely those in \(\mathcal{V}(B)\) and \(\mathcal{W}(D)\), while the structure in \(\mathcal{A}(D)\) is discrete and does not evolve. Thus, the continuous dynamics in equation \eqref{GenEquaDiff} can be written as a coupled system of two equations as follows: if $(X_{B_k}(t), \mathbf{u}(t)) \in F$, then

\begin{equation}
	\label{CoupledDyn}
	\left\{
	\begin{aligned}
		\dot{\mathbf{x}}_{B_k}(t) &= f_v(\mathbf{x}_{B_k}(t),\mathbf{w}_{D_k}(t), \mathbf{a}_{D_k},\mathbf{u}(t)), \\
		\dot{\mathbf{w}}_{D_k}(t) &= f_e(\mathbf{x}_{B_k}(t),\mathbf{w}_{D_k}(t), \mathbf{a}_{D_k},\mathbf{u}(t)),\\
		\dot{\mathbf{a}}_{D_k}(t) &=  \mathbf{0}_D,
	\end{aligned}
	\right.
\end{equation}
where $\mathbf{0}_D$ is the matrix with all zero coordinates on the basis $D$. For any $B \in 2^{\mathcal{B}_v}$ and $D = B \otimes B$, the components of the flow map f are
\begin{equation}
	\label{Deflowmap}
	\begin{split}
		f_v &: \mathcal{D}(B)  \times \mathbb{R}^m  \to \mathcal{V}(B) , \\
		f_e &: \mathcal{D}(B) \times \mathbb{R}^m  \to \mathcal{W}(D).
	\end{split}
\end{equation}

The jump map $j$ is an application in charge of discrete transformations of the system state, acting simultaneously on the topology (adjacency matrix), the attributes, and the weights. It is defined as a composition law 

\begin{equation}
	\label{jumpmapform}
	j : \mathcal{D}_{\mathbb{G}}\times \mathcal{D}_{\mathbb{G}}\to \mathcal{D}_{\mathbb{G}}
\end{equation}
Assuming that the jump set is $J = J_{\nearrow} \cup J_{\searrow} \cup (J_{\star} \times \mathcal{V})$, whose components are defined in \eqref{jumpset}, we can distinguish three implementations of the jump map $j$ as follows:
\begin{equation}
	\label{Defjump}
	\begin{split}
		&j(X_{B_k}(t),U_{C_k}(t)) =\\
		&\begin{cases}
			X_{B_k}(t) +_\cup U_{C_k}(t), & \text{if } (X_{B_k}(t),U_{L_k}(t)) \in J_{\nearrow},\\
			X_{B_k}(t) +_\cap U_{C_k}(t), & \text{if } (X_{B_k}(t),U_{L_k}(t)) \in J_{\searrow},\\
			\tilde{j}(X_{B_k}(t)), & \text{if } X_{B_k}(t) \in J_\star.
		\end{cases}
	\end{split}
\end{equation}

The first implementation tends to increase the topology of $X_{B_k}$ by adding elements via the external perturbation $U_{C_k}$, whereas the second tends to reduce it by specifying in $U_{C_k}$ the topology to be preserved. Recall that the operations $+_\cup$ and $+_\cap$ (see Subsection \ref{AlgVBS}) also act on the attributes and weights. The last case corresponds to a discrete change induced by the system’s intrinsic dynamics. Assuming $J_\star = J_{+} \cup J_{-}$, whose definitions are given in \eqref{jumpset}, we can consider two implementations of this intrinsic discrete dynamics as follows:
\begin{equation}
	\label{autojump}
	\begin{split}
		\tilde{j}&(X_{B_k}(t)) =\\
		&\begin{cases}
			X_{B_k}(t) +_\cup g(X_{B_k}(t)), & \text{if } X_{B_k}(t) \in J_{+},\\
			X_{B_k}(t) +_\cap h(X_{B_k}(t)), & \text{if } X_{B_k}(t) \in J_{-}.
		\end{cases}
	\end{split}
\end{equation}
The function $g$ in \eqref{autojump} can be chosen in the form
\begin{equation}
	g(X_{B_k}(t)) = \xi^+ \scal \mathbf{1}(X_{B_k}(t)) +_\cup Y_L,
\end{equation}

where $\mathbf{1}(X_{B_k}(t))$ is the unit vector constructed from $X_{B_k}$, $\xi^+ \in \mathbb{R}$ represents the perturbation induced by the elements added via $Y_L \in \mathcal{D}_\mathbb{G}$, with $B_k \cap L = \varnothing$. In a stochastic framework, one may assume that $L \in 2^{\mathcal{B}_v}$ is associated with a conditional probability given $B_k \in 2^{\mathcal{B}_v}$, or even that $Y_L$ has a conditional probability \emph{given} $X_{B_k}$. The function $h$, in turn, can be defined as
\begin{equation}
	h(X_{B_k}(t)) = \xi^- \scal \mathbf{1}_H,
\end{equation}
where $\mathbf{1}_H \in \mathcal{D}_\mathbb{G}$ is a unit vector associated with $H \subset B_k$, and $\xi^-$ represents the perturbation induced by the removal of the elements $I(B_k) \setminus I(H)$. The subsets involved in the characterization of the jump set $J$ can be described as follows:
\begin{equation}
	\label{jumpset}
	\begin{aligned}
		J_{\nearrow} &= \left\{(X_{B},Y_{C}) \in \mathcal{D}_\mathbb{G}^2 \mid C\neq \varnothing, \exists H \subseteq C \text{ s.t. } H \not\subset B\right\}, \\
		J_{\searrow} &= \left\{(X_{B},Y_{C}) \in \mathcal{D}_\mathbb{G}^2 \mid C\neq \varnothing \text{ s.t. } C \subset B\right\}, \\
		J_{+} &= \left\{X_{B}\in \mathcal{D}_\mathbb{G} \mid \exists K \subseteq I(B)  \text{ s.t. }  x_k \geq \kappa > 0~~ \forall k \in K\right\}, \\
		J_{-} &= \left\{X_{B}\in \mathcal{D}_\mathbb{G} \mid \exists K \subseteq I(B)  \text{ s.t. }  0 < x_k \leq \lambda ~~ \forall k \in K\right\},
	\end{aligned}
\end{equation}
where $\kappa, \lambda \in \mathbb{R}$ and the $x_k$ are the attributes of index $k$ carried by the vector $X_{B_k}$. The function \(j\), defined as a composition law according to \eqref{jumpmapform}, essentially using the operations \(+_\star\) with \(\star \in \{\cup,\cap\}\) (see \eqref{Defjump} and \eqref{autojump}), is said to be closed/stable if and only if the operation \(+_\star\), with \(\star \in \{\cup,\cap\}\), is closed/stable.

\begin{prop}
	The set $\mathcal{D}_{\mathbb{G}} \subset \mathcal{D}$ is closed (or stable) under the operation $+_\star$, where $\star \in \{\cup,\cap\}$.
\end{prop}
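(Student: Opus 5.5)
The plan is to work componentwise. Take two arbitrary elements $X_B=(\mathbf{x}_B,\mathbf{w}_D,\mathbf{a}_D)$ and $Y_C=(\mathbf{y}_C,\mathbf{v}_L,\mathbf{e}_L)$ of $\mathcal{D}_{\mathbb{G}}$. By the characterization of $\mathcal{D}_{\mathbb{G}}$ as the direct image of $\varphi$, these satisfy $D=B\otimes B$ and $L=C\otimes C$, with real coordinates on the vertex and edge parts and Boolean coordinates on the adjacency part. The operation $+_\star$ on $\mathcal{D}$ acts componentwise as in \eqref{Statment-Op-in-V-W-A}: $+_\star$ on $\mathcal{V}$ and on $\mathcal{W}$, and the Boolean variant ($\vee_\cup$ or $\wedge_\cap$) on $\mathcal{A}$. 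It therefore suffices to show that the result is again a triple $(\mathbf{z}_{E},\mathbf{s}_{F},\mathbf{c}_{F})$ with $E=B\star C$, real and Boolean coordinates as required, and $F=E\otimes E$.

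\textbf{Step 1: coordinates stay in the right sets.} By the explicit formulas for $+_\cup$ and $+_\cap$, the coordinates are $x_p+y_p\in\mathbb{R}$ on $I(B\star C)$ and $w_{p,q}+v_{p,q}\in\mathbb{R}$ on the edge basis. Disjunction and conjunction of $\{0,1\}$-values stay in $\{0,1\}$. So each component lies in $\mathcal{V}$, $\mathcal{W}$ and $\mathcal{A}$ respectively.

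\textbf{Step 2: the bases are compatible (the main obstacle).} One must check that the edge basis produced by the operation equals $(B\star C)\otimes(B\star C)$.
\begin{itemize}
\item For $\star=\cap$: $I(D\cap L)=(I(B)\times I(B))\cap(I(C)\times I(C))=(I(B)\cap I(C))^2$. Hence $D\cap L=(B\cap C)\otimes(B\cap C)$ exactly.
\item For $\star=\cup$: in general $(B\otimes B)\cup(C\otimes C)\subsetneq (B\cup C)\otimes(B\cup C)$, since the cross pairs $(p,q)$ with $p\in I(B)\setminus I(C)$ and $q\in I(C)\setminus I(B)$ are missing.
\end{itemize}
I would resolve the $\cup$ case using the convention in the definitions of $+_\cup$ and $\vee_\cup$ that missing coordinates are taken to be $0$. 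Concretely, I would interpret the result on the full basis $(B\cup C)\otimes(B\cup C)$, setting $w_{p,q}=v_{p,q}=0$ and $a_{p,q}=e_{p,q}=0$ on the cross pairs. The adjacency entry there is $0\vee 0=0$, so no spurious edge is created. This matches $\varphi(X\cup_+Y)$, whose weight and adjacency matrices are indexed by $(\mathcal{I}_X\cup\mathcal{I}_Y)^2$ with zeros where no edge exists.

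\textbf{Step 3: conclude.} Having shown that the result has the form $(\mathbf{z}_E,\mathbf{s}_{E\otimes E},\mathbf{c}_{E\otimes E})$, it remains to exhibit a preimage. Let $X=\varphi^{-1}(X_B)$ and $Y=\varphi^{-1}(Y_C)$, which exist by Proposition~\ref{Bijectionphi}. By the semi-linearity relation \eqref{Statment-Op-in-V-W-A}, $X_B+_\star Y_C=\varphi(X\star Y)$ with $X\star Y\in\mathbb{G}$, using the closure of $\mathbb{G}$ under $\cup_+$ and $\cap_+$ given by Theorems~\ref{GraphSetStructureU+} and~\ref{GraphSetStructuren+}. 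Hence $X_B+_\star Y_C\in\mathcal{D}_{\mathbb{G}}$.

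I would also verify this identity by direct comparison of Definitions~\ref{AddUnionDef} and~\ref{AddInterDef} with the formulas for $+_\star$. The vertex sets and attribute sums agree label by label. For $\cap$ in particular, an edge survives in $X\cap_+Y$ iff it lies in $\mathcal{L}_X\cap\mathcal{L}_Y$, which matches the rule $a\wedge e$.
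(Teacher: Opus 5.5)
Your Step 3 is the paper's proof. The paper transports the closure of $\mathbb{G}$ under $\cup_+$ and $\cap_+$ through $\varphi$ (Corollary~\ref{IsoStruct}) and takes the relation \eqref{Statment-Op-in-V-W-A} for granted, although it only states that relation as a requirement on the operations and never checks it. Your Steps 1--2 go further and rightly show that it is not automatic. With $+_\cup$ as written, the edge basis $D\cup L=(B\otimes B)\cup(C\otimes C)$ is not of the form $E\otimes E$ whenever $B\not\subseteq C$ and $C\not\subseteq B$, so for $+_\cup$ the statement is literally false. Your zero-padding to $(B\cup C)\otimes(B\cup C)$ is an amendment of the definition, not a reading of it: the stated convention only pads coordinates inside $D\cup L$. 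Present it as an amendment; with it, the $\cup$ case is fine.

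The genuine gap is the $\cap$ case of Step 3. You compare $\varphi(X)+_\cap\varphi(Y)$ with $\varphi(X\cap_+Y)$ only on the adjacency component, but the weight components disagree. The reason is that $+_\cap$ on $\mathcal{W}$ adds $w_{p,q}+v_{p,q}$ on all of $(I(B)\cap I(C))^2$, irrespective of adjacency. Take $X$ with vertices $1,2$ and the single edge $(1,2)$ of weight $5$, and $Y$ with vertices $1,2$ and no edge. Let $c$ be whatever fixed value $\varphi$ puts on non-edge entries of the weight matrix (your padding uses $c=0$). Since $X\cap_+Y$ has no edge, $\varphi(X\cap_+Y)$ has weight $c$ at $(1,2)$. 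By contrast, $\varphi(X)+_\cap\varphi(Y)$ has weight $5+c$ and adjacency $1\wedge 0=0$ there. That triple is not the image of any graph. So with $\mathcal{D}_{\mathbb{G}}=\varphi(\mathbb{G})$, the operation $+_\cap$ as defined does not preserve $\mathcal{D}_{\mathbb{G}}$, and the identity on which your Step 3 (and the paper's proof) rests is false.

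You have to commit to one reading:
\begin{enumerate}
\item Take $\mathcal{D}_{\mathbb{G}}$ to be the paper's ``equivalent'' description: all triples on bases $E$, $E\otimes E$, $E\otimes E$ with real and Boolean coordinates. Then your Steps 1--2, with the padding, already prove closure, and Step 3 must be dropped, since $\varphi$ is not onto this set and $\varphi^{-1}(X_B)$ need not exist.
\item Keep $\mathcal{D}_{\mathbb{G}}=\varphi(\mathbb{G})$ and amend $+_\cap$ on the weight component as well, e.g.\ $s_{p,q}=(a_{p,q}\wedge e_{p,q})(w_{p,q}+v_{p,q})$. After this, your label-by-label comparison goes through.
\end{enumerate}
The tension between your opening ``it therefore suffices'' and the later ``it remains to exhibit a preimage'' is exactly this unresolved choice.
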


\begin{proof}
	According to Bourbaki \cite{bourbaki1998algebra}, a subset $\mathcal{D}_{\mathbb{G}} \subset \mathcal{D}$ is said to be stable under a binary operation $\star$ if, for all $X, Y \in \mathcal{D}_{\mathbb{G}}$, we have $X \star Y \in \mathcal{D}_{\mathbb{G}}$. Since the set of graphs $\mathbb{G}$ is stable under $\cup_+$ and $\cap_+$ by definition (see Definitions~\ref{AddUnionDef} and~\ref{AddInterDef}), and since $(\mathcal{D}_{\mathbb{G}}, +_\cup, +_\cap)$ inherits this structure through the semi-linear mapping $\varphi$ (see Corollary~\ref{IsoStruct}), it follows that $\mathcal{D}_{\mathbb{G}} \subset \mathcal{D}$ is closed (or stable) under $+_\star$, with $\star \in \{\cup,\cap\}$.
\end{proof}

\subsection{\label{ResExist}Existence of Solutions}
\noindent The existence of solutions is one of the criteria for well-posedness of the problem. Its proof requires specifying the notion of solution, i.e., the class of functions in which solutions are sought \cite{ccamhbel2002solution}. Here, we adopt the following notion of solution:

\begin{defn}
\label{DefSolution}
A pair $(\phi,\mu)$, where $\phi$ is a hybrid arc (see Definition~\ref{ArcHybrid}) and $\mu = (\mathbf u, U_{C_k})$ is a hybrid disturbance (see Definition~\ref{HybridDisturbance}),  is a solution pair to the hybrid system $\mathcal H$ modeled in \eqref{GenEquaDiff} if $(\phi(0,0),\mathbf{u}(0)) \in F$ or $(\phi(0,0),U_0(0)) \in J$, and the following conditions hold:

\begin{enumerate}
	\item For all $k \in \mathbb N$ such that $\Theta_k \neq \emptyset$,
	\vspace{- 1 mm}
	\[
	(\phi(t,k),\mathbf{u}(t)) \in F
	\quad \text{for all } t \in \operatorname{int} \Theta_k,
	\]
	\vspace{- 2.5 mm}
	\[
	\frac{d}{dt} \phi(t,k) = f(\phi(t,k),\mathbf{u}(t))
	\quad \text{for almost all } t \in \Theta_k.
	\]
	
	\item For all $(\tau_k,k) \in \mathcal{T}$ such that $(\tau_{k},k+1) \in \mathcal{T}$,
	
		\vspace{- 1 mm}
	
	\[
	(\phi(\tau_k,k),U_{C_k}(\tau_k)) \in J,
	\]
	
	\vspace{-2.5 mm}
	
	\[
	\phi(\tau_{k},k+1) = j(\phi(\tau_k,k),U_{C_k}(\tau_k)).
	\]
\end{enumerate}

A solution pair $(\phi,\mu)$ is classified as: i) nontrivial if $\mathcal{T}$ has at least two points; ii) complete if $\mathcal{T}$ is unbounded; iii) maximal if there does not exist another solution pair $(\phi',\mu')$ such that $(\phi,\mu)$ is a truncation of $(\phi',\mu')$ on a proper subset of $\mathcal{T}$.
\end{defn}

To guarantee the existence of solutions on each interval $\Theta_k$, for all $k \in \mathbb{N}$, we require that the flow map $f$ satisfy the Carathéodory conditions \cite{filippov2013differential,biles1997technique}, which are:

\begin{assump}
	\label{CaratheodoryAssum}
	Let $B \subset \mathcal B_v$, define $D = B \otimes B$, and set $ \mathcal D(B) = \mathcal V(B) \times \mathcal W(D) \times \mathcal A(D)$.  We assume that the function  $f : \mathcal D(B) \times \mathbb{R}^m \to \mathcal D(B)$ satisfies the following properties:
	
	\begin{enumerate}
		\item Measurability in time: For every fixed $X_B \in \mathcal D(B)$, the mapping  $t \longmapsto f(X_B, \mathbf u(t))$ is measurable on $\Theta_k$.
		
		\item Continuity in space (locally): For almost every $t \in \Theta_k$, the mapping $X_B \longmapsto f(X_B, \mathbf u(t)) $ is continuous on every compact $K \subset \mathcal D(B)$.
		
		\item Local boundedness: For every compact $K \subset \mathcal D(B)$, there exists a function $m_K \in L^1_{\mathrm{loc}}(\Theta_k)$ such that   $\| f(X_B, \mathbf u(t)) \| \le m_K(t)$,  $\forall X_B \in K$, for almost every  $t \in \Theta_k$.
	\end{enumerate}
\end{assump}

\begin{note}
	The second point in Assumption \ref{CaratheodoryAssum} only makes sense for two reasons. First, $\mathcal D(B)$ has a fixed size, determined by the base $B$. Second, the Boolean component of an element $X_B \in \mathcal D(B)$, which belongs to $\mathcal A(D)$, is constant for all $t \in \Theta_k$, for every $k \in \mathbb{N}$.
\end{note}

\begin{thm}
	Consider the hybrid system $\mathcal{H}$ in \eqref{GenEquaDiff}. 
	Assume that, for every $B \in 2^{\mathcal{B}_v}$, the flow map $f$ is well defined and satisfies Assumption~\ref{CaratheodoryAssum}.  Assume moreover that the jump map $j$, given by \eqref{jumpmapform}, satisfies $j\bigl(\mathcal{D}_{\mathbb{G}}~\times~\mathcal{D}_{\mathbb{G}}\bigr)~\subseteq~\mathcal{D}_{\mathbb{G}}$. Then, there exists a pair of solutions $(\phi,\mu)$ to the problem $\mathcal{H}$ in the sense of Definition~\ref{DefSolution}.
\end{thm}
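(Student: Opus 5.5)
We prove the theorem by constructing a nontrivial solution pair $(\phi,\mu)$ step by step, concatenating flow intervals and jumps, in the spirit of the existence results for hybrid systems in \cite{goebel2012hybrid}. The basic observation is that, on each interval $\Theta_k$, the basis $B_k$ is frozen. So $\mathcal{D}(B_k)=\mathcal V(B_k)\times\mathcal W(D_k)\times\mathcal A(D_k)$ is a finite-dimensional object once $B_k$ is finite (or, more generally, when $B_k$ has finitely many elements relevant to the flow). Moreover, by \eqref{CoupledDyn} the Boolean component $\mathbf a_{D_k}$ is a constant parameter. Hence the flow reduces to an ordinary differential equation in the real coordinates $(x_i)_{i\in I(B_k)}$, $(w_{p,q})_{(p,q)\in I(D_k)}$. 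This is an equation in $\mathbb R^{n_k}$, with $n_k=|B_k|+|B_k|^2$, identified with $\mathcal V(B_k)\times\mathcal W(D_k)$ through the coordinates on the linearly independent family $\mathcal B_v$ and $\mathcal B_e$.

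\emph{Step 1 (start).} We fix an initial datum $X_0=X_{B_0}\in\mathcal D_{\mathbb G}$ and a disturbance $\mu$ with $\mathbf u$ measurable and locally essentially bounded, as in Definition~\ref{HybridDisturbance}. Two cases arise. If $(X_0,\mu(0,0))\in J$, we take $\mathcal T=\{(0,0),(0,1)\}$ and set $\phi(0,1)=j(X_0,U_{C_0}(0))$. The hypothesis $j(\mathcal D_{\mathbb G}\times\mathcal D_{\mathbb G})\subseteq\mathcal D_{\mathbb G}$, together with the stability of $\mathcal D_{\mathbb G}$ under $+_\cup,+_\cap$ (proposition preceding the theorem), guarantees that $\phi(0,1)\in\mathcal D_{\mathbb G}$. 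Condition 2 of Definition~\ref{DefSolution} then holds, and condition 1 is vacuous because each $\Theta_k$ is a single point.

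\emph{Step 2 (flow).} Otherwise $(X_0,\mathbf u(0))\in F$. We apply the Carathéodory existence theorem \cite{filippov2013differential} to $\dot{\mathbf z}=f(\mathbf z,\mathbf u(t))$ in $\mathcal D(B_0)$. Assumption~\ref{CaratheodoryAssum} gives exactly its three hypotheses: measurability in $t$, continuity in the state, and an $L^1_{\rm loc}$ bound on compacts. Together these yield an absolutely continuous solution on some $[0,\tau_1]$ with $\tau_1>0$. We then define $\phi(t,0)$ as this solution, with $\mathbf a_{D_0}$ held constant so that $\phi(t,0)\in\mathcal D(B_0)\cap\mathcal D_{\mathbb G}$. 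Condition 1 then holds, provided the solution stays in $F$ on the open interval.

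\emph{Step 3 (iteration).} At $\tau_1$ we either jump, using Step 1 with $X_{B_1}=j(\cdot)\in\mathcal D_{\mathbb G}$, which may change the basis, or continue to flow on the new basis. Induction on $k$ then gives a compact hybrid time domain, and taking unions gives a hybrid time domain $\mathcal T$ and a hybrid arc $\phi$ in the sense of Definition~\ref{ArcHybrid}. Already the first step produces a nontrivial solution, which suffices for the existence claim.

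\textbf{Main obstacle.} The hardest step is the viability issue in Step 2. The Carathéodory solution must remain in the flow set $F$ on $\operatorname{int}\Theta_0$; otherwise a jump must be available at the exit point. I would handle this by restricting to initial points from which either $J$ is active (Step 1) or $F$ contains a neighborhood of flow trajectories. This is the standard tangent-cone viability condition of \cite{goebel2012hybrid}, which I would impose or derive from the structure of $F$ in \eqref{GenEquaDiff}. If this is not available, the statement should be understood as existence of a nontrivial solution whenever the initial condition lies in $J$ or in the interior of the flow set. A secondary technical point is to make sure that $\mathcal D(B_k)$ is finite-dimensional, or else to invoke the Banach-space version, so that Carathéodory applies.
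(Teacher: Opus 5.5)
Your construction is the paper's own: induction on the jump index, Carathéodory on each $\Theta_k$ with $B_k$ and $\mathbf a_{D_k}$ frozen, and the hypothesis $j(\mathcal D_{\mathbb G}\times\mathcal D_{\mathbb G})\subseteq\mathcal D_{\mathbb G}$ for the jumps. The viability obstacle you flag is genuine. The paper's proof has the same gap without acknowledging it: it asserts that Assumption~\ref{CaratheodoryAssum} extends $\phi(\cdot,k)$ over $\Theta_k$ and that the solution ``is always non-trivial.'' Assumption~\ref{CaratheodoryAssum} only yields an absolutely continuous solution of $\dot{\mathbf z}=f(\mathbf z,\mathbf u(t))$ in $\mathcal D(B_0)$. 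Nothing forces $(\phi(t,0),\mathbf u(t))\in F$ on $\operatorname{int}\Theta_0$. For instance, suppose $(\xi_0,\eta_0)$ is the only point of $F$ with state in $\mathcal D(B_0)$, $f(\xi_0,\eta_0)\neq 0$, and $(\xi_0,\zeta)\notin J$ for every $\zeta$. Then a flow would have to stay at $\xi_0$ with zero derivative, contradicting $f\neq0$, so no nontrivial solution starts at $\xi_0$. Nontriviality therefore really needs an extra tangent-cone-type condition as in \cite{goebel2012hybrid}; it cannot be derived from the stated hypotheses.

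For the statement as literally written you do not need viability, and saying so would complete your proof. Definition~\ref{DefSolution} does not demand nontriviality; that is only a classification. For $\mathcal T=\{(0,0)\}$, conditions 1 and 2 are vacuous: $\operatorname{int}\Theta_0=\emptyset$, $\{0\}$ is Lebesgue-null, and there is no jump pair. So any $\phi(0,0)$ with $(\phi(0,0),\mathbf u(0))\in F$ or $(\phi(0,0),U_0(0))\in J$ already gives a solution pair. Nontrivial solutions from points of $F\setminus J$ should then be claimed only under an added hypothesis.

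Two of your fallbacks need correcting.
\begin{enumerate}
\item ``Initial condition in the interior of $F$'' (relative to $\mathcal D(B_0)\times\mathbb R^m$) does not suffice for an arbitrary measurable $\mathbf u$, since $(\phi(t,0),\mathbf u(t))$ can exit $F$ at once through $\mathbf u$. It works only because $\mu$ is part of the solution pair: you may choose $\mathbf u\equiv\mathbf u(0)$ near $t=0$ and use continuity of $t\mapsto\phi(t,0)$ in the real coordinates of $\mathcal D(B_0)$.
\item The ``Banach-space version'' is not available. For infinite $B$, $\mathcal V(B)=\operatorname{span}(B)$ is not complete under any norm. Even in a genuine Banach space, continuity with local integrable bounds does not give existence, since Peano's theorem fails in every infinite-dimensional Banach space. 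The finite-dimensional Carathéodory theorem requires finite $B$, which the paper tacitly assumes (finite $\mathcal I$ in the application).
\end{enumerate}
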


\begin{proof}
	The construction of the solution $(\phi, \mu)$, where $\phi$ and $\mu$ are respectively a hybrid arc and a hybrid disturbance (as defined in Definition~\ref{ArcHybrid} and Definition~\ref{HybridDisturbance}), is performed by induction on the jump index $k \in \mathbb{N}$.
	
	Initialization ($k=0$): Let $\phi(0,0) = \xi_0 \in \mathcal{D}_{\mathbb{G}}$ and $\mu(0,0) = (\eta_0, \zeta_0) \in \mathbb{R}^m \times \mathcal{D}_{\mathbb{G}}$.
	
	\vspace{-1 mm}
	
	\begin{itemize}
		\item  If $(\xi_0, \eta_0) \in F$, condition (1) of Definition~\ref{DefSolution} is activated. Assumption~\ref{CaratheodoryAssum} on the flow map $f$ guarantees the existence of an absolutely continuous mapping $t \mapsto \phi(t,0)$ on an interval $\Theta_0$ \cite{biles1997technique}. The measurability of $\mathbf{u}(t)$ is a direct consequence of the composition principle: for $t \mapsto f(x, \mathbf{u}(t))$ to be Lebesgue-measurable for a fixed $x$ (per Assumption~\ref{CaratheodoryAssum}), it is necessary that $\mathbf{u}(t)$ be measurable.
		\item If $(\xi_0, \zeta_0) \in J$, condition (2) of Definition~\ref{DefSolution} is activated. By the  hypothesis $j(\mathcal{D}_{\mathbb{G}} \times \mathcal{D}_{\mathbb{G}}) \subseteq \mathcal{D}_{\mathbb{G}}$, the new state $\phi(0,1) = j(\xi_0, \zeta_0)$ remains in $\mathcal{D}_{\mathbb{G}}$.
	\end{itemize}
	
	In both cases, the solution is always non-trivial as it can be extended beyond the initial instant $(0,0)$, either through a continue time interval or a jump to an admissible state.  Induction: For any $(t,k) \in \mathcal{T}$, as long as the state remains in $F \cup J$:
	
	\vspace{-1 mm}
	\begin{itemize}
		\item  If $(\phi(t,k), \mathbf{u}(t)) \in F$, the solution is extended over the interval $\Theta_k$ via Assumption~\ref{CaratheodoryAssum}, ensuring $t \mapsto \phi(t,k)$ is absolutely continuous and $t \mapsto \mathbf{u}(t)$ is Lebesgue-measurable.
		\item If $(\phi(t,k), U_{C_k}(t)) \in J$, a jump is triggered such that $\phi(t, k+1) = j(\phi(t,k), U_{C_k}(t))$. The state $\phi(t, k+1)$ can be an admissible starting point for a subsequent flow or jump phase.
	\end{itemize}
	
	\vspace{-1 mm}
	
	The solution is complete if this process continues indefinitely (i.e., $t+k \to \infty$). If the state leaves $F \cup J$, the solution is maximal and exists over a bounded hybrid domain $\mathcal{T}_{\text{max}} \subset \mathcal{T}$.
\end{proof}
\section{\label{MicrobioteApp}Application to Gut Microbiota Dynamics}
\noindent  The objective of this section is to illustrate how the previously developed framework can be used to model and simulate systems that combine structural changes with continuous variations of quantitative variables. A particularly relevant application is the dynamics of the gut microbiota under antibiotic treatment, followed by bacteriotherapy. This application primarily serves to resolve the difficulty raised by Hirsch \cite{hirsch1984dynamical} regarding fixed-structure state-spaces, particularly within the field of theoretical ecology.

The gut microbiota is a complex and dynamic community of microorganisms inhabiting the gastrointestinal tract. It plays a central role in immune and metabolic homeostasis, as well as in protecting the host against pathogens \cite{thursby2017introduction}. These microorganisms engage in intricate interactions that sustain their growth and stability \cite{lin2022linear}. However, external perturbations, such as antibiotic treatments, can disrupt both the composition of the microbiota and the interactions within it. This disruption, referred to as dysbiosis, is associated with various conditions, including obesity, inflammatory diseases, and HIV infection \cite{thursby2017introduction, lin2022linear}.

In this context, bacteriotherapy has emerged as a therapeutic strategy aimed at restoring microbiota balance by selectively introducing or removing microbial species to achieve a desired equilibrium. Jones et al. \cite{jones2020navigation} modeled microbiota dynamics under antibiotic administration followed by bacteriotherapy using the generalized Lotka–Volterra (gLV) model \cite{taylor1988consistent}. In a previous study \cite{doliveira2026generalized}, we proposed implementing this model in the variable-base space \(\mathcal{V}\) (see Definition \ref{VBS_attributes}), with constant interaction weights, in order to distinguish zero abundance (\(x_i = 0\)) from the actual absence of species \(i\), as discussed below. Furthermore, implementing the dynamics in \(\mathcal{D}_{\mathbb{G}}\), which is isomorphic to the graph state space \(\mathbb{G}\), allows for the simultaneous evolution of topology, population abundances, and interaction weights. This formulation requires specifying the data of the hybrid system \(\mathcal{H}\) defined by \eqref{GenEquaDiff}–\eqref{jumpset}. More precisely, the function \(f_v\) in \eqref{CoupledDyn} describes the gLV dynamics of microbial abundances under antibiotic treatment, while the function \(j\) in \eqref{Defjump} models bacteriotherapy. Its variant \(\tilde{j}\), defined in \eqref{autojump}, accounts for the appearance or disappearance of species when their abundances cross a prescribed threshold.

In contrast, no model in the literature describes the dynamics of interaction weights \cite{lin2022linear}. To complete the specification of the hybrid system \(\mathcal{H}\), we adopt here an illustrative approach by defining the function \(f_e\) in \eqref{CoupledDyn} based on models of synaptic weight dynamics in neuronal circuits; more specifically, we adapt Oja’s rule into a linear formulation (see equation 8.16 in \cite{dayan2005theoretical}). With this choice, the coupled flow system \eqref{CoupledDyn} can be written as follows:

\begin{equation}
	\label{flowModel}
	\left\{
	\begin{array}{rlc}
		\dot{\mathbf{x}}_{B_k}(t) &=& \mathbf{x}_{B_k}(t) \circ\left\{\boldsymbol{\rho}_{B_k} + \mathbf{a}_{D_K}\circ\mathbf{w}_{D_K}(t) \mathbf{x}_{B_k}(t)\right. \\
		& & \left. + u(t) \scal \boldsymbol{\varepsilon}_{B_k}\right\},\\
		\\
		\dot{\mathbf{w}}_{D_k}(t) &=& \alpha \scal   \mathbf{w}_{D_K}(t) \circ \mathbf{a}_{D_k} + \beta \scal \mathbf{x}_{B_k}(t) \mathbf{x}_{B_k}(t)^{\text{T}},
	\end{array}
	\right.
\end{equation}
where \(\circ\) denotes the Hadamard product (element-wise multiplication).  \(\mathbf{x}_{B_k}\) is the vector of abundances \(x_i\) for populations \(i \in I(B_k)\), and \(\mathbf{x}_{B_k}^{\text{T}}\) its transpose.   The vectors \(\boldsymbol{\rho}_{B_k}\) and \(\boldsymbol{\varepsilon}_{B_k}\) represent, respectively, the growth rates and susceptibilities of the same species. The effect of the antibiotic is modeled by the control function
\[
u(t) =
\begin{cases}
	1, & t_0 < t < t_\star, \\
	0, & t > t_\star.
\end{cases}
\]
The scalar parameters are given by \(\alpha = -2 \times 10^{-2}\) and \(\beta = -1 \times 10^{-1}\). We fix the universe \(\mathcal{I}\) as the set of the 11 species listed by Stein et al. in Figure 2 of \cite{stein2013ecological}. Consequently, for all \(k \in \mathbb{N}\), we have \(I(B_k) \subset \mathcal{I}\). The growth rates and susceptibilities are also taken from this figure. We set \(\lambda = 1 \cdot 10^{-6}\) in the set \(J_-\) defined in \eqref{jumpset}, and choose \(t_\star = 4\) days. The initial state is given by \(X_0 = (\mathbf{x}_{B_0}, \mathbf{w}_{D_0}, \mathbf{a}_{D_0})\), where \(B_0 = \{\mathbf{b}_1, \mathbf{b}_2, \mathbf{b}_4, \mathbf{b}_5\}\) and \(D_0 = B_0 \otimes B_0\), with
\[
\mathbf{x}_{B_0}
= 0.7 \mathbf{b}_1 + 0.3 \mathbf{b}_2 + 1.2 \mathbf{b}_4 + 1.3 \mathbf{b}_5,
\]

\[
\mathbf{w}_{D_0} =
\begin{array}{c c c c|c}
	\mathbf{b}_1 & \mathbf{b}_2 & \mathbf{b}_4 & \mathbf{b}_5 & \\ \cline{1-4}
	-0.21 & 0.1 & -0.16 & -0.014 & \mathbf{b}_1 \\
	0.06 & -0.1 & -0.15 & -0.19 & \mathbf{b}_2 \\
	0.22 & 0.14 & -0.83 & -0.22& \mathbf{b}_4 \\
	-0.18 & 0& -0.05 & -0.51& \mathbf{b}_5
\end{array}
\]

\[
\mathbf{a}_{D_0} = (a_{p,q}), \quad \text{with } a_{p,q} = 1 \ \forall p,q \in I(B_0).
\]

\begin{figure}[h]
	  \centering
	\resizebox{.25\textwidth}{!}{
		
\begin{tikzpicture}[
	vertex/.style={circle, draw, fill=blue!10, thick, minimum size=13mm, font=\small},
	edge/.style={thick, draw=black},
	label_edge/.style={font=\scriptsize, fill=white, inner sep=1pt, sloped}
	]
	
	\foreach \i/\angle/\poids in {1/90/0.7, 2/0/0.3, 4/270/1.2, 5/180/1.3} {
		\node[vertex] (v\i) at (\angle:2cm) {(\i, \poids)};
	}
	
	\draw[edge] (v1) -- node[label_edge] {0.06} (v2);
	\draw[edge] (v2) -- node[label_edge] {0.14} (v4);
	\draw[edge] (v4) -- node[label_edge] {-0.22} (v5);
	\draw[edge] (v5) -- node[label_edge] {-0.18} (v1);
	
	\draw[edge] (v1) -- node[label_edge, pos=0.25, rotate=90] {0.22} (v4);
	\draw[edge] (v2) -- node[label_edge, pos=0.25] {-0.19} (v5);
	
\end{tikzpicture}
 
	}
	\caption{\label{initstate}Illustration of the initial condition represented as a graph: only one interaction per pair of vertices is shown for clarity, although each pair admits two directed interactions. Each vertex is of the form $(p, x_p)$, where $x_p$ denotes the abundance of species $p$. The edges are weighted by $w_{p,q}$, representing the ecological interaction strength between species $p$ and $q$, for every pair $(p,q)$.
	}
\end{figure}

 This initial condition is illustrated as a graph in Fig.~\ref{initstate}, where, for clarity, only one relation is drawn per pair of vertices. We also consider the input \(U_{C_k} = (\mathbf{v}_{C_k}, \mathbf{\omega}_{L_k}, \mathbf{e}_{L_k})\) associated with the jump map \(j\) defined in \eqref{Defjump}, where \(L_k = C_k \otimes C_k\). For all \((t,k) \in \mathcal{T}\),

\begin{equation}
	\label{bacthera}
	C_k =
	\begin{cases}
		\{\mathbf{b}_1,\mathbf{b}_2,\mathbf{b}_4,\mathbf{b}_5, \mathbf{b}_9\}, & \text{if } t = 190, \\
		\{\mathbf{b}_1,\mathbf{b}_2,\mathbf{b}_3,\mathbf{b}_4,\mathbf{b}_5, \mathbf{b}_8, \mathbf{b}_9\}, & \text{if } t = 330, \\
		\{\mathbf{b}_1, \mathbf{b}_2, \mathbf{b}_4\}, & \text{if } t = 560, \\
		\varnothing, & \text{otherwise.}
	\end{cases}
\end{equation}

The input, modeling bacteriotherapy, is defined on the basis $C_k$ at different time points, as specified in Equation~\ref{bacthera}. It is illustrated as a graph in Fig.~\ref{inst4}. At times $t = 190$ days and $t = 330$ days, the input can be likened to Fecal Microbiota Transplantation (FMT). In the first case, it consists of adding species 9 along with its ecological interactions with the species present in the initial state (Fig.~\ref{initstate}). In the second case, it corresponds to adding species 3 and 8 to the state already enriched with species 9. At time $t = 560$ days, the process instead consists of removing certain species while specifying those that must be retained. Zero values on the vertices and edges of the input graphs can be interpreted as indicating no perturbation of the existing quantities.

We investigate several implementations\footnote{The C++ code implementing the proposed framework, including the model used in this study, is available upon request from the authors at \textit{christophe.roman@lis-lab.fr}.} of the hybrid model \(\mathcal{H}\) defined in \eqref{GenEquaDiff}, whose continuous dynamics are governed by \eqref{flowModel}.  The results are shown in Fig.~\ref{fig:microbiotaDyn}, Fig.~\ref{fig:microbiotaDynBacterio}, and Fig.~\ref{fig:microbiotaDynBacteriowithWeightsDyn}, which respectively illustrate: (i) the evolution of species abundances driven solely by the gLV dynamics, (ii) the evolution of species abundances with topological changes (species addition and removal), and (iii) the coupled evolution of species abundances, ecological interaction weights, and network topology. It should be noted, in general, that a comparison of these results with those of Jones et al.~\cite{jones2020navigation} for comparable scenarios (antibiotic effects and modification of species composition, without evolution of interaction weights) reveals a consistency in qualitative behaviors, despite the differences between the formalisms used.

\begin{figure}[h]
	\centering
	\begin{subfigure}[b]{0.55\columnwidth}
		\centering
		\resizebox{\linewidth}{!}{\resizebox{.7\textwidth}{!}{
	
	\begin{tikzpicture}[
		vertex/.style={circle, draw, fill=blue!10, thick, minimum size=13mm, font=\small},
		edge/.style={thick, draw=black},
		label_edge/.style={font=\small, fill=white, inner sep=1pt}
		]
		
		\foreach \i/\angle/\poids in {1/90/0.0, 2/0/0.0, 4/270/0.0, 5/180/0.0} {
			\node[vertex] (v\i) at (\angle:2.7cm) {(\i, \poids)};
		}
		
		\node[vertex, fill=red!10] (v9) at (0,0) {(9, 0.85)};
		
		\draw[edge] (v9) -- node[label_edge] {0.35} (v1);
		\draw[edge] (v9) -- node[label_edge] {-0.03} (v2);
		\draw[edge] (v9) -- node[label_edge] {0.67} (v4);
		\draw[edge] (v9) -- node[label_edge] {0.16} (v5);
		
	\end{tikzpicture}
	
}}
		\caption{\label{inst1}}
	\end{subfigure}%
	\\
	\vspace{2 mm}
	\begin{subfigure}[b]{0.75\columnwidth}
		\centering
		\resizebox{\linewidth}{!}{\resizebox{.5\textwidth}{!}{
	
\begin{tikzpicture}[
	vertex/.style={circle, draw, fill=blue!10, thick, minimum size=16mm, font=\normalsize},
	center_vertex/.style={circle, draw, fill=red!10, thick, minimum size=16mm, font=\small},
	edge/.style={thick, draw=black},
	label_edge/.style={font=\large\bfseries, fill=white, inner sep=2pt}
	]
	
	\node[vertex] (v1) at (140:6cm) {$(1, 0.0)$};
	\node[vertex] (v2) at (110:6cm) {$(2, 0.0)$};
	\node[vertex] (v4) at (80:6cm) {$(4, 0.0)$};
	\node[vertex] (v5) at (50:6cm) {$(5, 0.0)$};
	\node[vertex] (v9) at (20:6cm) {$(9, 0.0)$};
	
	\node[center_vertex] (v3) at (-3, -2) {$(3, 0.6)$};
	\node[center_vertex] (v8) at (3, -2) {$(8, 0.8)$};
	
	\draw[edge] (v3) -- node[label_edge] {$-0.77$} (v8);
	
	\draw[edge] (v3) -- node[label_edge, pos=0.5] {$0.14$} (v1);
	\draw[edge, bend left=10] (v3) -- node[label_edge, pos=0.3] {$-0.04$} (v2);
	\draw[edge, bend left=15] (v3) -- node[label_edge, pos=0.5] {$-0.13$} (v4);
	\draw[edge, bend left=20] (v3) -- node[label_edge, pos=0.25] {$-0.17$} (v5);
	\draw[edge, bend left=30] (v3) -- node[label_edge, pos=0.25] {$0.30$} (v9);
	
	\draw[edge, bend right=30] (v8) -- node[label_edge, pos=0.88] {$-0.4$} (v1);
	\draw[edge, bend right=20] (v8) -- node[label_edge, pos=0.85] {$-0.41$} (v2);
	\draw[edge, bend right=15] (v8) -- node[label_edge, pos=0.47] {$-1.01$} (v4);
	\draw[edge, bend right=10] (v8) -- node[label_edge, pos=0.7] {$0.55$} (v5);
	\draw[edge] (v8) -- node[label_edge, pos=0.5] {$0.44$} (v9);
	
\end{tikzpicture}
	
}}
		\caption{\label{inst2}}
	\end{subfigure}
	\\
	\vspace{2 mm}
	\begin{subfigure}[b]{0.5\columnwidth}
		\centering
		\resizebox{\linewidth}{!}{\resizebox{.13\textwidth}{!}{

\begin{tikzpicture}[
	vertex/.style={circle, draw, fill=blue!10, thick, minimum size=10mm},
	edge/.style={thick, draw=black},
	label/.style={font=\scriptsize, fill=white, inner sep=1pt}
	]
	
	\node[vertex] (v1) at (90:2cm) {$(1,0.0)$};
	\node[vertex] (v2) at (210:2cm) {$(2,0.0)$};
	\node[vertex] (v4) at (330:2cm) {$(4,0.0)$};
	
	\draw[edge] (v1) -- node[label] {$0.0$} (v2);
	\draw[edge] (v2) -- node[label] {$0.0$} (v4);
	\draw[edge] (v4) -- node[label] {$0.0$} (v1);
	
\end{tikzpicture}
}}
		\caption{\label{inst3}}
	\end{subfigure}
	\caption{\label{inst4}Illustration of the inputs of bacteriotherapy on the set of basis vectors $C_k$ (see equation \eqref{bacthera}) represented as a graph, with only one relation per pair of vertices shown for clarity: (a) addition of species 9 at $t = 190$ days; (b) addition of species 3 and 8 at $t = 330$ days; (c) removal of all species except 1, 2, and 4 at $t = 560$ days.}
\end{figure}

The purpose of Fig.~\ref{fig:microbiotaDyn} is to illustrate the effect of antibiotic administration on the evolution of species abundances (the attributes), by comparing the case without antibiotics (Fig.~\ref{fig:sub1}) to the case with a 3-day antibiotic treatment (Fig.~\ref{fig:sub2}).  This administration slows the process by which the system reaches equilibrium. Fig.~\ref{fig:microbiotaDynBacterio} illustrates the bacteriotherapy, which consists of adding or removing certain species carried by the base vectors of the set $C_k$ defined in \eqref{bacthera}.  This intervention results in a structural change over time. The comparison between subfigures \ref{fig:sub3} and \ref{fig:sub4} highlights the importance of distinguishing a true species absence from an abundance that is merely close to zero. This distinction, carefully discussed in \cite{doliveira2026generalized}, depends on the underlying formalism. In the framework of Jones et al.~\cite{jones2020navigation}, species are represented by a fixed-size vector, where the absence of a species corresponds to an abundance exactly equal to zero. In contrast, in the framework proposed in this paper, where the representation has variable size, species absence is encoded by the absence of the corresponding basis vector itself. This difference helps mitigate the accumulation of numerical errors, since infinitesimal noise around a zero value can, in fixed-size models, spuriously trigger the evolution of a species that should be absent. This behavior is precisely what is observed when comparing subfigures \ref{fig:sub3} and \ref{fig:sub4}, depending on whether $J_-$ is deactivated or activated.

Fig.~\ref{fig:microbiotaDynBacteriowithWeightsDyn} illustrates the complete dynamics of the system state, including its attributes, weights, and topology.  Topology changes occur through bacteriotherapy, carried by the basis vectors of the set $C_k$ (see \eqref{bacthera}).  The addition of weight dynamics, based on a model adapted from Oja's rule \cite{dayan2005theoretical}, rapidly balances attribute values around a common level, even when there is bacteriotherapy, which contrasts with the observation in Fig.~\ref{fig:microbiotaDynBacterio}.

\begin{figure}[t]
	\centering
	
	\begin{subfigure}[b]{0.5\textwidth}
		\centering
		\includegraphics[width=0.99\textwidth]{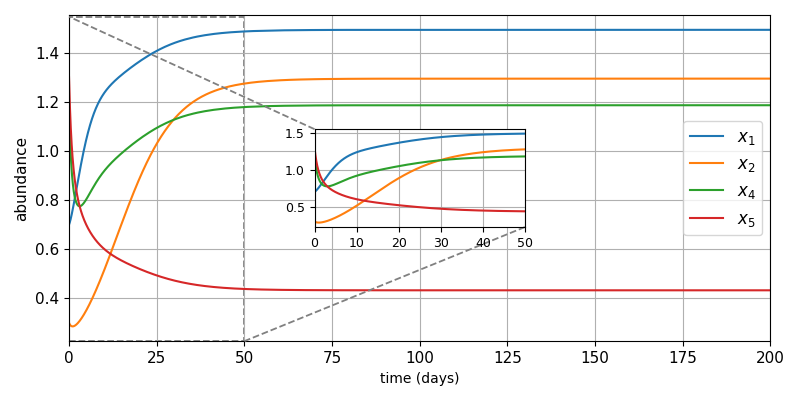}
		\caption{}
		\label{fig:sub1}
	\end{subfigure}
	
	
	\begin{subfigure}[b]{0.5\textwidth}
		\centering
		\includegraphics[width=0.99\textwidth]{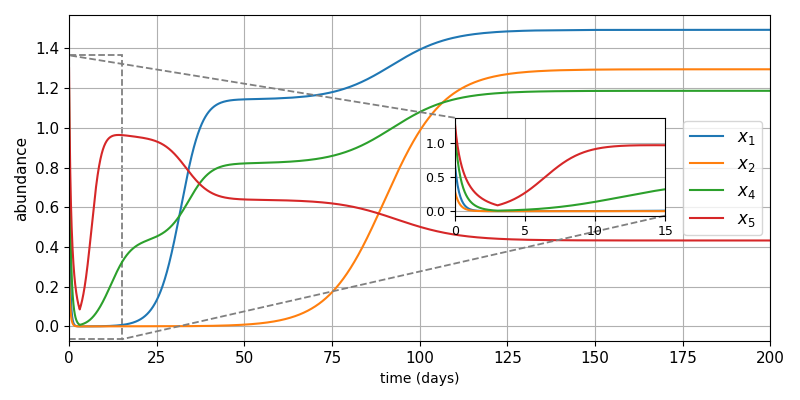}
		\caption{}
		\label{fig:sub2}
	\end{subfigure}
	
	
	\caption{Microbiota evolution simulated by the continuous model~\eqref{flowModel}, without weight dynamics and with a fixed topology: (a) no antibiotic treatment; (b) antibiotic treatment during the first three days.}
	\label{fig:microbiotaDyn}
\end{figure}

\begin{figure}[t]
	\centering
	
	\begin{subfigure}[b]{0.5\textwidth}
		\centering
		\includegraphics[width=0.85\textwidth]{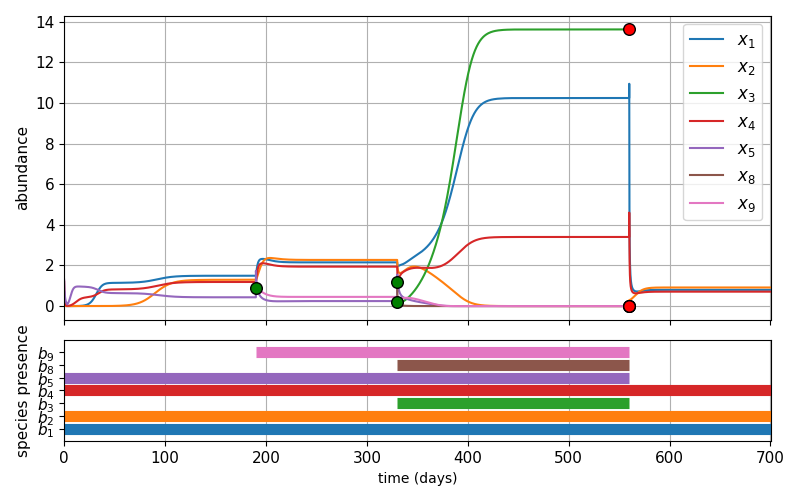}
		\caption{}
		\label{fig:sub3}
	\end{subfigure}
	
	
	\begin{subfigure}[b]{0.5\textwidth}
		\centering
		\includegraphics[width=0.85\textwidth]{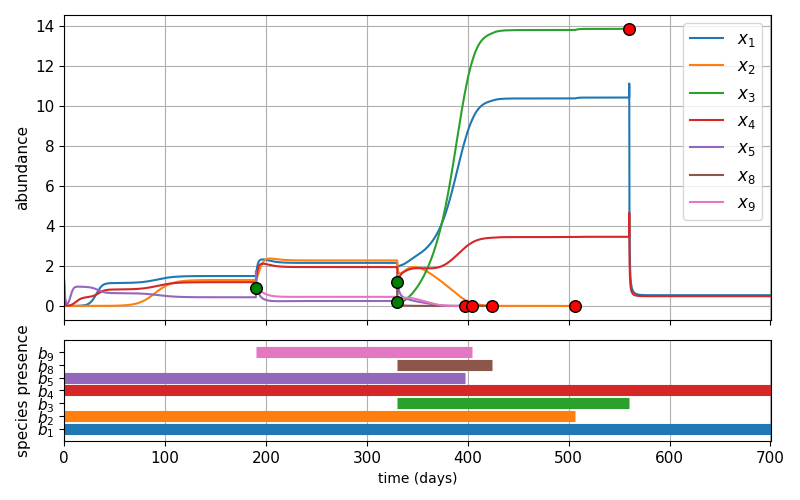}
		\caption{}
		\label{fig:sub4}
	\end{subfigure}
	
	
	\caption{Microbiota evolution under the hybrid model $\mathcal{H}$ (Eqs. \eqref{GenEquaDiff} and \eqref{flowModel}) during 3 days of antibiotics, followed by bacteriotherapy $U_{C_k}$ (Eq. \eqref{bacthera}), without weight dynamics and without $J_+$ activation (Eq. \eqref{jumpset}): (a) $J_-$ off, (b) $J_-$ on.}
	\label{fig:microbiotaDynBacterio}
\end{figure}

\begin{figure}[t]
	\centering
	
	\begin{subfigure}[b]{0.5\textwidth}
		\centering
		\includegraphics[width=0.85\textwidth]{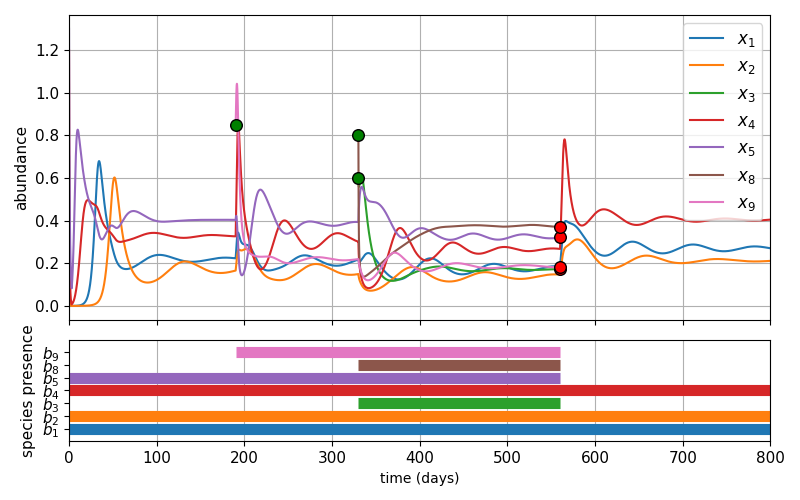}
		\caption{}
		\label{fig:sub5}
	\end{subfigure}
	
	
	\begin{subfigure}[b]{0.5\textwidth}
		\centering
		\includegraphics[width=0.85\textwidth]{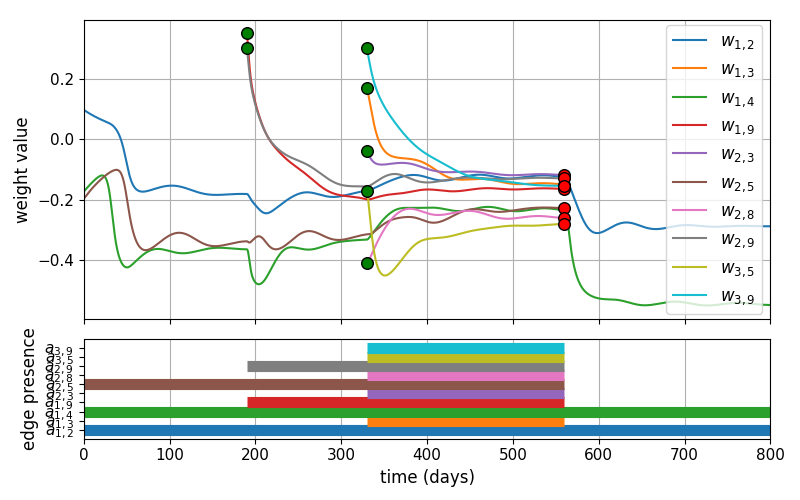}
		\caption{}
		\label{fig:sub6}
	\end{subfigure}
	
	
	\caption{Microbiota evolution under the hybrid model $\mathcal{H}$ (Eqs.~\eqref{GenEquaDiff} and~\eqref{flowModel}) during three days of antibiotic treatment, followed by bacteriotherapy $U_{C_k}$ (Eq.~\eqref{bacthera}): (a) attribute evolution; (b) weight evolution, with the evolution of selected edges and weights also displayed.}
	\label{fig:microbiotaDynBacteriowithWeightsDyn}
\end{figure}

\section{\label{Conc}Conclusion}
\noindent In this paper, we have developed a framework guaranteeing the existence of trajectories for a dynamical system evolving in the graph state-space, which exhibits both discrete and continuous dynamics. We introduced an algebraic structure to model discrete changes in the topology, node attributes, and edge weights, while continuous evolution is governed by differential equations acting on attributes and weights. This construction reveals, to the best of our knowledge, a new mathematical structure: semi-vector spaces over fields, extending the classical notion usually defined over semi-fields. The framework naturally fits within hybrid dynamical systems theory and, in a variable-basis setting, provides a rigorous foundation for proving existence of solutions. Its relevance is illustrated through simulations of gut microbiota dynamics, including bacteriotherapy and fecal microbiota transplantation (FMT), interpreted as control inputs. The results reproduce qualitative trends observed in the literature. Future work will focus on stability and controllability analysis, as well as stochastic extensions within both the graph state space and the variable-basis framework.

\appendix
\subsection{Proofs of the Propositions of Theorem~\ref{GraphSetStructureU+}}
\label{ProofPropTheoU+}

\begin{proof}(Proposition \ref{PropAddUnion})
The additive union inherits the commutative and associative properties of union and addition. Regarding the property of additive cancellation, a counterexample is sufficient to show that it does not apply. By taking \(X\) and \(Z\) as two distinct zero subgraphs, \(X^0\) and \(Z^0\), of \(Y\), we have \(X^0 \cup_+ Y = Z^0 \cup_+ Y \Rightarrow Y = Y\), while \(X^0 \neq Z^0\).
\end{proof}

\begin{proof}(Proposition \ref{AddUnionIdentity})
For any graph \( G \in \mathbb{G} \), a graph \( X \in \mathbb{G} \) is the identity in the structure \( (\mathbb{G}, \cup_+) \) if and only if it satisfies the following condition:

\begin{equation}
\label{DefElementNeutre}
G\cup_+ X = X \cup_+ G = G.
\end{equation}

The empty graph \(\emptyset_{\mathbb{G}}\) fully satisfies the condition in \eqref{DefElementNeutre}. Now, suppose that \(X \neq \emptyset_{\mathbb{G}}\). We will demonstrate that only the empty graph \(\emptyset_{\mathbb{G}}\) satisfies this condition. Let \(X = (V_X, E_X)\) and \(G = (V_G, E_G)\).

\begin{itemize}
  \item[--]  First, equation \eqref{DefElementNeutre} indicates that the cardinalities of \( V_G \) and \( E_G \) remain unchanged when applying \( X \). Specifically, this gives:  $\lvert \mathcal{I}_G \rvert + \lvert \mathcal{I}_X \rvert - \lvert \mathcal{I}_G \cap \mathcal{I}_X \rvert = \lvert \mathcal{I}_G \rvert$, which leads to \(\lvert \mathcal{I}_X \rvert = \lvert \mathcal{I}_G \cap \mathcal{I}_X \rvert\). Therefore, \( \mathcal{I}_X \subseteq \mathcal{I}_G \), and similarly, \( \mathcal{L}_X \subseteq \mathcal{L}_G\). This means that the vertices in \( V_X \) have the same labels as some of the vertices in \( V_G \), and the edges in \( E_X \) have the same pairs of labels as some of the edges in \( E_G \). However, this result does not provide any information about the values of attributes or weights.
  
  \item[--] Secondly, assuming that \( V_G = \{ (k, g_k) \}_{k \in \mathcal{I}_G} \) and \( V_X = \{ (k, x_k) \}_{k \in \mathcal{I}_X} \), the condition \eqref{DefElementNeutre} along with the result that \( \mathcal{I}_X \subseteq \mathcal{I}_G \) leads to:
   \begin{equation}
 \label{VVp}
    V_G=\left\{ (k,g_k+x_k) \mid k\in \mathcal{I}_X \right\} \cup \left\{ (k,g_k)\mid k\in \mathcal{I}_G\setminus \mathcal{I}_X \right\}.
 \end{equation}
For equation \eqref{VVp} to be valid, the first member must match the second member. This requires that for all \( k \in \mathcal{I}_X \), the condition \( g_k = g_k + x_k \) holds, leading to \( x_k = 0 \). Similarly, if we consider  \(E_G = \left\{ ((k,l), w_{p,q}) \mid (k,l) \in \mathcal{L}_G \right\}\)  and \(E_X = \left\{ ((k,l), v_{k,l}) \mid (k,l) \in \mathcal{L}_X \right\} \), it follows that \( v_{k,l} = 0 \). Consequently, the null subgraphs of \( G \) satisfy condition \eqref{DefElementNeutre}.

\item[--]In conclusion, based on the above, only the empty graph \( \emptyset_{\mathbb{G}} \) and the null sub-graphs of \( G \) satisfy condition \eqref{DefElementNeutre}. Let \( X^0\) represent a null sub-graph of \( G \). By applying the definition of condition 3 to the empty graph \( \emptyset_{\mathbb{G}} \) and the null graph \( X^0 \), the following results are obtained: $\emptyset_{\mathbb{G}}\cup_+ X^0= X^0 \cup_+ \emptyset_{\mathbb{G}} = X^0$. Thus, \(\emptyset_{\mathbb{G}}\) is the only identity of \((\mathbb{G}, \cup_+)\).
\end{itemize}
\end{proof}

\begin{proof}(Proposition \ref{Symetric+U})
A graph \( G \in \mathbb{G} \) is considered invertible if and only if there exists a graph \( X \in \mathbb{G} \) such that:

\begin{equation}
\label{UnionInverse}
G \cup_+ X = X \cup_+ G = \emptyset_{\mathbb{G}}.
\end{equation}
Let's assume that $G=(V_G,E_G)$  and $X = (V_X,E_X)$ with. For the set of vertices \(V_G = \{(i, g_i)\}_{i \in \mathcal{I}_G}\) and \(V_X = \{(i, x_i)\}_{i \in \mathcal{I}_X}\), relation \eqref{UnionInverse} would induce: 
\[\lvert \mathcal{I}_G \rvert + \lvert \mathcal{I}_X \rvert - \lvert \mathcal{I}_G \cap \mathcal{I}_X \rvert = 0,\]
 which implies:

\begin{equation}
\label{InverseProof}
  \lvert \mathcal{I}_G \rvert + \lvert \mathcal{I}_X \rvert = \lvert \mathcal{I}_G \cap \mathcal{I}_X \rvert.
\end{equation}

Since \( \mathcal{I}_G \cap \mathcal{I}_X \subseteq \mathcal{I}_G \) and \( \mathcal{I}_G \cap \mathcal{I}_X \subseteq \mathcal{I}_X \) are always true, we have:  $ \lvert \mathcal{I}_G \rvert \geq \lvert \mathcal{I}_G \cap \mathcal{I}_X \rvert$ and  $\lvert \mathcal{I}_X \rvert \geq \lvert \mathcal{I}_G \cap \mathcal{I}_X \rvert$, which leads to:

 \begin{equation}
   \label{Cas3Inverse}
    \lvert \mathcal{I}_G \rvert + \lvert \mathcal{I}_X \rvert \geq 2 \lvert \mathcal{I}_G \cap \mathcal{I}_X \rvert.
 \end{equation}
  The only situation in which both relations \eqref{InverseProof} and \eqref{Cas3Inverse} hold true is \( \mathcal{I}_G = \mathcal{I}_X = \emptyset \). By similar reasoning for the edge sets  \(E_G = \left\{ (k,l, w_{k,l}) \mid (k,l) \in \mathcal{L}_G \right\}\) and \(E_X = \left\{ (k,l, v_{k,l}) \mid (k,l) \in \mathcal{L}_X \right\}\), we conclude that \( \mathcal{L}_G = \mathcal{L}_X = \emptyset \). Since the graph \( G \) is indexed by \(\mathcal{I}_G\) and \(\mathcal{L}_G\), and \( X \) is indexed by \(\mathcal{I}_X\) and \(\mathcal{L}_X\), the only solution that satisfies \eqref{UnionInverse} is \( G = X = \emptyset_{\mathbb{G}} \).
\end{proof}
\subsection{Proofs of the Propositions of Theorem \ref{GraphSetStructuren+}}
\label{ProoFTheo2}

\begin{proof}(Proposition \ref{PropAddInter})
The additive intersection $\cap_+$ inherits the commutativity and associativity properties from both intersection and addition. The non-additive cancellation is verified simply by considering $X$ and $Z$ as two distinct graphs in $\mathbb{G}$ that contain the null graph $Y^0$.
\end{proof}

\begin{proof}(Proposition \ref{IdAddInter})
  Let \( G = (V_G, E_G) \in \mathbb{G} \) and \( X = (V_X, E_X) \in \mathbb{G} \). The graph \( X \) is the identity element in the structure \((\mathbb{G}, \cap_+)\) if and only if

  \begin{equation}
    \label{NeutreDefInterOver}
    G \cap_+ X = X \cap_+ G = G.
  \end{equation}

  Assume that \( V_G = \{(i, g_i)\}_{i \in \mathcal{I}_G} \) and \( V_X = \{(i, x_i)\}_{i \in \mathcal{I}_X} \). From \eqref{NeutreDefInterOver}, we deduce that  $ \lvert \mathcal{I}_G \rvert = \lvert \mathcal{I}_G \cap \mathcal{I}_X \rvert,$  which implies that $\mathcal{I}_G \subseteq \mathcal{I}_X$. Applying Definition~\ref{AddInterDef} and using \eqref{NeutreDefInterOver} along with \( \mathcal{I}_G \subseteq \mathcal{I}_X \), we obtain  
   \[
   \{(i, g_i + x_i) \mid i \in \mathcal{I}_G\} = \{(i, g_i) \mid i \in \mathcal{I}_G\},
   \]
   which leads to  $ x_i = 0 \quad \text{for all } i \in \mathcal{I}_G$. Similarly, for the edges, let $E_G = \{(k, l, w_{k,l}) \mid (k, l) \in \mathcal{L}_G\}$ and  $E_X = \{(k, l, v_{k,l}) \mid (k, l) \in \mathcal{L}_X$. From \eqref{NeutreDefInterOver}, it follows that $ \mathcal{L}_G \subseteq \mathcal{L}_X $ and $v_{k,l} = 0 \quad \text{for all } (k,l) \in \mathcal{L}_G$. Thus, the graph \( X \in \mathbb{G} \) that satisfies the constraints \( \mathcal{I}_G \subseteq \mathcal{I}_X \), \( \mathcal{L}_G \subseteq \mathcal{L}_X \), \( x_i = 0 \), and \( v_{k,l} = 0 \) for every graph \( G \in \mathbb{G} \) is none other than the total, complete, and zero graph \( \mathcal{G}_{t,c}^0 \).
\end{proof}

\begin{proof}(Proposition \ref{SymInter+})
A graph \( G \in \mathbb{G} \) is considered invertible if and only if there exists a graph \( X \in \mathbb{G} \) such that:

\begin{equation}
\label{InterInverse}
G \cap_+ X = X \cap_+ G = \mathcal{G}_{t,c}^0.
\end{equation}

Suppose \( G = (V_G, E_G) \) and \( X = (V_X, E_X) \) with vertex sets \( V_G = \{(i, g_i)\}_{i \in \mathcal{I}_G} \) and \( V_X = \{(i, x_i)\}_{i \in \mathcal{I}_X} \). The relation \eqref{InterInverse} implies: $\lvert \mathcal{I}_G \cap \mathcal{I}_X \rvert = \lvert \mathcal{I} \rvert$, where \( \mathcal{I} \) is the set of labels of \( \mathcal{G}_{t,c}^0 \). This result leads to two conclusions: (1) \( \mathcal{I}_G \geq \mathcal{I} \) and \( \mathcal{I}_X \geq \mathcal{I} \). Since \( \mathcal{I} \) is the maximal set, we conclude that \( \mathcal{I}_G = \mathcal{I}_X = \mathcal{I} \); and (2) From Definition \ref{AddInterDef} and the relation \eqref{InterInverse}, we have \( g_i = -x_i \). Thus, only total and complete graphs \( \mathcal{G}_{t,c} \) are invertible under the operation \( \cap_+ \).
\end{proof}

\section*{References}
\bibliography{tacref}
\bibliographystyle{IEEEtran}

\begin{IEEEbiography}[{\includegraphics[width=1in,height=1.25in,clip,keepaspectratio]{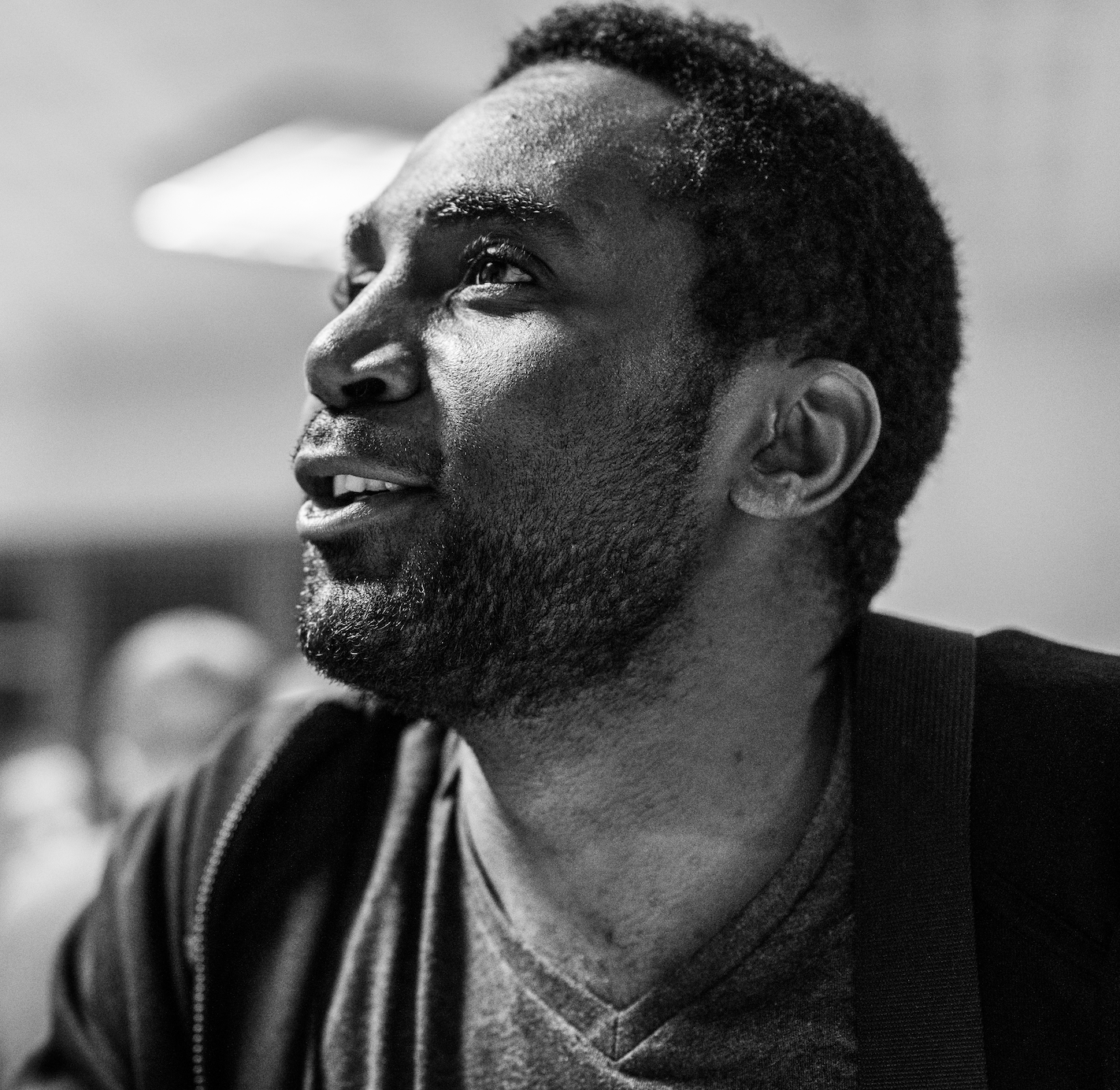}}]{Arthur Doliveira} 
	received the Master’s degree in Applied Mathematics from the University of Rennes, France, in 2020. He is currently pursuing a Ph.D. degree in Automatic Control at Aix-Marseille University, France, within the Laboratoire d’Informatique et des Systèmes (LIS), from 2022 to 2026. His research interests include modelling collective dynamics involving structural changes in complex systems.
\end{IEEEbiography}

\begin{IEEEbiography}[{\includegraphics[width=1in,height=1.25in,clip,keepaspectratio]{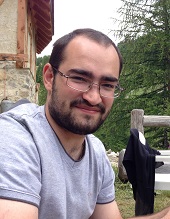}}]{Christophe Roman} 
	was born in Val-des-pr\`es, France, in 1991. He graduated in applied physics and electrical engineering from the Ecole Normale Sup\'erieure de Cachan, France in 2015. He received the Ph. D degree in Automatic from the University Grenoble Alpes in 2018. Since 2020, he has been an associate professor at Aix-Marseille University and a member of the Laboratory Infomatique and System (LIS). His main research interests are data-based diagnostic and EDP parameter identification and control. 
\end{IEEEbiography}

\begin{IEEEbiography}[{\includegraphics[width=1in,height=1.25in,clip,keepaspectratio]{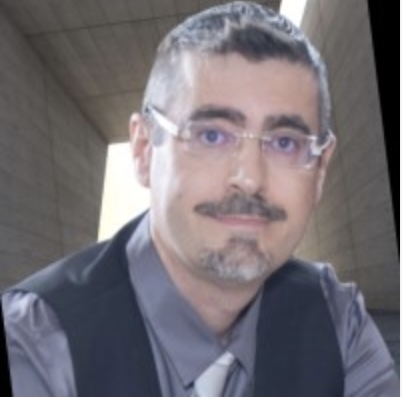}}]{Guillaume Graton} 
received the Ph. D degree in Automatic control from the University of Orléans in 2005. Since 2007, he is an associate professor at Centrale Méditerranée and a member of the LIS (Laboratoire d'Infomatique et des Systèmes). His research interests are diagnosis, prognosis, system modeling. He has many industrial collaborations and more than 60 papers published in international conferences and journals.
\end{IEEEbiography}

\begin{IEEEbiography}[{\includegraphics[width=1in,height=1.25in,clip,keepaspectratio]{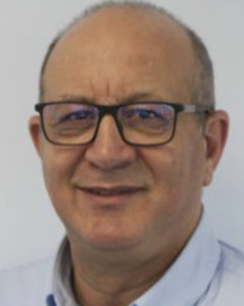}}]{Mustapha Ouladsine}  received the Ph.D. degree in automation from the University Henri Poincarré, Nancy, France, in 1993. He is currently a Full Professor with the University of Aix-Marseille (AMU), France. He is a Professor of data science, neural networks, and diagnostics with AMU. He has served as an Editor of some IEEE conferences. He is the author of several international publications and books on fault detection and isolation in dynamic systems. He has supervised more than 26 Ph.D.s. Since 2016, he has been interested in the use of artificial intelligence and, more specifically, in competitive learning by neural networks for diagnosis in health. His research focuses on the health monitoring of dynamic systems.
\end{IEEEbiography}

\end{document}